\newtheorem{theorem}{Theorem}[section]
\newtheorem{proposition}[theorem]{Proposition}
\newtheorem{corollary}[theorem]{Corollary}
\theoremstyle{definition}
\newtheorem{definition}[theorem]{Definition}
\theoremstyle{remark}
\newtheorem{remark}[theorem]{Remark}
\numberwithin{equation}{section}
\begin{document}
\title{On a flow of operators associated to virtual permutations}
\author[J. Najnudel]{Joseph Najnudel}
\address{Institut f\"ur Mathematik, Universit\"at Z\"urich, Winterthurerstrasse 190,
8057-Z\"urich, Switzerland}
\email{\href{mailto:joseph.najnudel@math.uzh.ch}{joseph.najnudel@math.uzh.ch}}
\author[A. Nikeghbali]{Ashkan Nikeghbali}
\email{\href{mailto:ashkan.nikeghbali@math.uzh.ch}{ashkan.nikeghbali@math.uzh.ch}}
\date{\today}
\begin{abstract}
In \cite{KOV}, Kerov, Olshanski and Vershik introduce the so-called virtual permutations, 
defined as families of permutations $(\sigma_N)_{N \geq 1}$, $\sigma_N$ in the symmetric group of order
$N$, such that the cycle structure of $\sigma_N$ can be deduced from the structure of $\sigma_{N+1}$ 
simply by removing the element $N+1$. The virtual permutations, and in particular 
the probability measures on the corresponding space which are invariant by conjugation, have been studied in a 
more detailed way by Tsilevich in \cite{Tsi} and \cite{Tsi98}. In the present article, we prove that for a large class of such
 invariant measures (containing in particular the Ewens measure of any parameter $\theta \geq 0$), 
it is possible to associate a flow $(T^{\alpha})_{\alpha \in \mathbb{R}}$
of random operators on a suitable functional space. Moreover, if $(\sigma_N)_{N \geq 1}$ is a random virtual permutation following a
distribution in the class described above,  
the operator $T^{\alpha}$ can be interpreted as the limit, in a sense which
has to be made precise, of the permutation $\sigma_N^{\alpha_N}$, where $N$ goes to infinity and 
$\alpha_N$ is equivalent to $\alpha N$. In relation with this interpretation, we prove that the 
eigenvalues of the infinitesimal generator of $(T^{\alpha})_{\alpha \in \mathbb{R}}$ 
are equal to the limit of the rescaled eigenangles of the permutation matrix 
associated to $\sigma_N$. 
\end{abstract}
\maketitle
\section{Introduction}
A large part of the research in random matrix theory comes from the problem of finding
the possible limit distributions of the eigenvalues of a given ensemble of random matrices, when the
 dimension goes to infinity. Many different ensembles have been studied (see, for example, Mehta \cite{Meh}), the most classical one
is the so-called {\it Gaussian Unitary Ensemble} (GUE), where the corresponding random matrix is hermitian, the 
diagonal entries are standard gaussian variables and the real and the imaginary part of the entries above 
the diagonal are centered gaussian variables of variance $1/2$, all these variables being independent.
Another well-known ensemble is the {\it Circular Unitary Ensemble} (CUE), corresponding to a random 
matrix which follows the Haar measure on a finite-dimensional unitary group. A remarkable
phenomenon which happens is the so-called {\it universality property}: there are some particular
point processes which appear as the limit (after suitable scaling) of the distribution of 
the eigenvalues (or eigenangles) associated with a large class of random matrix ensembles, this
limit being independent of the detail of the model which is considered. 
 For example, after scaling, the small eigenvalues of the GUE and the small eigenangles
of the CUE converge to the same process, called {\it determinantal process
with sine kernel}, and appearing  as the limit of a number of other models of random matrices. This
 process is a point process on the real line, such that informally, for $x_1, \dots, x_n \in \mathbb{R}$,
the probability to have a point in the neighborhood of $x_j$ for all $j \in \{1, \dots, k\}$
is proportional to the determinant of the matrix $( K(x_j,x_k))_{1 \leq 
j, k \leq n}$, where the kernel $K$ is given by the formula:
$$ K(x,y) = \frac{\sin (\pi (x-y))}{\pi (x-y)}.$$
Another point process which enjoys some universality properties is the {\it determinantal process
with Airy kernel}, which is involved in the distribution of the largest eigenvalues 
of the GUE, and which is defined similarly as the determinantal process
with sine kernel, except that the kernel $K$ is now given by:
$$ K(x,y) = \frac{ \operatorname{Ai} (x) \operatorname{Ai}'(y) -
\operatorname{Ai} (y) \operatorname{Ai}'(x)}{x-y},$$
where $\operatorname{Ai}$ denotes the Airy function.
 The phenomenon of universality is not well-explained in its full generality: a possible
way the have a good understanding of the corresponding point processes is to express them as the set
 of eigenvalues of some universal infinite-dimensional
 random operators, and to prove that these operators are the limits, in a sense
 which needs to be made precise, of the classical random matrix ensembles.
Such a construction has been done by Ramirez, Rider and Vir\'ag \cite{RVV},
where, as a particular case, the authors express a determinantal process with Airy kernel
as the set of the eigenvalues of a random differential operator on a functional space. 
Moreover, this infinite-dimensional operator is naturally interpreted as the limit of an ensemble of tridiagonal matrices, 
which have the same eigenvalue distribution as the GUE. However, 
this operator is not directly constructed from GUE (or another classical model as CUE), and the reason
of its universal properties are not obvious. On the other hand, an operator whose eigenvalues 
form a determinantal process with sine kernel, and which is related in a natural way to a classical random matrix model,
 has not been yet defined: a construction 
which seems to be very promising involves the interpretation of the sine kernel process
 as a function of a stochastic process called the Brownian carousel
and constructed by Valk\'o and Vir\'ag \cite{VV}. Now, despite the fact that it seems to be particularly difficult to 
associate, in a natural way, an infinite-dimensional operator to the most classical matrix ensembles, we shall prove, in the present 
paper, that such a construction is possible and very explicit for a large class of ensembles of permutations matrices. 
These ensembles, and some of their generalizations, have already be studied by several authors (see, for example, Evans \cite{Ev} and Wieand \cite{Wie}), including 
the authors of the present paper (see \cite{NNp}). An important advantage of permutations matrices is the fact that 
their eigenvalues can directly be expressed in function of the size of the cycles of the corresponding permutations. Hence, it is equivalent 
to study these matrices or to deal with the corresponding cycle structure. Another advantage is the existence of
a quite convenient way to define models of permutation matrices in all the
different dimensions, on the same probability space, which gives a meaning to the notion of almost sure convergence when the dimension goes 
to infinity (a rather unusual situation in random matrix theory). This also gives the possibility to define an infinite-dimensional limit model, which will 
be explicitly constructed in this paper. Once the permutations and their matrices are identified, the main objects involved in 
our construction are the so-called {\it virtual permutations}, first introduced by Kerov, Olshanski and Vershik in \cite{KOV},
and further  studied  by Tsilevich in \cite{Tsi} and \cite{Tsi98}. A virtual permutation can be defined as 
follows: it is a sequence $(\sigma_N)_{N \geq 1}$ of permutations, $\sigma_N$ being of order $N$, 
and such that the cycle structure of $\sigma_N$ is obtained from the cycle structure of $\sigma_{N+1}$ 
by simply  removing the element $N+1$. Then, for all $\theta \geq 0$, it is possible to define a unique
probability measure on the space of virtual permutations, such that for all $N \geq 0$, its image 
by the $N$-th coordinate is equal to the Ewens measure on the symmetric group of order $N$, with 
parameter $\theta$. The Ewens measures on the space of virtual permutations are particular cases of the so-called {\it central measures}, studied in \cite{Tsi98}, 
and defined as
the probability measures which are invariant by conjugation with any permutation of finite order (see below for the details).
The central measures are completely characterized in \cite{Tsi98}, using the properties of exchangeable partitions, stated
by Kingman in \cite{King75}, \cite{King78a} and \cite{King78b} (see also the course by Pitman \cite{Pit}). 
The following remarkable statement holds: if the distribution of a random virtual permutation $(\sigma_N)_{N \geq 1}$ is 
a central measure, and if for all $N \geq 1$, $(l^{(N)}_k)_{k \geq 1}$ denotes the decreasing sequence of cycle 
lengths (completed by zeros) of the permutation $\sigma_N$,
then for all $k \geq 1$, 
the sequence $(l^{(N)}_k/N)_{N \geq 1}$ converges almost surely to a limit random variable 
$\lambda_k$. By using Fatou's lemma, one immediately deduces that almost surely, the random sequence $(\lambda_k)_{k \geq 1}$ is 
nonnegative and decreasing, with:
$$\sum_{k \geq 1} \lambda_k \leq 1.$$
If the distribution of $(\sigma_N)_{N \geq 1}$ is the Ewens measure of parameter $\theta \geq 0$, then
$(\lambda_k)_{k \geq 1}$ is a Poisson-Dirichlet process with parameter $\theta$ (if $\theta = 0$, then $\lambda_1 = 1$ and 
$\lambda_k = 0$ for $k \geq 1$). 
This convergence of the renormalized cycle lengths can be translated into a statement on random matrices. 
Indeed, to any virtual permutation following a central measure, one can associate a sequence 
$(M_N)_{N \geq 1}$ of random permutation
matrices, $M_N$ being of dimension $N$. If for $N \geq 1$, $X_N$ denotes 
the point process of the eigenangles of $M_N$, multiplied by $N$ and counted with multiplicity,
then $X_N$ converges almost surely to the limit process $X_{\infty}$ defined as follows: 
\begin{itemize}
\item $X_{\infty}$ contains, for all $k \geq 1$ such that $\lambda_k > 0$, each non-zero multiple of $2\pi/ \lambda_k$.
\item The multiplicity of any non-zero point $x$ of $X_{\infty}$ is equal to the number of values of $k$ such that $\lambda_k > 0$ and 
$x$ is multiple of $2 \pi/\lambda_k$.
\item The multiplicity of zero is equal to the number of values of $k \geq 1$ such that $\lambda_k > 0$ if 
$\sum_{k \geq 1} \lambda_k = 1$, and to infinity if $\sum_{k \geq 1} \lambda_k < 1$. 
\end{itemize} 
This convergence has to be understood in the following way: for all functions $f$ from $\mathbb{R}$ to 
$\mathbb{R}_+$, continuous with compact support, the sum of $f$ at the points of $X_N$, counted with 
multiplicity, tends to the corresponding sum for $X_{\infty}$ when $N$ goes to infinity. 
In the case of the Ewens probability measure with parameter $\theta > 0$, a detailed proof of this result is given by the present authors
in \cite{NNp}, in
 a more general context. Once the almost sure convergence of the rescaled
eigenangles is established, one can naturally ask the following question: is it possible to express
the limit point process as the spectrum of a random operator associated to the virtual permutation
which is considered? The main goal of this article is to show that the answer is positive. More precisely,
we prove that in a sense which can be made precise, for almost every virtual permutation following a central probability 
distribution, for all $\alpha \in \mathbb{R}$, and 
for all sequences $(\alpha_N)_{N \geq 1}$ such that $\alpha_N$ is equivalent to $\alpha N$ for
$N$ going to infinity, $\sigma_N^{\alpha_N}$ converges to an operator $T^{\alpha}$, depending 
 on $(\sigma_N)_{N \geq 0}$, on $\alpha$ but not on the choice of $(\alpha_N)_{N \geq 1}$. 
The flow of operators $(T^{\alpha})_{\alpha \in \mathbb{R}}$ (which almost surely satisfies 
$T^{\alpha + \beta} = T^{\alpha} T^{\beta}$ for all $\alpha, \beta \in \mathbb{R}$) is defined 
on a functional space which will be constructed later, and admits almost surely an infinitesimal 
generator $U$. Moreover, we prove that the spectrum of $iU$ is exactly given by the limit point 
process $X_{\infty}$ constructed above. This spectral interpretation of such a limit point 
process suggests that it is perhaps possible to construct similar objects for more classical matrix models, despite the discussion
above about the 
difficulty of this problem. Indeed, in \cite{OV}, Olshanski and 
Vershik characterize the measures on the space of infinite dimensional hermitian matrices, which are invariant by conjugation with finite-dimensional 
unitary matrices. These central measures enjoy the following property: if a random infinite matrix follows one of them, then 
after suitable scaling, the point process of the extreme eigenvalues of its upper-left finite-dimensional 
submatrice converges almost surely to 
a limit point process when the dimension of this submatrice tends to infinity, similarly as for the case of virtual permutations, where 
$X_N$ tends to $X_{\infty}$.
By the Cayley transform, the sequence of the upper-left submatrices of an infinite hermitian matrix is mapped
to a sequence $(M_N)_{N \geq 1}$ of unitary matrices, such that the matrix $M_N$ can be deduces from $M_{N+1}$ by 
an  projective map, explicitly described by Neretin \cite{Ner}. Among the central measures described in \cite{OV}, 
there exists a unique measure for which all its projections on the finite-dimensional unitary groups are equal to Haar measure. 
In \cite{BO} Borodin and Olshanski study the hermitian version of this measure, and a family of generalizations, depending on a complex parameter 
and called Hua-Pickrell measures. In particular, they prove that the corresponding point process is determantal and 
compute explicitly its kernel: for the Haar measure, they obtain the image of a sine kernel process by the map 
$x \mapsto 1/x$. Moreover, in a forthcoming paper with Bourgade, by using the splitting of unitary matrices as a product 
of reflections, we define the natural generalization 
of virtual permutations to sequences of general unitary matrices of increasing dimensions. These sequences are different
from the sequences of unitary matrices defined by Neretin, Olshanski and Vershik, in spite  of the similarity between 
the two constructions. In our setting, there exists also a projective limit of the Haar measure, and under this measure, we prove the 
almost sure convergence of the rescaled eigenangles when the dimension goes to infinity, the limit process being a sine-kernel process. 
  It is also possible to construct a family of measures
which has the same finite-dimensional projections as the Hua-Pickrell measures, and to study the almost sure convergence of their rescaled eigenangles. 
However, contrarily to the case of virtual permutations discussed in the present paper, we are not yet able to interpret the corresponding limit 
point process as the sequence of eigenvalues of an operator. 

The present article is organized as follows: in Section \ref{s1}, we define the notion of a virtual 
permutation in a more general setting than it is usually done; in Section 
\ref{s2}, we study the central measures in this new setting, generalizing the results by Kerov, Olshanski, 
Tsilevich and Vershik; in Section \ref{s3}, we use the results of Section \ref{s2} in order to associate, to almost every virtual permutation under a central 
measure, a flow of 
transformations of a topological space obtained by completing the set on which the virtual permutations 
act;  in Section \ref{s4}, we interpret this flow as a flow of operators on a functional space, and
we deduce the construction of the random operator $U$. 
\section{Virtual permutations of general sets} \label{s1}
The virtual permutations are usually defined as the sequences $(\sigma_N)_{N \geq 1}$
such that $\sigma_N \in \Sigma_N$ for all $N \geq 1$, where $\Sigma_N$ is the 
symmetric group of order $N$, and the cycle structure of $\sigma_N$ is obtained by removing $N+1$ from 
the cycle structure of $\sigma_{N+1}$. In this definition, the order of the integers is involved in an 
important way, which is not very satisfactory since we are essentially interested in the cycle structure 
of the permutation, and not particularly in the nature of the elements inside the cycles. Therefore, in this
 section, we present a notion of virtual permutation which can be applied to any set and not only 
to the set of positive integers. This generalization is possible because of  following result:
\begin{proposition} \label{p1}
Let $(\sigma_N)_{N \geq 1}$ be a virtual permutation (in the usual sense). For all finite subsets
$I \subset \mathbb{N}^*$, and for all $N \geq 1$ larger than any of the elements in $I$, 
let $\sigma^{(N)}_I$ be the permutation of the elements of $I$ obtained by removing the
elements outside $I$ from the cycle structure of $\sigma_N$. Then $\sigma^{(N)}_I$ depends only on
$I$ and not on the choice of $N$ majorizing $I$, and one can write $\sigma^{(N)}_I =: \sigma_I$.
Moreover, if $J$ is a finite subset of $\mathbb{N}^*$ containing $I$, then $\sigma_I$ can be obtained
from the cycle structure of $\sigma_J$ by removing all the elements of $J \backslash I$. 
\end{proposition}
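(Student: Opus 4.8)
The plan is to reduce the whole statement to a single order-independence property of the ``cycle deletion'' operation. For a permutation $\tau$ of a finite set $S$ and a point $x \in S$, let $\tau \setminus x$ denote the permutation of $S \setminus \{x\}$ obtained by deleting $x$ from the cycle structure of $\tau$; explicitly, $(\tau \setminus x)(y) = \tau(y)$ when $\tau(y) \neq x$ and $(\tau \setminus x)(y) = \tau(x)$ when $\tau(y) = x$. More generally, for a subset $A \subseteq S$ and $y \in S \setminus A$, I set $(\tau \setminus A)(y) := \tau^{m}(y)$, where $m \geq 1$ is minimal with $\tau^{m}(y) \notin A$; this is well defined because the $\tau$-orbit of $y$ is finite and contains $y \notin A$, and one checks readily that $\tau \setminus A$ is a permutation of $S \setminus A$.

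The key lemma I would establish is: for \emph{disjoint} subsets $A, B \subseteq S$, one has $(\tau \setminus A) \setminus B = \tau \setminus (A \cup B)$; in particular, deleting the points of a finite set one at a time yields a result independent of the order of deletion and equal to $\tau \setminus A$. This is proved by induction on $|B|$, the case $|B| = 1$ being the crux: writing $\rho = \tau \setminus A$, one checks that $\rho^{m}(y)$ is exactly the $m$-th term of the sequence $\tau(y), \tau^{2}(y), \dots$ that lies outside $A$, so that skipping over the single extra point $x$ after applying $\rho$ lands precisely on the first $\tau$-iterate of $y$ outside $A \cup \{x\}$. I expect this verification to be the only real obstacle, and even there the difficulty is merely in being careful with the degenerate cases (the deleted point lying in a transposition of $\tau$, or being a fixed point of $\tau$), which is exactly what the uniform formula $(\tau \setminus A)(y) = \tau^{m}(y)$ is designed to absorb.

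Granting the lemma, the first assertion is immediate. From the definition of a virtual permutation, $\sigma_{N} = \sigma_{N+1} \setminus (N+1)$, so deleting $M, M-1, \dots, N+1$ in turn and applying the lemma gives $\sigma_{N} = \sigma_{M} \setminus \{N+1, \dots, M\}$ for every $M > N$. On the other hand $\sigma^{(N)}_{I}$ is, by construction, obtained from $\sigma_{N}$ by deleting the points of $\{1, \dots, N\} \setminus I$ one at a time, which the lemma identifies with $\sigma_{N} \setminus (\{1, \dots, N\} \setminus I)$. Hence, for $N < M$ both majorizing $I$,
$$\sigma^{(N)}_{I} = \big(\sigma_{M} \setminus \{N+1, \dots, M\}\big) \setminus \big(\{1, \dots, N\} \setminus I\big) = \sigma_{M} \setminus \big(\{1, \dots, M\} \setminus I\big) = \sigma^{(M)}_{I},$$
where the middle equality is the lemma applied to the disjoint union $\{N+1, \dots, M\} \sqcup (\{1, \dots, N\} \setminus I) = \{1, \dots, M\} \setminus I$. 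Thus $\sigma^{(N)}_{I}$ does not depend on the majorant $N$, and we write it $\sigma_{I}$.

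For the last assertion, let $I \subseteq J$ be finite subsets of $\mathbb{N}^{*}$ and pick $N$ majorizing $J$. Using the disjoint decomposition $\{1, \dots, N\} \setminus I = (\{1, \dots, N\} \setminus J) \sqcup (J \setminus I)$ and the lemma once more,
$$\sigma_{I} = \sigma_{N} \setminus \big(\{1, \dots, N\} \setminus I\big) = \big(\sigma_{N} \setminus (\{1, \dots, N\} \setminus J)\big) \setminus (J \setminus I) = \sigma_{J} \setminus (J \setminus I),$$
which says precisely that $\sigma_{I}$ is obtained from the cycle structure of $\sigma_{J}$ by deleting the points of $J \setminus I$. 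This completes the plan; apart from the key lemma, every step is a manipulation of these deletion identities.
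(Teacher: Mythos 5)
Your proof is correct and takes essentially the same route as the paper: both arguments reduce everything to the fact that deleting points from the cycle structure can be done in stages, $(\tau\setminus A)\setminus B=\tau\setminus(A\cup B)$ for disjoint $A,B$, and then carry out the same bookkeeping with the sets $\{N+1,\dots,N'\}$, $\{1,\dots,N\}\setminus I$ and $J\setminus I$. The only difference is that the paper treats this composition property of removals as self-evident, while you isolate it as a lemma with the explicit formula $(\tau\setminus A)(y)=\tau^{m}(y)$ and sketch a correct proof of it, which is a harmless extra degree of rigour.
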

\begin{proof}
Let $N \leq N'$ be two integers majorizing $I$. By the classical definition of virtual permutations,
$\sigma_N$ is obtained by removing the elements strictly larger than $N$ from the cycle 
structure of $\sigma_{N'}$. Hence, $\sigma^{(N)}_I$ can be obtained from the cycle structure
of $\sigma_{N'}$ by removing the elements strictly larger than $N$, and then the elements smaller than 
or equal to $N$ which are not in $I$. Since all the elements of $I$ are smaller than or equal to $N$, 
it is equivalent to remove directly all the elements of $\{1,\dots,N'\}$ outside $I$, which proves 
that $\sigma^{(N')}_I = \sigma^{(N)}_I$. Now, let $J$ be a finite subset of $\mathbb{N}^*$ containing $I$,
and $N''$ an integer which majorizes $J$. The permutation $\sigma_I$ is obtained by removing
the elements of $\{1,\dots,N''\} \backslash I$ from the cycle structure of $\sigma_{N''}$. 
It is equivalent to say that $\sigma_I$ is obtained from $\sigma_{N''}$ by  removing
 the elements of $\{1,\dots,N''\} \backslash J$, and then the elements of $J \backslash I$, which 
implies that $\sigma_I$ can be obtained by removing the elements of $J \backslash I$ from the
 cycle structure of $\sigma_J$. 
\end{proof}
\noindent
We can then define virtual permutations on general sets as follows:
\begin{definition} \label{d1}
 A virtual permutation of a given set $E$ is a family of permutations $(\sigma_I)_{I \in 
\mathcal{F} (E)}$, indexed by the set $\mathcal{F}(E)$ of the finite subsets of $E$, such that 
$\sigma_I \in \Sigma_I$, where $\Sigma_I$ is the symmetric group of $I$, and such that for 
all $I, J \in \mathcal{F}(E)$, $I \subset J$, the permutation $\sigma_I$ is obtained by removing 
the elements of $J \backslash I$ from the cycle structure of $\sigma_J$.
\end{definition}
\noindent
If $E$ is a finite set, a virtual permutation is essentially a permutation, more precisely, one 
has the following proposition:
\begin{proposition}  \label{viper}
Let $E$ be a finite set and $\sigma$ a permutation of $E$. Then, one can define a
virtual permutation $(\sigma_I)_{I \in \mathcal{F}(E)}$ of $E$ as follows: $\sigma_I$ is obtained 
from $\sigma$ by removing the elements of $E \backslash I$ from its cycle structure. Moreover, 
this mapping is bijective, and the inverse mapping is obtained by associating the permutation 
$\sigma_E$ to any virtual permutation $(\sigma_I)_{I \in \mathcal{F}(E)}$.
\end{proposition}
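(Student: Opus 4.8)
The plan is to show two things: first, that the map $\sigma \mapsto (\sigma_I)_{I \in \mathcal{F}(E)}$ is well-defined, i.e. that the family it produces really is a virtual permutation in the sense of Definition \ref{d1}; and second, that the map $(\sigma_I)_{I} \mapsto \sigma_E$ is a two-sided inverse. Since $E$ is finite, $E \in \mathcal{F}(E)$, so all of this is elementary bookkeeping about the operation of ``removing elements from a cycle structure'', and the only real care needed is to record that this operation is associative in the appropriate sense.

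First I would isolate the basic compatibility fact. Write $r_{J,I}$ for the operation that takes a permutation of $J$ and returns the permutation of $I$ obtained by deleting the elements of $J \setminus I$ from its cycle notation (for $I \subset J$ finite). The key observation is that for $I \subset J \subset K$ one has $r_{J,I} \circ r_{K,J} = r_{K,I}$: deleting the elements of $K \setminus J$ and then those of $J \setminus I$ is the same as deleting all of $K \setminus I$ at once, because removing an element from a cycle just means ``splice its two neighbours together'', and the order in which one removes a set of elements does not affect the result. This is exactly the computation already carried out in the proof of Proposition \ref{p1}, so I would simply invoke it (or re-use that one-line argument). With this in hand, given a permutation $\sigma$ of $E$ and setting $\sigma_I := r_{E,I}(\sigma)$, for $I \subset J$ we get $r_{J,I}(\sigma_J) = r_{J,I}(r_{E,J}(\sigma)) = r_{E,I}(\sigma) = \sigma_I$, which is precisely the defining property of a virtual permutation. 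Hence the map $\Phi : \sigma \mapsto (\sigma_I)_I$ lands in the set of virtual permutations of $E$.

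Next I would check bijectivity by exhibiting $\Psi : (\sigma_I)_I \mapsto \sigma_E$ as the inverse. For $\Psi \circ \Phi = \mathrm{id}$: since $E \in \mathcal{F}(E)$ and $r_{E,E}$ is the identity operation, $\Phi(\sigma)$ has $E$-component $\sigma_E = r_{E,E}(\sigma) = \sigma$. For $\Phi \circ \Psi = \mathrm{id}$: given an arbitrary virtual permutation $(\sigma_I)_{I \in \mathcal{F}(E)}$, its defining property applied to the pair $I \subset E$ says exactly that $\sigma_I = r_{E,I}(\sigma_E)$; but $r_{E,I}(\sigma_E)$ is by definition the $I$-component of $\Phi(\sigma_E) = \Phi(\Psi((\sigma_I)_I))$, so the two families agree componentwise. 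This proves $\Phi$ is a bijection with inverse $\Psi$.

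There is essentially no serious obstacle here; the one point that deserves explicit mention rather than being swept under the rug is the well-definedness and associativity of the removal operation on cycle structures (that removing a subset of elements one at a time, in any order, yields the same permutation of the remaining elements). I would state this cleanly once — it follows because deleting a single element $x$ from a cycle $(\dots\, a\, x\, b\, \dots)$ produces $(\dots\, a\, b\, \dots)$, an operation on the remaining elements that manifestly commutes with deletion of any other element — and then the rest of the proof is the formal manipulation above. I would not belabour the verification that each $\sigma_I$ is genuinely a permutation of $I$ (it is, since removal never identifies or loses elements of $I$), nor that $\mathcal{F}(E)$ is finite; these are immediate from $E$ being finite.
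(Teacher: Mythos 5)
Your proof is correct, and it fills in exactly the verification the paper leaves out (the authors declare the proposition ``almost trivial'' and omit the proof entirely). Your argument — noting that the removal maps $r_{J,I}$ satisfy $r_{J,I}\circ r_{K,J}=r_{K,I}$, as in the proof of Proposition \ref{p1}, and then checking that $\sigma\mapsto(r_{E,I}(\sigma))_I$ and $(\sigma_I)_I\mapsto\sigma_E$ are mutually inverse — is precisely the routine bookkeeping the authors had in mind, with the one genuinely substantive point (order-independence of element removal from cycle structures) correctly identified and stated.
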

\noindent
This proposition is almost trivial, so we omit the proof. 

\noindent\textbf{Notation} Because of this result, we will 
denote the set of virtual permutations of any set $E$ by $\Sigma_E$. In
 the case $E = \mathbb{N}^*$, we can immediately deduce the following result from Proposition \ref{p1}: 
\begin{proposition}
If $(\sigma_N)_{N \geq 1}$ is a virtual permutation in the classical sense, then with the notation
of Proposition \ref{p1}, $(\sigma_I)_{I \in \mathcal{F}(\mathbb{N}^*)}$ is a virtual permutation 
of $\mathbb{N}^*$ in the sense of Definition \ref{d1}. Moreover, the mapping which associates
$(\sigma_I)_{I \in \mathcal{F}(\mathbb{N}^*)}$ to $(\sigma_N)_{N \geq 1}$ is bijective:
the existence of the inverse mapping is deduced from the fact that
 $\sigma_N = \sigma_{\{1, \dots, N \}}$. 
\end{proposition}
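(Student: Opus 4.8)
The plan is to read off the first assertion directly from Proposition \ref{p1}, and then to prove the bijectivity by exhibiting the inverse map explicitly and checking the two composites are the identity. For the first part, recall that $\sigma_I$ is by construction the permutation $\sigma^{(N)}_I \in \Sigma_I$ for any $N$ majorizing $I$; Proposition \ref{p1} already guarantees that this does not depend on the chosen $N$ and that, whenever $I \subset J$ are finite subsets of $\mathbb{N}^*$, the permutation $\sigma_I$ is obtained from the cycle structure of $\sigma_J$ by removing the elements of $J \setminus I$. That is exactly the compatibility condition of Definition \ref{d1}, so $(\sigma_I)_{I \in \mathcal{F}(\mathbb{N}^*)}$ is a virtual permutation of $\mathbb{N}^*$. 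Write $\Phi$ for the map $(\sigma_N)_{N \geq 1} \mapsto (\sigma_I)_{I \in \mathcal{F}(\mathbb{N}^*)}$ thus defined.

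Next I would establish injectivity together with the formula for the inverse. Taking $I = \{1, \dots, N\}$ and reconstructing $\sigma_I$ from $\sigma_M$ for some $M \geq N$: one removes the elements of $\{1,\dots,M\} \setminus \{1,\dots,N\}$ from the cycle structure of $\sigma_M$, and by iterating the classical defining property of $(\sigma_N)_{N\geq 1}$ this is precisely $\sigma_N$. Hence $\sigma_{\{1,\dots,N\}} = \sigma_N$ for all $N$. Consequently, if $\Phi\big((\sigma_N)_{N \geq 1}\big) = \Phi\big((\tau_N)_{N \geq 1}\big)$, then comparing the $\{1,\dots,N\}$-components yields $\sigma_N = \tau_N$ for every $N$; so $\Phi$ is injective, and the only candidate inverse is the map $\Psi \colon (\sigma_I)_{I \in \mathcal{F}(\mathbb{N}^*)} \mapsto (\sigma_{\{1,\dots,N\}})_{N \geq 1}$.

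Finally, for surjectivity (equivalently $\Phi \circ \Psi = \mathrm{id}$), start from an arbitrary virtual permutation $(\sigma_I)_{I \in \mathcal{F}(\mathbb{N}^*)}$ of $\mathbb{N}^*$ and set $\sigma_N := \sigma_{\{1,\dots,N\}}$. Applying the compatibility condition of Definition \ref{d1} to $\{1,\dots,N\} \subset \{1,\dots,N+1\}$ shows that $\sigma_N$ is obtained from $\sigma_{N+1}$ by removing $N+1$ from its cycle structure, so $(\sigma_N)_{N \geq 1}$ is a classical virtual permutation. To see that $\Phi\big((\sigma_N)_{N \geq 1}\big)$ recovers the original family, fix a finite $I$ and an $N$ majorizing it: the $I$-component of $\Phi\big((\sigma_N)_{N \geq 1}\big)$ is obtained from $\sigma_N = \sigma_{\{1,\dots,N\}}$ by deleting the elements of $\{1,\dots,N\} \setminus I$, which by Definition \ref{d1} applied to $I \subset \{1,\dots,N\}$ is exactly $\sigma_I$. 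Thus $\Phi$ and $\Psi$ are mutually inverse.

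The argument is essentially bookkeeping once Proposition \ref{p1} is available; the only point deserving a moment's care is the identity $\sigma_{\{1,\dots,N\}} = \sigma_N$, i.e. noticing that in the reconstruction one may take the majorizing integer to be $N$ (or absorb the extra integers one at a time via the classical property) so that effectively nothing is removed. Beyond that, everything rests on the transitivity of the ``remove elements from the cycle structure'' operation --- deleting $A$ and then $B$ coincides with deleting $A \cup B$ --- which was already exploited in the proof of Proposition \ref{p1}.
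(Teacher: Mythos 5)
Your argument is correct and is exactly the deduction the paper has in mind: the paper states the result as an immediate consequence of Proposition \ref{p1}, with the inverse given by $\sigma_N = \sigma_{\{1,\dots,N\}}$, and your proposal simply writes out that bookkeeping (compatibility from Proposition \ref{p1}, injectivity from the identity $\sigma_{\{1,\dots,N\}}=\sigma_N$, surjectivity by restricting a general family to the sets $\{1,\dots,N\}$). Nothing is missing; this matches the intended proof.
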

\noindent
An example of a virtual permutation defined on an uncountable set is given as follows:
if $E$ is a circle, and if for $I \in \mathcal{F}(E)$, $\sigma_I$ is the permutation 
of $I$ containing a unique cycle, obtained by counterclockwise enumeration of the points
of $I$, then $(\sigma_I)_{I \in \mathcal{F}(E)}$ is a virtual permutation of $E$.
In this example, it is natural to say that $(\sigma_I)_{I \in \mathcal{F}(E)}$ contains $E$ itself as a unique 
cycle. More generally, one can define a cycle structure for any virtual permutation, by the following result:
\begin{proposition} \label{equivalence}
Let $(\sigma_I)_{I \in \mathcal{F}(E)}$ be a virtual permutation of a set $E$, and let 
$x, y$ be two elements of $E$. Then, one of the  following two possibilities holds:
\begin{itemize} 
\item For all $I \in \mathcal{F}(E)$ containing $x$ and $y$, these two elements are in the same 
cycle of the permutation $\sigma_I$;
\item For all $I \in \mathcal{F}(E)$ containing $x$ and $y$, these two elements are in two different 
cycles of $\sigma_I$.
\end{itemize}
The first case defines an equivalence relation on the set $E$. 
\end{proposition}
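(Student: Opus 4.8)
The plan is to reduce everything to one elementary observation about the operation of removing elements from a cycle structure: if $K \subset L$ are finite subsets of $E$ and $x, y \in K$, then $x$ and $y$ lie in the same cycle of $\sigma_K$ if and only if they lie in the same cycle of $\sigma_L$. Indeed, by Definition \ref{d1}, $\sigma_K$ is obtained from $\sigma_L$ by deleting, inside each cycle, the elements of $L \setminus K$ while keeping the cyclic order of the survivors; such a deletion neither merges two cycles nor splits one, so it induces a bijection between the cycles of $\sigma_L$ (intersected with $K$) and the cycles of $\sigma_K$. In particular both ``being in the same cycle'' and ``being in different cycles'' are preserved, which is exactly the claimed equivalence. (Alternatively one iterates the one-element-removal case.)

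First I would use this observation to establish the dichotomy. Fix $x, y \in E$ and let $I, J \in \mathcal{F}(E)$ both contain $x$ and $y$. Applying the observation to $I \subset I \cup J$ and then to $J \subset I \cup J$, the assertion ``$x$ and $y$ are in the same cycle of $\sigma_I$'' is equivalent to ``$x$ and $y$ are in the same cycle of $\sigma_{I \cup J}$'', which is equivalent to ``$x$ and $y$ are in the same cycle of $\sigma_J$''. Hence the truth value of ``$x$ and $y$ belong to the same cycle of $\sigma_I$'' does not depend on which $I \in \mathcal{F}(E)$ containing both is chosen: either it is true for every such $I$ (the first alternative), or it is false for every such $I$ (the second alternative).

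Then I would check that the relation $\mathcal{R}$ defined by ``$x \mathcal{R} y$ iff for every $I \in \mathcal{F}(E)$ containing $x$ and $y$, these two elements lie in the same cycle of $\sigma_I$'' is an equivalence relation on $E$. Reflexivity is automatic, since for $x = y$ and any $I \ni x$ the element $x$ trivially lies in the same cycle of $\sigma_I$ as itself; symmetry is clear. For transitivity, assume $x \mathcal{R} y$ and $y \mathcal{R} z$; if two of $x,y,z$ coincide the conclusion is immediate, so assume they are pairwise distinct and pick any $I \in \mathcal{F}(E)$ containing all three. By the dichotomy, $x$ and $y$ lie in the same cycle of $\sigma_I$ and so do $y$ and $z$; since ``lying in the same cycle'' is an equivalence relation for the fixed permutation $\sigma_I$ of the finite set $I$, the elements $x$ and $z$ lie in the same cycle of $\sigma_I$, whence $x \mathcal{R} z$ once more by the dichotomy.

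I do not expect a real obstacle here; the only point requiring care is to make the ``removing elements from a cycle structure'' operation precise enough that the preservation of the cycle partition is manifest, and to phrase the relation so that the degenerate cases (equal elements, singletons) do not spoil reflexivity.
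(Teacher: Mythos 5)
Your proof is correct and rests on the same key fact as the paper's, namely that removing elements from the cycle structure neither merges nor splits cycles, so the same-cycle relation is preserved under the compatibility maps; you compare two sets $I,J$ through their union $I\cup J$, while the paper projects each $I$ down to the minimal sets $\{x,y\}$ (and $\{x,y,z\}$ for transitivity), which is only a cosmetic difference of direction. No gaps: your argument, including the treatment of the degenerate cases, is sound.
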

\begin{proof}
Let $I \in \mathcal{F}(E)$ containing $x$ and $y$. By the definition of virtual permutations, it is 
easy to check that $x$ and $y$ are in the
same cycle of $\sigma_I$ if and only if $\sigma_{\{x,y\}}$ is equal to the transposition $(x,y)$, these
property being independent of the choice of $I$. Moreover, if the first item in Proposition
 \ref{equivalence} holds for $x$ and $y$, and for $y$ and $z$, then the permutation
$\sigma_{\{x,y,z\}}$ contains a unique cycle, which implies that the first item holds also for 
$x$ and $z$.
\end{proof}
\noindent
From now, the equivalence relation defined in Proposition \ref{equivalence} will be denoted 
by $\sim_{(\sigma_I)_{I \in \mathcal{F}(E)}}$, or simply by $\sim$ if no confusion is possible. 
The corresponding equivalence classes will be called the {\it cycles} of $(\sigma_I)_{I \in \mathcal{F}(E)}$.
The cycle of an element $x \in E$ will be denoted $\mathcal{C}_{(\sigma_I)_{I \in \mathcal{F}(E)}} (x)$,
or simply $\mathcal{C}(x)$. One immediately checks that this notion of cycle is consistent with the classical 
notion for permutations of finite order. Another notion which should be introduced is the notion of conjugation, 
 involved in the definition of central measures given in Section \ref{s2}.
\begin{proposition} \label{conj}
Let $\Sigma^{(0)}_E$ be the group of 
the permutations of $E$ which fix all but finitely many elements
 of $E$. Then, the group $\Sigma^{(0)}_E$ acts on $\Sigma_E$ by conjugation, because of the following fact: for all $g \in \Sigma^{(0)}_E$, and
for all $\sigma = (\sigma_I)_{I \in \mathcal{F}(E)}$, there exists
  a unique virtual permutation $g \sigma g^{-1} = (\sigma'_I)_{I \in \mathcal{F}(E)}$
 such that for all $I \in \mathcal{F}(E)$ containing all the points of $E$ which are not fixed by $g$, $\sigma'_I = g_I \sigma_I g_I^{-1}$, where $g_I$ denotes 
 the restriction of $g$ to $I$.  Moreover, the cycles of $g \sigma g^{-1}$ are the images by $g$ of the cycles of $\sigma$. 
\end{proposition}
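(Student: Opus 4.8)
The plan is to build $g\sigma g^{-1}$ by conjugating $\sigma$ at a sufficiently large finite level and then projecting down via the removal operation, isolating as the one nontrivial ingredient the fact that conjugation commutes with the removal of elements from a cycle structure.

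\emph{Step 1 (a lemma on finite permutations).} I would first prove: if $K$ is a finite set, $\pi \in \Sigma_K$, $h \in \Sigma_K$, and $L \subseteq K$ satisfies $h(L) = L$, then the permutation of $L$ obtained from $h\pi h^{-1}$ by removing the elements of $K \setminus L$ equals $h_L \rho h_L^{-1}$, where $\rho \in \Sigma_L$ is obtained from $\pi$ by removing $K \setminus L$ and $h_L = h|_L$. This follows from the cycle picture: the cycles of $h \pi h^{-1}$ are the images under $h$ of the cycles of $\pi$, and since $h(L) = L$ forces $h(K\setminus L) = K \setminus L$, deleting the elements of $K\setminus L$ from a cycle $(h(x_1), h(x_2), \dots)$ of $h\pi h^{-1}$ yields exactly $h$ applied to the subword of $(x_1, x_2, \dots)$ consisting of those $x_i$ lying in $L$. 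This lemma, combined with the transitivity of removal already used in the proof of Proposition \ref{p1} (removing $A$ then $B$ equals removing $A \cup B$), is where the real work lies; everything else is bookkeeping.

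\emph{Step 2 (construction).} Let $S$ be the finite set of points of $E$ not fixed by $g$, so that $g(I) = I$ and $g_I := g|_I \in \Sigma_I$ for every $I \in \mathcal{F}(E)$ with $I \supseteq S$. For arbitrary $I \in \mathcal{F}(E)$ set $I^* := I \cup S$ and define $\sigma'_I$ to be the permutation of $I$ obtained from $g_{I^*}\sigma_{I^*}g_{I^*}^{-1}$ by removing $I^* \setminus I = S \setminus I$; for $I \supseteq S$ this gives $\sigma'_I = g_I \sigma_I g_I^{-1}$. To check that $(\sigma'_I)_{I\in\mathcal{F}(E)}$ is a virtual permutation I would take $I \subseteq J$ in $\mathcal{F}(E)$, apply Step 1 with $K = J^*$, $\pi = \sigma_{J^*}$, $h = g_{J^*}$, $L = I^*$ (noting $g(I^*) = I^*$) together with the virtual-permutation property of $\sigma$ to see that $g_{I^*}\sigma_{I^*}g_{I^*}^{-1}$ is obtained from $g_{J^*}\sigma_{J^*}g_{J^*}^{-1}$ by removing $J^*\setminus I^*$, and then compare the two ways of removing $J^* \setminus I$ from $g_{J^*}\sigma_{J^*}g_{J^*}^{-1}$ (first via $J$, then via $I^*$), using transitivity of removal, to conclude that $\sigma'_I$ is obtained from $\sigma'_J$ by removing $J\setminus I$.

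\emph{Step 3 (uniqueness, the group action, and the cycles).} For uniqueness, I would first record that any virtual permutation is determined by its values on $\{I : I \supseteq S\}$, since its value on an arbitrary $I$ is obtained from its value on $I \cup S$ by removal; hence a virtual permutation satisfying $\sigma'_I = g_I\sigma_I g_I^{-1}$ for all $I \supseteq S$ must coincide with the one constructed in Step 2. For the action axioms: if $g = \mathrm{id}$ then $S = \emptyset$ and $\sigma'_I = \sigma_I$ for all $I$, so $\mathrm{id}\cdot\sigma = \sigma$; and if $g, h \in \Sigma^{(0)}_E$ and $I$ contains $\mathrm{supp}(g)\cup\mathrm{supp}(h)$ (which contains $\mathrm{supp}(gh)$), then both $(gh)\sigma(gh)^{-1}$ and $g(h\sigma h^{-1})g^{-1}$ have $I$-component $g_I h_I \sigma_I h_I^{-1} g_I^{-1}$, so by the determination property they agree, giving $(gh)\sigma(gh)^{-1} = g(h\sigma h^{-1})g^{-1}$. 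Finally, for the cycle statement I would use that by Proposition \ref{equivalence} the cycle of a point is the union, over large finite $I$, of its cycle in $\sigma_I$: fixing $I \supseteq \{x\}\cup S$, the cycle of $x$ in $(g\sigma g^{-1})_I = g_I\sigma_I g_I^{-1}$ is $g_I$ applied to the cycle of $g_I^{-1}x = g^{-1}x$ in $\sigma_I$, and passing to the union over such $I$ gives $\mathcal{C}_{g\sigma g^{-1}}(x) = g\big(\mathcal{C}_\sigma(g^{-1}x)\big)$, i.e.\ the cycles of $g\sigma g^{-1}$ are the images under $g$ of the cycles of $\sigma$.
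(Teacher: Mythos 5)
Your proposal is correct and follows essentially the same route as the paper: you define $\sigma'_I$ by conjugating at level $I\cup\operatorname{supp}(g)$ and removing $\operatorname{supp}(g)\setminus I$, prove compatibility via the fact that conjugation (relabeling by $g$) commutes with removal of a $g$-stable set, and get uniqueness and the cycle statement from the components indexed by sets containing the support. Your Step 1 lemma is just a slightly more formal (and marginally more general, since you only require $h(L)=L$ rather than pointwise fixing of the removed elements) packaging of the commutation the paper uses, and your explicit verification of the group-action axioms fills in what the paper leaves as immediate.
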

\begin{proof}
Let $\sigma = (\sigma_I)_{I \in \mathcal{F}(E)} \in \Sigma_E$, $g \in \Sigma^{(0)}_E$ and denote $E(g)$ the set of points 
which are not fixed by $g$. For all $I \in \mathcal{F}(E)$, let us define $\sigma'_I$ as the permutation obtained from $g_{E(g) \cup I} \sigma_{E(g) \cup I} 
g_{E(g) \cup I}^{-1}$ by removing the elements of $E(g) \backslash I$ from its cycle structure. In other words, the cycle structure of $\sigma'_I$ can be obtained 
from the cycle structure of $\sigma_{E(g) \cup I}$ by replacing all the elements by their image by $g$, and then by removing the elements 
of $E(g) \backslash I$. Now, if $J \in \mathcal{F}(E)$ and $I \subset J$, then the structure of $\sigma'_I$ can also be
obtained from the structure of $\sigma_{E(g) \cup J}$ by removing the elements of $J \backslash ({E(g) \cup I})$, by replacing the remaining 
elements by their image by $g$, and then by removing the elements 
of $E(g) \backslash I$. Since all the elements of $J \backslash {E(g) \cup I}$ are fixed by $g$, the order of the two first operations is not important, and 
then $\sigma'_I$ is obtained from $\sigma_{E(g) \cup J}$ by replacing the elements by their image by $g$, and then by removing the elements 
of $(J \cup E(g)) \backslash I$. This implies that $\sigma'_I$ is obtained from $\sigma'_J$ by removing the elements of $J \backslash I$ from 
its cycle structure, in other words, $(\sigma'_I)_{I \in \mathcal{F}(E)}$ is a virtual permutation of $E$, which proves the existence of $g \sigma g^{-1}$. 
Its uniqueness is a direct consequence of the fact that all its components corresponding to a set containing $E(g)$ are determined by definition. 
Since, for $I$ containing $E(g)$, the cycle structure of $(g \sigma g^{-1})_{I}$ is obtained from the structure of $\sigma$ by replacing the 
elements by their image by $g$, it is easy to deduce that  the cycles of the virtual permutation $g \sigma g^{-1}$ are the images by $g$ of the cycles of $\sigma$, and 
that the conjugation is a group action of $\Sigma^{(0)}_E$ on $\Sigma_E$.
\end{proof}
\noindent
We have so far given the general construction of virtual permutations, and some of their main properties: in Section \ref{s2}, we introduce probability measures on the space of virtual permutations. 
\section{Central measures on general spaces of virtual permutations} \label{s2}
In order to construct a probability measure on the space of virtual permutations of a set $E$, one can expect that 
it is sufficient to define its images by the coordinate mappings, if these images satisfy some compatibility properties. 
The following result proves that such a construction is possible if the set $E$ is countable:
\begin{proposition} \label{ewensvi}
Let $E$ be a countable set, let $S(E)$ be the product of all the symmetric groups 
$\Sigma_I$ for $I \in \mathcal{F}(E)$, and let $\mathcal{S}(E)$ be the $\sigma$-algebra on $S(E)$,
 generated by all the coordinates mappings $(\sigma_I)_{I \in \mathcal{F}(E)}
\mapsto \sigma_J$ for $J \in \mathcal{F}(E)$. For all $I, J \in \mathcal{F}(E)$ such that $I \subset J$, let $\pi_{J,I}$ be 
the map from $\Sigma_J$ to $\Sigma_I$ which removes the elements of $J \backslash I$ from the cycle structure. 
Let $(\mathbb{P}_I)_{I \in \mathcal{F}(E)}$ be a family of probability measures, $\mathbb{P}_I$ defined on the finite set
$\Sigma_I$, which is compatible in the following sense: for all $I, J \in \mathcal{F}(E)$ such that $I \subset J$, the image of 
$\mathbb{P}_J$ by $\pi_{J,I}$ is $\mathbb{P}_I$. Then, there exists a unique probability measure 
$\mathbb{P}$ on the measurable space $(S(E),\mathcal{S}(E))$ satisfying
 the  following two conditions:
\begin{itemize}
\item $\mathbb{P}$ is supported by the set $\Sigma_E$ of virtual permutations;
\item For all $I \in \mathcal{F}(E)$, the image of $\mathbb{P}$ 
by the coordinate map  indexed by $I$ is equal to the measure $\mathbb{P}_I$. 
\end{itemize}
\end{proposition}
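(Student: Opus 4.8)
The plan is to reduce the projective system indexed by the directed set $\mathcal{F}(E)$ to a cofinal \emph{chain} of finite sets, to apply the Kolmogorov extension theorem to this chain, and then to transport the resulting measure to $S(E)$; uniqueness will follow from a standard $\pi$-system argument. Two elementary observations are used throughout. First, since $E$ is countable we may fix an increasing sequence $(E_n)_{n \geq 1}$ of finite subsets of $E$ with $\bigcup_{n} E_n = E$, and this sequence is cofinal in $(\mathcal{F}(E),\subseteq)$: every $I \in \mathcal{F}(E)$ satisfies $I \subseteq E_n$ for $n$ large enough. Second, the maps $\pi_{J,I}$ compose, in the sense that $\pi_{J,I} \circ \pi_{K,J} = \pi_{K,I}$ whenever $I \subseteq J \subseteq K$, because removing the elements of $K \backslash J$ and then those of $J \backslash I$ from a cycle structure has the same effect as removing all the elements of $K \backslash I$ at once (an immediate verification, already implicit in the proof of Proposition~\ref{p1}). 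Combining the two, one checks that
$$ \Sigma_E = \bigcap_{n \geq 1} \ \bigcap_{I \subseteq E_n} \bigl\{ \sigma \in S(E) : \sigma_I = \pi_{E_n,I}(\sigma_{E_n}) \bigr\}, $$
a countable intersection of elements of $\mathcal{S}(E)$; in particular $\Sigma_E \in \mathcal{S}(E)$, so that the first condition in the statement is meaningful.

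For existence, note that $(\Sigma_{E_n}, \pi_{E_{n+1},E_n}, \mathbb{P}_{E_n})_{n \geq 1}$ is a projective sequence of finite probability spaces, the hypothesis of compatibility ensuring that $\pi_{E_{n+1},E_n}$ pushes $\mathbb{P}_{E_{n+1}}$ forward onto $\mathbb{P}_{E_n}$ for every $n$. By the Kolmogorov extension theorem (which is elementary here, all the spaces being finite) there is a probability measure $\nu$ on $\prod_{n \geq 1} \Sigma_{E_n}$, equipped with its product $\sigma$-algebra, whose $n$-th marginal equals $\mathbb{P}_{E_n}$ for all $n$. For each $n$, compatibility makes the event $\{ (\tau_m)_m : \pi_{E_{n+1},E_n}(\tau_{n+1}) = \tau_n \}$ of full $\nu$-measure, hence so is the set $C$ of coherent sequences obtained by intersecting over $n$. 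Define $\Phi \colon C \to S(E)$ by $\Phi\big((\tau_m)_m\big)_I := \pi_{E_n,I}(\tau_n)$ for any $n$ with $I \subseteq E_n$; this is well defined by coherence together with the composition rule above, it is measurable, and $\Phi(C) \subseteq \Sigma_E$. Put $\mathbb{P} := \Phi_\ast \nu$. Then $\mathbb{P}(\Sigma_E) = 1$ by construction, and for any $I \in \mathcal{F}(E)$, choosing $n$ with $I \subseteq E_n$, the image of $\mathbb{P}$ under the $I$-th coordinate map is the image of $\mathbb{P}_{E_n}$ under $\pi_{E_n,I}$, which is $\mathbb{P}_I$ by compatibility. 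Thus $\mathbb{P}$ satisfies both conditions.

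For uniqueness, let $\mathbb{Q}$ be another probability measure on $(S(E),\mathcal{S}(E))$ satisfying the two conditions. The cylinder sets $\{ \sigma_{J_1} \in A_1, \dots, \sigma_{J_m} \in A_m \}$, with $J_i \in \mathcal{F}(E)$ and $A_i \subseteq \Sigma_{J_i}$, form a $\pi$-system generating $\mathcal{S}(E)$. Given such a cylinder, choose $n$ with $E_n \supseteq J_1 \cup \dots \cup J_m$ and set $B := \bigcap_{i=1}^{m} \pi_{E_n,J_i}^{-1}(A_i) \subseteq \Sigma_{E_n}$; on the set $\Sigma_E$ the cylinder coincides with $\{ \sigma_{E_n} \in B \}$. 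Since $\mathbb{P}(\Sigma_E) = \mathbb{Q}(\Sigma_E) = 1$ and both measures have $E_n$-marginal $\mathbb{P}_{E_n}$, each of them assigns the cylinder the mass $\mathbb{P}_{E_n}(B)$. Hence $\mathbb{P}$ and $\mathbb{Q}$ agree on a generating $\pi$-system, and therefore on all of $\mathcal{S}(E)$ by the monotone class theorem.

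I expect the only genuine subtlety to lie in the passage from the directed family $\mathcal{F}(E)$ to the chain $(E_n)$ together with the bookkeeping it entails: one must verify that $\Sigma_E$ and $\Phi$ are measurable, and that chain-compatibility plus the composition rule for the maps $\pi_{J,I}$ recovers the marginals $\mathbb{P}_I$ for \emph{every} finite $I$, not merely for $I = E_n$. The probabilistic core is a black-box use of the Kolmogorov extension theorem, which poses no difficulty here since each $\Sigma_I$ is finite; the countability of $E$ is used exactly to make $\mathcal{F}(E)$ countable, so that this theorem and the repeated ``countable intersection of full-measure events'' arguments are available.
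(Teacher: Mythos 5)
Your proof is correct, and it reaches the result by a slightly different organization than the paper. The paper works directly over the whole directed set $\mathcal{F}(E)$: it defines, for every finite family $I_1,\dots,I_n$ of indices, the joint law $\mathbb{P}_{I_1,\dots,I_n}$ as the pushforward of a single $\mathbb{P}_J$ (for any $J$ containing all the $I_k$), checks the Kolmogorov-type consistency of this system, observes that a measure $\mathbb{P}$ satisfies the two conditions of the statement \emph{if and only if} its finite-dimensional distributions are exactly the $\mathbb{P}_{I_1,\dots,I_n}$ (this equivalence is where both the support condition and the countability of $E$ enter), and then gets existence and uniqueness in one stroke from the Carath\'eodory extension theorem. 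You instead reduce to a cofinal chain $(E_n)$, build the inverse-limit measure on $\prod_n \Sigma_{E_n}$, transport it to $S(E)$ by the explicit map $\Phi$, and prove uniqueness separately by a $\pi$-system argument; your identity $\Sigma_E = \bigcap_n \bigcap_{I \subseteq E_n} \{\sigma_I = \pi_{E_n,I}(\sigma_{E_n})\}$ is the same observation the paper records in the remark following the proposition. The two routes rest on the same tool (extension for a projective system of finite spaces, with countability of $E$ making everything a countable intersection of cylinder conditions); yours is more explicit about why the limit measure is supported on $\Sigma_E$ and why the marginals $\mathbb{P}_I$ are recovered for every $I$, at the cost of the chain bookkeeping, while the paper's "iff" characterization makes uniqueness automatic. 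One small point of precision: when you invoke the Kolmogorov theorem for the chain, what you need is the inverse-limit measure whose joint law of $(\tau_1,\dots,\tau_n)$ is the pushforward of $\mathbb{P}_{E_n}$ under $\tau \mapsto (\pi_{E_n,E_1}(\tau),\dots,\pi_{E_n,E_{n-1}}(\tau),\tau)$; having only the one-dimensional marginals $\mathbb{P}_{E_n}$ would not give the full measure of the coherence events. Since you present the data as a projective sequence this is implicit and harmless, but it deserves to be said when writing the argument in full.
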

\begin{proof}
For every family $(I_k)_{1 \leq k \leq n}$ of elements of $\mathcal{F}(E)$, and for 
all $J \in \mathcal{F}(E)$ containing $I_k$ for all $k$, let $\mathbb{P}_{I_1,\dots,I_n;J}$ be the 
image of the probability measure  $\mathbb{P}_J$ by the mapping
$$\sigma \mapsto (\pi_{J,I_1}(\sigma),\dots,\pi_{J,I_n}(\sigma)),$$
from $\Sigma_J$ to the product space $\Sigma_{I_1} \times \dots \times \Sigma_{I_n}$.
Then, the  following two properties hold:
\begin{itemize}
\item Once the sets $(I_k)_{1 \leq k \leq n}$ are fixed, the measure 
$$\mathbb{P}_{I_1,\dots,I_n} := \mathbb{P}_{I_1,\dots,I_n; J}$$
does not depend on the choice of $J$.
\item For $n \geq 2$, the image of $\mathbb{P}_{I_1,\dots,I_n}$
by a permutation $\sigma \in \Sigma_n$ of the coordinates is equal to 
$\mathbb{P}_{I_{\sigma(1)},\dots,I_{\sigma(n)}}$, and the image by the application removing
the last coordinate is $\mathbb{P}_{I_1,\dots,I_{n-1}}$.
\end{itemize}
\noindent
These properties can easily be proven by using the compatibility property of the family $(\mathbb{P}_I)_{I \in \mathcal{F}(E)}$.
 Now, let us observe that a probability measure $\mathbb{P}$ on $(S(E),\mathcal{S}(E))$
 satisfies the conditions given in Proposition \ref{ewensvi} if and only if for all 
$I_1, \dots, I_n \in \mathcal{F}(E)$, the image of $\mathbb{P}$ by the family of coordinates 
indexed by $I_1, \dots, I_n$ is equal to $\mathbb{P}_{I_1,\dots,I_n}$ (note that the fact that $E$ is countable is used in this step). 
 The property of compatibility given in the second item above and the Caratheodory extension theorem then imply the existence
and the uniqueness of $\mathbb{P}$.
\end{proof}
\begin{remark}
\noindent
The set $\Sigma_E$ is the intersection of the sets $S_{J,K}(E) \in 
\mathcal{S}(E)$, indexed 
by the pairs $(J,K) \in \mathcal{F}(E) \times \mathcal{F}(E)$ such that $J \subset K$, 
and defined as follows: a family $(\sigma_I)_{I \in \mathcal{F}(E)}$ is in 
$S_{J,K}(E)$ if and only if $\sigma_J = \pi_{K,J} (\sigma_K)$. If $E$ is countable, 
this intersection is countable, and then $\Sigma_E \in \mathcal{S}(E)$. However, if $E$ is 
uncountable, $\Sigma_E$ does not seem to be measurable, and we do not know how to construct 
the measure $\mathbb{P}$ in this case. 
\end{remark}
\noindent
An immediate consequence of Proposition \ref{ewensvi} is the following:
\begin{corollary} \label{oquy}
Let $E$ be a countable set, and let  $\mathcal{S}_E$ be the $\sigma$-algebra on $\Sigma_E$,
 generated by the coordinates mappings $(\sigma_I)_{I \in \mathcal{F}(E)}
\mapsto \sigma_J$ for $J \in \mathcal{F}(E)$. 
Let $(\mathbb{P}_I)_{I \in \mathcal{F}(E)}$ be a family of probability measures, $\mathbb{P}_I$ defined on
$\Sigma_I$, which is compatible in the sense of Proposition \ref{ewensvi}. Then, there exists a unique probability measure 
$\mathbb{P}_E$ on the measurable space $(\Sigma_E, \mathcal{S}_E)$ such that for all $I \in \mathcal{F}(E)$, the image of $\mathbb{P}_E$ 
by the coordinate indexed by $I$ is equal to $\mathbb{P}_I$. 
\end{corollary}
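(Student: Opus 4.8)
The plan is to deduce Corollary \ref{oquy} directly from Proposition \ref{ewensvi} by transporting the measure along the inclusion $\Sigma_E \hookrightarrow S(E)$. First I would note that, by the Remark following Proposition \ref{ewensvi}, the countability of $E$ guarantees $\Sigma_E \in \mathcal{S}(E)$, so it makes sense to consider the trace $\sigma$-algebra $\mathcal{S}(E)|_{\Sigma_E}$. I would then check that this trace $\sigma$-algebra coincides with $\mathcal{S}_E$, the $\sigma$-algebra on $\Sigma_E$ generated by the coordinate maps $(\sigma_I)_{I \in \mathcal{F}(E)} \mapsto \sigma_J$: indeed $\mathcal{S}_E$ is generated by the restrictions to $\Sigma_E$ of the generators of $\mathcal{S}(E)$, and restriction commutes with the generation of $\sigma$-algebras, so the two coincide. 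Consequently $(\Sigma_E,\mathcal{S}_E)$ is a measurable subset of $(S(E),\mathcal{S}(E))$ in the usual sense.

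Next I would invoke Proposition \ref{ewensvi} to obtain the probability measure $\mathbb{P}$ on $(S(E),\mathcal{S}(E))$ that is supported on $\Sigma_E$ and whose image under each coordinate map indexed by $I$ is $\mathbb{P}_I$. Since $\mathbb{P}(\Sigma_E) = 1$, I would define $\mathbb{P}_E$ to be the restriction of $\mathbb{P}$ to the trace $\sigma$-algebra $\mathcal{S}_E = \mathcal{S}(E)|_{\Sigma_E}$; this is a probability measure on $(\Sigma_E,\mathcal{S}_E)$. For any $I \in \mathcal{F}(E)$ and any $A \subset \Sigma_I$, the set $\{(\sigma_I)_{I} \in \Sigma_E : \sigma_I \in A\}$ is the intersection with $\Sigma_E$ of the corresponding cylinder in $S(E)$, and since the cylinder is contained (up to a $\mathbb{P}$-null set, in fact exactly since $\mathbb{P}$ is carried by $\Sigma_E$) in $\Sigma_E$, the $\mathbb{P}_E$-measure of this set equals the $\mathbb{P}$-measure of the cylinder, which is $\mathbb{P}_I(A)$. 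Hence the image of $\mathbb{P}_E$ under the $I$-th coordinate is $\mathbb{P}_I$, giving existence.

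For uniqueness, suppose $\mathbb{Q}$ is another probability measure on $(\Sigma_E,\mathcal{S}_E)$ with the prescribed coordinate images. Pushing $\mathbb{Q}$ forward along the inclusion $\Sigma_E \hookrightarrow S(E)$ yields a probability measure $\tilde{\mathbb{Q}}$ on $(S(E),\mathcal{S}(E))$ which is supported on $\Sigma_E$ (since $\mathbb{Q}$ lives on $\Sigma_E$ and the latter is measurable) and whose coordinate images are the $\mathbb{P}_I$. By the uniqueness assertion in Proposition \ref{ewensvi}, $\tilde{\mathbb{Q}} = \mathbb{P}$, and restricting back to $\mathcal{S}_E$ gives $\mathbb{Q} = \mathbb{P}_E$.

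The only genuinely delicate point — everything else being bookkeeping about trace $\sigma$-algebras and pushforwards — is the identification $\mathcal{S}_E = \mathcal{S}(E)|_{\Sigma_E}$ together with the measurability of $\Sigma_E$ inside $S(E)$; but both of these are already handled, the first by the standard fact that taking traces commutes with generating $\sigma$-algebras, and the second by the Remark, which exhibits $\Sigma_E$ as a countable intersection of members of $\mathcal{S}(E)$ precisely because $E$ is countable. So there is no real obstacle; the corollary is essentially a restatement of Proposition \ref{ewensvi} on the smaller space.
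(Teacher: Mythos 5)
Your proof is correct and fills in exactly the restriction argument that the paper leaves implicit when it calls the corollary an ``immediate consequence'' of Proposition \ref{ewensvi}. The two key observations you make explicit --- that $\Sigma_E\in\mathcal{S}(E)$ by the Remark, and that $\mathcal{S}_E$ coincides with the trace $\sigma$-algebra $\mathcal{S}(E)|_{\Sigma_E}$ because generating commutes with taking traces --- are precisely what is needed to transport $\mathbb{P}$ to $\mathbb{P}_E$ and back, and your uniqueness argument via pushforward along the inclusion is the right way to invoke the uniqueness clause of Proposition \ref{ewensvi}.
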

\noindent
An example of measure on $\Sigma_E$ which can be constructed by using Proposition \ref{ewensvi} and Corollary \ref{oquy} is the
Ewens measure, which, for $E = \mathbb{N}^*$, is studied in detail in \cite{Tsi}. The precise existence result in our more general setting is the following:
\begin{proposition}
Let $\theta$ be in $\mathbb{R}_+$ and $E$ a countable set. For all $I \in \mathcal{F}(E)$, let $\mathbb{P}_I^{(\theta)}$ be 
the Ewens measure of parameter $\theta$, defined as follows: for all $\sigma \in \Sigma_I$, 
$$\mathbb{P}_I^{(\theta)} (\sigma) = \frac{ \theta^{n-1}}{ (\theta + 1) \dots (\theta + N-1) },$$
where $n$ is the number of cycles of $\sigma$ and $N$ the cardinal of $I$. Then the family $(\mathbb{P}_I^{(\theta)})_{I \in \mathcal{F}(E)}$ is 
compatible in the sense of Proposition \ref{ewensvi}: the measure $\mathbb{P}_E^{(\theta)}$ on $(\Sigma_E, \mathcal{S}_E)$ which 
is deduced from Corollary \ref{oquy} is called the {\rm Ewens measure of parameter $\theta$}.
\end{proposition}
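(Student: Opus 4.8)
The plan is to verify the compatibility hypothesis of Proposition \ref{ewensvi}, namely that for all $I, J \in \mathcal{F}(E)$ with $I \subset J$, the image of $\mathbb{P}_J^{(\theta)}$ by $\pi_{J,I}$ equals $\mathbb{P}_I^{(\theta)}$; the existence of $\mathbb{P}_E^{(\theta)}$ then follows immediately from Corollary \ref{oquy}. Since any inclusion $I \subset J$ factors into a chain of one-element extensions, it suffices to treat the case $J = I \cup \{x\}$ with $x \notin I$; write $N = |I|$, so $|J| = N+1$. Thus the task reduces to the following concrete statement: if $\tau$ is a uniformly-weighted-by-$\theta^{\#\text{cycles}-1}$ random permutation of $J$, then the permutation obtained by deleting $x$ from the cycle structure of $\tau$ has the Ewens($\theta$) law on $\Sigma_I$.

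The key combinatorial step is to understand the fibers of $\pi_{J,I}$. Fix $\rho \in \Sigma_I$ with $n$ cycles. The permutations $\tau \in \Sigma_J$ with $\pi_{J,I}(\tau) = \rho$ are obtained by inserting the element $x$ into the cycle structure of $\rho$: either $x$ forms its own fixed point (one such $\tau$, having $n+1$ cycles), or $x$ is inserted immediately after one of the $N$ elements of $I$ inside some existing cycle (N such $\tau$, each having $n$ cycles). Hence
\begin{align*}
\mathbb{P}_J^{(\theta)}\bigl(\pi_{J,I}^{-1}(\rho)\bigr)
&= \frac{\theta^{(n+1)-1}}{(\theta+1)\cdots(\theta+N)} + N\cdot\frac{\theta^{n-1}}{(\theta+1)\cdots(\theta+N)} \\
&= \frac{\theta^{n-1}(\theta + N)}{(\theta+1)\cdots(\theta+N)}
= \frac{\theta^{n-1}}{(\theta+1)\cdots(\theta+N-1)}
= \mathbb{P}_I^{(\theta)}(\rho),
\end{align*}
which is exactly what we need. (When $N = 0$, i.e. $I = \emptyset$, the empty products are taken to be $1$ and the identity is trivial; one should note that for $\theta = 0$ the formula $\mathbb{P}_I^{(\theta)}(\sigma) = \theta^{n-1}/(\theta+1)\cdots(\theta+N-1)$ is read with the convention $0^0 = 1$, so that $\mathbb{P}_I^{(0)}$ is the uniform measure on the single $N$-cycles, and the same fiber computation goes through since inserting $x$ after any element of an $N$-cycle of $\rho$ again yields an $(N+1)$-cycle.)

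The main obstacle is really just the bookkeeping of the fiber count: one must be careful that "removing $x$ from the cycle structure" is precisely the operation $\pi_{J,I}$ of Proposition \ref{p1}, so that its fibers are correctly described by the two insertion moves above, and that the count of $N$ interior insertions (one per element of $I$, regardless of $\rho$) is independent of the cycle type of $\rho$ — this independence is what makes the telescoping $\theta^{n-1}(\theta+N) = \theta^{n-1}\cdot\theta + N\theta^{n-1}$ work uniformly. Once the one-step compatibility is established, the general case follows by composing the maps $\pi_{K,J}\circ\pi_{J,I}$ along a chain and invoking functoriality (which is immediate from the definition of $\pi$ as iterated deletion), and the conclusion is then a direct application of Corollary \ref{oquy}.
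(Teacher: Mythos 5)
Your argument is correct. Note that the paper itself offers no proof of this proposition (it is treated as standard, with the case $E=\mathbb{N}^*$ referred to Tsilevich), so there is nothing to deviate from: your reduction to a one-element extension $J=I\cup\{x\}$ and the fiber count of $\pi_{J,I}$ (one insertion of $x$ as a fixed point with $n+1$ cycles, plus $N$ insertions after an element of $I$ with $n$ cycles), giving $\theta^{n-1}(\theta+N)$ over $(\theta+1)\cdots(\theta+N)$, is exactly the standard verification, and your handling of the degenerate cases $\theta=0$ and $I=\emptyset$ is fine.
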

\noindent
The Ewens measures are particular cases of the so-called {\it central measures}. Indeed, for all
 countable sets $E$ and for all $g \in \Sigma_E^{(0)}$, the conjugation by $g$ defined in 
Proposition \ref{conj} is measurable with respect to the $\sigma$-algebra $\mathcal{S}_E$. Therefore it defines a group action on 
the set of probability measures on $(\Sigma_E, \mathcal{S}_E)$: by definition, a probability measure $\mathbb{P}$ is central if and only if it is invariant by 
this action. It is easy to check that this condition holds if and only if all the image measures of $\mathbb{P}$ by the coordinate maps are invariant by 
conjugation (which is clearly the case for the Ewens measures). In the case of $E = \mathbb{N}^*$, Tsilevich  (in \cite{Tsi98}) has completely characterized 
the central measures, by using the properties of the partitions of countable sets, described by Kingman (see \cite{King75}, \cite{King78a}, \cite{King78b}). 
If the cardinality of a finite set $I$ is denoted by $|I|$, then the result by Tsilevich can  easily be translated in our framework as follows:
\begin{proposition} \label{loup37}
Let $E$ be a countable set, $\mathbb{P}$ a central measure on $(\Sigma_E, \mathcal{S}_E)$, and $\sigma$ a random virtual permutation following
the distribution $\mathbb{P}$. Then,
for all $x, y \in E$, the indicator of the event $\{x \sim_{\sigma} y \}$ is measurable with respect to the $\sigma$-algebra $\mathcal{S}_E$,  and 
 there exists a family of random variables $(\lambda(x))_{x \in E}$, unique up to almost sure equality, taking their values in the interval $[0,1]$, and satisfying the following 
properties:
\begin{itemize}
\item  For $I \in \mathcal{F}(E)$,
 $$ \frac{|I \cap \mathcal{C}_{\sigma} (x)|}{|I|} \underset{|I| \rightarrow
 \infty}{\longrightarrow}
 \lambda(x),$$
in $L^1$, and then in all the spaces $L^p$ for $p \in [1, \infty)$ since all the variables involved here
are bounded by one.
\item For any strictly increasing sequence $(I_n)_{n \geq 1}$ of nonempty sets
 in $\mathcal{F}(E)$, 
$$ \frac{|I_n \cap \mathcal{C}_{\sigma} (x)|}{|I_n|} \underset{n \rightarrow \infty}{\longrightarrow}
 \lambda(x)$$
almost surely.
\item Almost surely, for all $x \in E$, $\lambda(x) = 0$ if and only if $x$ is a fixed point of $\sigma$, i.e. a fixed point of $\sigma_I$ for all $I \in 
\mathcal{F}(E)$ containing $x$.
\item Almost surely, for all $x, y \in E$, $\lambda(x) = \lambda (y)$ if $x \sim_{\sigma} y$.
\end{itemize}
\noindent
For all $k \geq 1$, let us then define $\lambda_k$ as the 
supremum of $\inf_{1 \leq j \leq k} \lambda(x_j)$ for a sequence $(x_j)_{1 \leq j \leq k}$ of elements in $E$ satisfying 
$x_j \not\sim_{\sigma} x_{j'}$ for $j \neq j'$, the supremum of the empty set being taken to be equal to zero. The sequence $(\lambda_k)_{k \geq 1}$
is a random variable with values in the space of the sequences of elements in $[0,1]$, endowed with the 
$\sigma$-algebra generated by the coordinate maps, $(\lambda_k)_{k \geq 1}$ is uniquely determined up to almost sure equality, and 
it lies almost surely in the (measurable) simplex $\Lambda$ of non-increasing random sequences in $[0,1]$, satisfying 
the inequality:
$$\sum_{k \geq 1} \lambda_k \leq 1.$$
Moreover, the central measure $\mathbb{P}$ is uniquely determined by the distribution of the sequence $(\lambda_k)_{k \geq 1}$, 
as follows:
\begin{itemize}
\item For any probability measure $\nu$ on the space $\Lambda$ endowed with the $\sigma$-algebra of the coordinate maps, 
  there exists a unique central measure $\mathbb{P}_{\nu}$ on $(\Sigma_E, \mathcal{S}_E)$
such that the corresponding random sequence $(\lambda_k)_{k \geq 1}$ has distribution $\nu$.
\item For all sequences $\lambda \in \Lambda$, let $\mathbb{P}_{\lambda} := \mathbb{P}_{\delta_{\lambda}}$, where
$\delta_{\lambda}$ denotes the Dirac measure at $\lambda$. Then, for all $A \in \mathcal{S}(E)$, the 
map $\lambda \mapsto \mathbb{P}_{\lambda}(A)$ is measurable and for all probability measures $\nu$, one 
has:
$$\mathbb{P}_{\nu} (A) = \int_{\Lambda} \mathbb{P}_{\lambda}(A) d \nu(\lambda).$$
\end{itemize}
 \end{proposition}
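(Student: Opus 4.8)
The plan is to transport Tsilevich's classification of central measures on $\Sigma_{\mathbb{N}^*}$ (\cite{Tsi98}) to an arbitrary countable set $E$ by means of a bijection, and to complement it with a direct exchangeability argument yielding the (slightly stronger) convergence statements about the individual frequencies $\lambda(x)$. One first disposes of the trivial case where $E$ is finite: then ``$|I|\to\infty$'' forces $I=E$ and everything follows at once from Proposition \ref{viper}. So assume $E$ countably infinite and fix a bijection $\phi\colon E\to\mathbb{N}^*$. It induces a bimeasurable bijection $\Phi\colon\Sigma_E\to\Sigma_{\mathbb{N}^*}$ for the $\sigma$-algebras generated by the coordinate maps (it merely relabels the finite index sets) which intertwines the conjugation action of $\Sigma_E^{(0)}$ with that of $\Sigma_{\mathbb{N}^*}^{(0)}$ described in Proposition \ref{conj}; hence $\mathbb{P}\mapsto\Phi_*\mathbb{P}$ is a bijection between central measures, carrying cycles to cycles and the relation $\sim$ to $\sim$. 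In particular the measurability of $\{x\sim_\sigma y\}$ follows from Proposition \ref{equivalence}, since that event depends only on the single coordinate $\sigma_{\{x,y\}}$.

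Next one constructs $(\lambda(x))_{x\in E}$ intrinsically, from centrality alone. Fix $x\in E$. Conjugating $\mathbb{P}$ by finitary transpositions fixing $x$ shows that $\mathbb{P}(y_1\sim_\sigma x,\dots,y_r\sim_\sigma x)$, for pairwise distinct $y_i\neq x$, does not depend on the choice of the $y_i$. Writing $N_I:=|I\cap\mathcal{C}_\sigma(x)|$ and expanding $\mathbb{E}\bigl[(N_I/|I|)(N_{I'}/|I'|)\bigr]$ in terms of this constant pair-probability, one checks that $(N_I/|I|)_{I\in\mathcal{F}(E)}$ is Cauchy in $L^2$ as $|I|\to\infty$, hence convergent; its limit, after choosing a measurable version, I call $\lambda(x)$, which gives the first bullet and, by boundedness, the $L^p$ versions. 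For the almost sure convergence along an arbitrary strictly increasing sequence $(I_n)$, note that the indicators $(\mathbf{1}_{y\sim_\sigma x})_{y\in E\setminus\{x\}}$ form an exchangeable family: by de Finetti's theorem they are conditionally i.i.d.\ Bernoulli, and since any strictly increasing sequence of finite subsets of an infinite set enumerates its union ``in order of addition'', each $I_n$ is an initial segment for that enumeration, so the strong law of large numbers gives that $N_{I_n}/|I_n|$ converges almost surely, the limit being $\lambda(x)$ by uniqueness of limits in probability. The remaining three bullets are then easy: on a single almost sure event where the convergence holds simultaneously for all $x\in E$, the identity $\mathcal{C}_\sigma(x)=\mathcal{C}_\sigma(y)$ yields $\lambda(x)=\lambda(y)$ whenever $x\sim_\sigma y$; the dichotomy ``$\lambda(x)=0\iff x$ is a fixed point'' follows from Kingman's representation of the (exchangeable) cycle partition (\cite{King75}, \cite{King78a}, \cite{King78b}), according to which almost surely every cycle is either a singleton or an infinite block of strictly positive frequency; and uniqueness of $(\lambda(x))_{x\in E}$ up to almost sure equality is immediate since each $\lambda(x)$ is characterised as an $L^1$-limit.

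Turning to $(\lambda_k)_{k\geq1}$: since $E$ is countable, $\lambda_k$ is a countable supremum over $E^k$ of the measurable functions $(\inf_{1\le j\le k}\lambda(x_j))\,\mathbf{1}\{x_1,\dots,x_k\text{ pairwise non-}\sim\}$, hence measurable. As $\lambda(\cdot)$ is constant on cycles, $\lambda_k$ is exactly the $k$-th largest cycle-frequency counted with multiplicity, i.e.\ the family of positive frequencies sorted non-increasingly and padded by zeros; thus $\lambda_{k+1}\le\lambda_k$ and, by Kingman's theorem, $\sum_k\lambda_k$ equals the total mass of the non-trivial cycles, which is $\le1$, so $(\lambda_k)_{k\ge1}\in\Lambda$ almost surely, uniquely up to almost sure equality. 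Transported through $\phi$, this sequence coincides with the renormalised ordered cycle lengths $\lim_N l_k^{(N)}/N$ of \cite{Tsi98}.

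Finally, the correspondence $\nu\leftrightarrow\mathbb{P}_\nu$ and the disintegration $\mathbb{P}_\nu(A)=\int_\Lambda\mathbb{P}_\lambda(A)\,d\nu(\lambda)$ are precisely Tsilevich's classification of central measures on $\Sigma_{\mathbb{N}^*}$, pulled back through $\Phi$. Concretely, $\mathbb{P}_\lambda$ is the law of the virtual permutation obtained by dropping i.i.d.\ uniform variables $(U_x)_{x\in E}$ on a circle of circumference one split into consecutive arcs of lengths $\lambda_1,\lambda_2,\dots$ plus a remaining ``dust'' set, letting the points of a common arc form one cycle ordered counterclockwise and the points in the dust be fixed; this construction is manifestly central, has cycle-frequency sequence $\lambda$, and depends measurably on $\lambda$, so $\mathbb{P}_\nu:=\int\mathbb{P}_\lambda\,d\nu$ makes sense and is central with $(\lambda_k)\sim\nu$, while the fact that every central measure arises this way is the substantive half of Tsilevich's theorem. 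I expect the main obstacle to be the second paragraph: upgrading the sequential convergence of \cite{Tsi98} to convergence along the whole net $\mathcal{F}(E)$ in $L^1$ and along \emph{every} strictly increasing exhausting sequence almost surely, for which the exchangeability of the cycle partition, the $L^2$-Cauchy estimate and de Finetti's theorem are the essential ingredients; verifying that $\Phi$ identifies all the intrinsic objects, so that the classification can simply be transported, is a secondary, bookkeeping-type difficulty.
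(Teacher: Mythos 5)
Your proposal is correct, but it reaches the two delicate convergence statements by different technical means than the paper. The paper first makes the correspondence with exchangeable partitions explicit (a central measure is the image of an exchangeable partition law with uniform conditional cycle orders), quotes Kingman for the statements along a fixed exhaustion $I^{(0)}_n=\{x_1,\dots,x_n\}$, and then (i) upgrades to $L^1$ convergence over the whole net by dominated convergence along $(I^{(0)}_n)$ combined with the centrality identity that the joint law of $\bigl(|\mathcal{C}(x)\cap J|,|\mathcal{C}(x)\cap K|\bigr)$ for $x\in J\subset K$ depends only on $(|J|,|K|)$, plus a triangle inequality through $J\cup K$; and (ii) gets almost sure convergence along an arbitrary strictly increasing sequence by observing that the law of the whole sequence of ratios does not depend on the chosen sequence, so the a.s. Cauchy property transfers from the canonical exhaustion, the limit being identified through the $L^1$ statement. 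You instead prove (i) by a second-moment computation: constancy (by centrality) of the one- and two-point probabilities $\mathbb{P}(y\sim x)$ and $\mathbb{P}(y\sim x,\,z\sim x)$ makes $\bigl(|I\cap\mathcal{C}(x)|/|I|\bigr)$ an $L^2$-Cauchy net with an explicit $O(1/|I|+1/|I'|)$ bound, which is quantitative and avoids both the nested-set law identity and the dominated-convergence step; and (ii) by de Finetti plus the conditional strong law applied to the exchangeable Bernoulli family $(\mathbf{1}_{y\sim_\sigma x})_{y\neq x}$, using the (correct) observation that any strictly increasing sequence of finite sets consists of initial segments of a suitable enumeration of its union, and identifying the conditional frequency with $\lambda(x)$ via the $L^1$ limit -- an argument the paper does not use but which works. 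For the classification and disintegration you transport Tsilevich's theorem through a bijection $E\to\mathbb{N}^*$ (with the paintbox/circle construction for existence and measurability of $\lambda\mapsto\mathbb{P}_\lambda$), whereas the paper re-derives the bijection with exchangeable partitions and invokes Kingman directly; this is mostly a difference in citation depth, since the paper itself presents the proposition as a translation of \cite{Tsi98}, and its explicit partition argument is what it later reuses for $\mathbb{P}_\lambda$ and Proposition \ref{circles}. One minor remark: your aside about finite $E$ is moot, since the limits $|I|\to\infty$ only make sense for $E$ countably infinite, which is the intended setting.
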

 
 \begin{remark}
 Intuitively, $\lambda(x)$ represents the asymptotic length of the cycle of $x$ and $(\lambda_k)_{k \geq 1}$ is the non-increasing 
 sequence of cycle lengths. For all $\theta \geq 0$, the Ewens measure of parameter $\theta$ is equal to $\mathbb{P}_{\nu^{(\theta)}}$,  where 
 $\nu^{(\theta)}$ is the Poisson-Dirichlet distribution of parameter $\theta$ (Dirac measure at the sequence $(1,0,0,\dots)$ if $\theta = 0$). 
  \end{remark}
\begin{proof}
The measurability of the equivalence relation $\sim_{\sigma}$ is an immediate consequence of the fact that the event $x \sim_{\sigma} y$ 
depends only on the permutation $\sigma_{\{x,y\}}$. Therefore, $\pi_0 : \sigma \mapsto \, \sim_{\sigma}$ is a measurable map from 
$(\Sigma_E, \mathcal{S}_E)$ to the space of equivalence relations of $E$, endowed with the $\sigma$-algebra generated 
by the events of the form $\{x \sim y\}$ for $x, y \in E$. Since this measurable space is canonically identified with a measurable 
space $(\Pi_E, \mathcal{V}_E)$ such that $\Pi_E$ is the space of partitions of the set $E$, the map $\pi_0$ can be identified to
a measurable map $\pi$ from $(\Sigma_E, \mathcal{S}_E)$ to $(\Pi_E, \mathcal{V}_E)$. This map induces a map from the 
probability measures on $(\Sigma_E, \mathcal{S}_E)$ to the probability measures on $(\Pi_E, \mathcal{V}_E)$, and 
Proposition \ref{conj} implies that a central measure is always mapped to the distribution of an exchangeable partition. 
Let us now prove that this correspondence is bijective, i.e. for any distribution $\mathbb{Q}$ on $(\Pi_E, \mathcal{V}_E)$ inducing an 
exchangeable partition, there exists a unique central measure $\mathbb{P}$ on $(\Sigma_E, \mathcal{S}_E)$ such that 
the image of $\mathbb{P}$ by $\pi$ is equal to $\mathbb{Q}$. Indeed, this condition is satisfied if and only if for all $J \in \mathcal{F}(E)$, 
the law of the partition of $J$ induced by the cycle structure of $\sigma_J$, where $(\sigma_I)_{I \in \mathcal{F}(E)}$ follows 
the distribution $\mathbb{P}$, is equal to 
the law of the restriction to $J$ of a partition following the distribution $\mathbb{Q}$. Since a probability on $\Sigma_J$ which is 
invariant by conjugation is completely determined by the corresponding distribution of the cycle lengths, the uniqueness of $\mathbb{P}$ 
follows. Now, for all $J \in \mathcal{F}(E)$, and all distributions $\mathbb{Q}_J$ on the partitions of $J$, let us define $\pi^{-1}(\mathbb{Q}_J)$
as the distribution of a random permutation $\sigma_J \in \Sigma_J$ satisfying the following conditions:
\begin{itemize}
\item The partition of $J$ induced by the cycle structure of $\sigma_J$ follows the distribution $\mathbb{Q}_J$.
\item Conditionally on this partition, the law of $\sigma_J$ is uniform.
\end{itemize}
\noindent
The existence of a measure $\mathbb{P}$ satisfying the conditions above is then a consequence of the following property of compatibility, which 
can be easily checked: for all $I, J \in \mathcal{F}$ such that $I \subset J$, for all distributions $\mathbb{Q}_J$ on the partitions of $J$, 
the image of $\pi^{-1}(\mathbb{Q}_J)$ by the map from $\Sigma_J$ to $\Sigma_I$ which removes the elements of $J \backslash I$ from the 
cycle structure is equal to $\pi^{-1}(\mathbb{Q}_I)$, where $\mathbb{Q}_I$ is the image of $\mathbb{Q}_J$ by the restriction of the partitions to the 
set $I$. Now, the bijective correspondence induced by $\pi$ implies that it is sufficient to show the equivalent of Proposition \ref{loup37} 
for exchangeable partitions. Let $(x_n)_{n \geq 1}$ be an enumeration of the set $E$, and let us define the strictly increasing 
family of sets $(I^{(0)}_n)_{n \geq 1}$, by:
$$ I^{(0)}_n := \{x_1, x_2, \dots, x_n\}.$$
If in Proposition \ref{loup37} for exchangeable partitions, the convergence of $|I \cap \mathcal{C}(x)|/ |I|$ to $\lambda(x)$ in $L^1$ when
$|I|$ goes to infinity is removed, and if the almost sure convergence along any strictly increasing sequence of nonempty sets in $\mathcal{F}(E)$ 
is replaced by the convergence only along the sequence $(I^{(0)}_n)_{n \geq 1}$, then the remaining of the proposition is a direct consequence of classical  
results by Kingman. Let us now prove the convergence in $L^1$, fixing $x \in E$. 
By dominated convergence, 
\begin{equation}
 \frac{|\mathcal{C}(x) \cap I^{(0)}_{n}|}{|I^{(0)}_{n}|} \underset{n \rightarrow \infty}{\longrightarrow} \lambda(x), \label{y600}
 \end{equation}
in $L^1$ when $n$ goes to infinity. Moreover, for all $J, K \in \mathcal{F}(E)$ such that  $\{x\} \subset J \subset K$, 
the joint law of $|\mathcal{C}(x) \cap J|$ and $|\mathcal{C}(x) \cap K|$ depends only on $|J|$ and 
$|K|$, since the underlying probability measure on $(\Sigma_E, \mathcal{S}_E)$ is central. 
Now, for $|J|$ large enough, $I^{(0)}_{|J|}$, and a fortiori $I^{(0)}_{|K|}$, contain $x$. In this case, 
one has 
\begin{equation}
\mathbb{E} \left[ \left| \frac{|\mathcal{C}(x) \cap J|}{|J|} - 
\frac{|\mathcal{C}(x) \cap K|}{|K|} \right|\right] = 
\mathbb{E} \left[ \left| \frac{|\mathcal{C}(x) \cap I^{(0)}_{|J|}|}{|I^{(0)}_{|J|}|} - 
 \frac{|\mathcal{C}(x) \cap I^{(0)}_{|K|}|}{|I^{(0)}_{|K|}|} \right| \right]. \label{nnnnn}
\end{equation} 
By the convergence \eqref{y600}, the right-hand side of \ref{nnnnn} tends to zero when $|J|$ goes to 
infinity. In other words, there exists a sequence $(\epsilon(n))_{n \geq 1}$, decreasing to zero at infinity, 
such that
$$\mathbb{E} \left[ \left| \frac{|\mathcal{C}(x) \cap J|}{|J|} - 
\frac{|\mathcal{C}(x) \cap K|}{|K|} \right|\right] \leq \epsilon(|J|),$$
for all $J, K \in \mathcal{F}(E)$ such that $\{x\} \subset J \subset K$. 
 Now, for all $J, K \in \mathcal{F}(E)$ 
containing $x$, we obtain, by taking $J \cup K$ as an intermediate set and by using the triangle inequality:
\begin{equation}
\mathbb{E} \left[ \left| \frac{|\mathcal{C}(x) \cap J|}{|J|} - 
\frac{|\mathcal{C}(x) \cap K|}{|K|} \right|\right] \leq \epsilon(|J|) + \epsilon(|K|). \label{tam}
\end{equation}
Moreover, if $J \in \mathcal{F}(E)$ does not contain $x$, we have:
\begin{equation}
 \frac{|\mathcal{C}(x) \cap (J \cup \{x\})|}{|J \cup \{x\}|}  - \frac{1}{|J|} \leq
 \frac{|\mathcal{C}(x) \cap J|}{|J|} \leq  \frac{|\mathcal{C}(x) \cap (J \cup \{x\})|}{|J \cup \{x\}|}. 
\label{nghyik}
\end{equation}
\noindent
Combining \eqref{tam} and \eqref{nghyik}, we deduce, for any $J, K \in \mathcal{F}(E)$:
$$\mathbb{E} \left[ \left| \frac{|\mathcal{C}(x) \cap J|}{|J|} - 
\frac{|\mathcal{C}(x) \cap K|}{|K|} \right|\right] \leq \epsilon(|J|) + \epsilon(|K|) + \frac{1}{|J|}
+ \frac{1}{|K|}.$$
In particular, by taking $K = I^{(0)}_n$, one has for $n \geq |J|$:
$$\mathbb{E} \left[ \left| \frac{|\mathcal{C}(x) \cap J|}{|J|} - 
\frac{|\mathcal{C}(x) \cap I^{(0)}_n|}{|I^{(0)}_n|} \right|\right] 
\leq 2 \epsilon(|J|) + \frac{2}{|J|},$$
and, after letting $n \rightarrow \infty$:
$$\mathbb{E} \left[ \left| \frac{|\mathcal{C}(x) \cap J|}{|J|} - \lambda (x)
\right| \right] \leq  2 \epsilon(|J|) + \frac{2}{|J|},$$
which proves the convergence in $L^1$.  
Now, let $(I_n)_{n \geq 1}$ be a strictly increasing sequence of sets in $\mathcal{F}(E)$, containing $x$ and such that $|I_n| = n$. 
Since the underlying measure on $\Sigma_E$ is central, the law of the sequence $(|\mathcal{C}(x) \cap I_n|/n)_{n \geq 1}$ 
is independent of the choice of $(I_n)_{n \geq 1}$. Hence, $(|\mathcal{C}(x) \cap I_n|/n)_{n \geq 1}$ is almost surely a Cauchy sequence, 
since it is the case when we suppose that $I_n = I^{(0)}_n$ for $n$ large enough. By the convergence in $L^1$ proven above, 
the limit of $(|\mathcal{C}(x) \cap I_n|/n)_{n \geq 1}$ is necessarily $\lambda(x)$ almost surely. The almost sure convergence is then 
proven for any strictly increasing sequence $(I_n)_{n \geq 1}$ such that $x \in I_n$ and $|I_n| = n$
for all $n \geq 1$: the first condition can be removed simply by putting $x$ into all the sets $(I_n)_{n \geq 1}$, which changes the quotient
$|\mathcal{C}(x) \cap I_n|/|I_n|$ by at most $1/|I_n|$, the second one can be suppressed by inserting any strictly increasing sequence of 
nonempty sets in $\mathcal{F}(E)$ into an increasing sequence $(I_n)_{n \geq 1}$ such that $|I_n| = n$ for all $n \geq 1$.
\end{proof}
\noindent
As we have seen in its proof, the result of Proposition \ref{loup37} gives some information about the asymptotic cycle lengths of a random virtual permutation 
following a central measure, but does not tell anything  about the relative positions of the elements inside their cycle. It is a remarkable fact  that these relative positions  
also admit an asymptotic limit when the size of the sets in $\mathcal{F}(E)$ goes to infinity:
\begin{proposition} \label{qwerty}
Let $E$ be a countable set, $x, y$ two elements of $E$ and  $\sigma$ a virtual permutation of $E$, which follows a central probability measure.
 Then, on the event $\{x \sim_{\sigma} y\}$: 
\begin{itemize}
\item For all $I \in \mathcal{F}(E)$ containing $x$ and $y$, there exists a unique integer $k_I(x,y) \in \{0,1,\dots,| I \cap \mathcal{C}_{\sigma}(x) | -1 \}$
such that $\sigma_I^{k} (x) = y$;
\item The variable $k_I(x,y)/|I|$ converges in $L^1$, and then in all the spaces $L^p$ for $p \in [1, \infty)$, to a limit random variable $\Delta(x,y) \in [0,1]$ when 
$|I|$ goes to infinity
\item For any strictly increasing sequence $(I_n)_{n \geq 1}$ of sets in $\mathcal{F}(E)$ containing $x$ and $y$,  $k_{I_n}(x,y)/|I_n|$ converges 
almost surely to $\Delta(x,y)$ when $n$ goes to infinity.
\end{itemize}
\end{proposition}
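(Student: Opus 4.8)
The first item is immediate: on the event $\{x \sim_\sigma y\}$ the elements $x$ and $y$ belong to a common cycle of $\sigma_I$ for every $I \in \mathcal{F}(E)$ with $x, y \in I$, and this cycle has length $|I \cap \mathcal{C}_\sigma(x)|$, so there is exactly one $k \in \{0, \dots, |I \cap \mathcal{C}_\sigma(x)| - 1\}$ with $\sigma_I^{k}(x) = y$. The plan for the rest is to reduce the relative position $k_I(x,y)$, which a priori depends on $I$ in a complicated way, to a counting functional of a genuinely $I$-independent object, and then to run the argument of Proposition \ref{loup37} almost verbatim. Precisely, for $z \in E \setminus \{x,y\}$ let $B_{x,y}(z)$ denote the event that $\sigma_{\{x,y,z\}}$ is the $3$-cycle $x \mapsto z \mapsto y \mapsto x$; since the cycle structure of $\sigma_{\{x,y,z\}}$ is obtained from that of $\sigma_I$ by removing the elements outside $\{x,y,z\}$, one checks that, for every $I \in \mathcal{F}(E)$ with $x, y \in I$, the elements $\sigma_I(x), \sigma_I^{2}(x), \dots, \sigma_I^{k_I(x,y)-1}(x)$ are exactly the $z \in I \setminus \{x,y\}$ for which $B_{x,y}(z)$ holds. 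Hence, setting $\mathcal{B} := \{ z \in E \setminus \{x,y\} : B_{x,y}(z) \text{ holds}\}$ (a random subset of $E$ whose indicator at each point is $\mathcal{S}_E$-measurable, and which is empty off $\{x \sim_\sigma y\}$), we have $k_I(x,y) = 1 + |I \cap \mathcal{B}|$ on $\{x \sim_\sigma y\}$, and it suffices to prove the announced convergences for $|I \cap \mathcal{B}|/|I|$.

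Next I would establish the convergence along one canonical increasing sequence. Enumerate $E = \{x_1, x_2, \dots\}$ with $x_1 = x$ and $x_2 = y$, set $I^{(0)}_n := \{x_1, \dots, x_n\}$, and let $Y_i := \mathds{1}_{\mathcal{B}}(x_i)$ for $i \geq 3$. The key point is that $(Y_i)_{i \geq 3}$ is an exchangeable sequence of $\{0,1\}$-valued random variables. Indeed, if $g \in \Sigma^{(0)}_E$ fixes $x$ and $y$ and $E(g)$ denotes the set of points it moves, then by Proposition \ref{conj} one has $(g \sigma g^{-1})_I = g_I \sigma_I g_I^{-1}$ for $I \supseteq E(g)$, and the cycles of $g\sigma g^{-1}$ are the images under $g$ of the cycles of $\sigma$; since $g$ fixes $x$ and $y$, reducing to a $3$-element set gives the equivalence $z \in \mathcal{B}(g \sigma g^{-1}) \Longleftrightarrow g^{-1}(z) \in \mathcal{B}(\sigma)$ for every $z \in E \setminus \{x,y\}$. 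As $\mathbb{P}$ is central, $g \sigma g^{-1}$ has the same law as $\sigma$; letting $g$ range over the finite permutations of $E$ fixing $x$ and $y$ then shows that the law of $(Y_i)_{i \geq 3}$ is invariant under every finite permutation of its index set. By de Finetti's theorem, $\tfrac{1}{n}\sum_{i=3}^{n} Y_i$ converges almost surely and in $L^1$ to a random variable $\Delta(x,y)$ with values in $[0,1]$, so that $k_{I^{(0)}_n}(x,y)/n = (1 + \sum_{i=3}^{n} Y_i)/n$ converges to $\Delta(x,y)$ almost surely and in $L^1$ on $\{x \sim_\sigma y\}$.

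It then remains to upgrade this to arbitrary sets and arbitrary increasing sequences, and here I would follow the end of the proof of Proposition \ref{loup37} step for step, with $|\mathcal{C}_\sigma(x) \cap \cdot\,|$ replaced by $|\mathcal{B} \cap \cdot\,|$. First, centrality of $\mathbb{P}$ (realised by a suitable $g \in \Sigma^{(0)}_E$ fixing $x$ and $y$) shows that for $I, J \in \mathcal{F}(E)$ with $\{x,y\} \subseteq I \subseteq J$, the joint law of $(|\mathcal{B} \cap I|, |\mathcal{B} \cap J|)$ depends only on $(|I|, |J|)$, hence equals the joint law of $(|\mathcal{B} \cap I^{(0)}_{|I|}|, |\mathcal{B} \cap I^{(0)}_{|J|}|)$ (note $|I| \geq 2$, so $I^{(0)}_{|I|}$ already contains $x$ and $y$). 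Combined with the $L^1$-convergence, hence the $L^1$-Cauchy property, of $k_{I^{(0)}_n}(x,y)/n$, this produces a sequence $\epsilon(n) \downarrow 0$ such that $\mathbb{E}\big[\, \big| \, |\mathcal{B} \cap I|/|I| - |\mathcal{B} \cap J|/|J| \, \big| \,\big] \leq \epsilon(|I|)$ whenever $\{x,y\} \subseteq I \subseteq J$. Using $I \cup J$ as an intermediate set and the triangle inequality yields the bound $\epsilon(|I|) + \epsilon(|J|)$ for arbitrary $I, J \supseteq \{x,y\}$, and then taking $J = I^{(0)}_n$ and letting $n \to \infty$ gives $\mathbb{E}\big[\, \big| \, |\mathcal{B} \cap I|/|I| - \Delta(x,y) \, \big| \,\big] \leq \epsilon(|I|)$, i.e. the $L^1$, hence the $L^p$, convergence. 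Finally, for a strictly increasing sequence $(I_n)_{n \geq 1}$ with $x, y \in I_n$, centrality shows that the law of $(k_{I_n}(x,y)/|I_n|)_{n}$ depends only on $(|I_n|)_n$, so this sequence is almost surely Cauchy (it is when $I_n$ runs along the canonical enumeration with $|I_n| = n$), hence converges almost surely, necessarily to $\Delta(x,y)$ by the $L^1$ convergence; the case of general cardinalities is reduced to $|I_n| = n$ by interpolation, exactly as at the end of the proof of Proposition \ref{loup37}.

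The main obstacle, and essentially the only non-routine part, is the reduction carried out in the first two paragraphs: one must recognise that the seemingly intricate position $k_I(x,y)$ is controlled by the $I$-independent ``betweenness'' relation $B_{x,y}$, and then check the conjugation bookkeeping of Proposition \ref{conj} carefully enough to conclude that the indicators $(Y_i)_{i \geq 3}$ form an exchangeable sequence. Once this is in place, de Finetti supplies the almost sure convergence along the canonical sequence, and the remainder is the same soft argument --- centrality together with triangle inequalities --- as in the proof of Proposition \ref{loup37}.
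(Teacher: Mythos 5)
Your proof is correct, and it takes a genuinely different route to the almost sure convergence along the canonical sequence. The paper's own argument is a P\'olya-urn-type martingale: building $I_{n+1} = I_n \cup \{z_n\}$ one point at a time, centrality says that, conditionally on $\sigma_{I_n}$ and on $z_n$ being in the cycle of $x$, the new point $z_n$ is inserted at a uniform position in the cycle, so $k_{I_{n+1}}(x,y) = k_{I_n}(x,y)+1$ with probability $k_{I_n}(x,y)/|\mathcal{C}(x)\cap I_n|$ and is unchanged otherwise; this makes $\bigl(k_{I_n}(x,y)\,\mathds{1}_{x\sim y}/|\mathcal{C}(x)\cap I_n|\bigr)_{n\geq 1}$ a bounded martingale, and a.s.\ convergence follows from martingale convergence plus the already-known a.s.\ limit $|\mathcal{C}(x)\cap I_n|/|I_n|\to\lambda(x)>0$. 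You instead isolate the $I$-independent ``betweenness'' set $\mathcal{B}=\{z: \sigma_{\{x,y,z\}} \text{ is the } 3\text{-cycle } x\mapsto z\mapsto y\mapsto x\}$, verify $k_I(x,y)=1+|I\cap\mathcal{B}|$ for every $I\ni x,y$, show via conjugation by finite permutations fixing $x,y$ that the indicators $\bigl(\mathds{1}_{\mathcal{B}}(x_i)\bigr)_{i\geq 3}$ form an exchangeable sequence, and invoke de~Finetti. Both proofs share the same soft second half (centrality reduces arbitrary $I\subseteq J$ to the canonical sequence, then triangle inequalities give the $L^1$ bound and a.s.\ convergence for general increasing sequences). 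Your reduction is nicely parallel to the $|\mathcal{C}(x)\cap I|$ bookkeeping in Proposition~\ref{loup37} and arguably makes the structural reason for convergence more transparent, at the cost of invoking de~Finetti where the paper gets away with a direct and elementary martingale. One small caveat: your formula $k_I(x,y)=1+|I\cap\mathcal{B}|$ presupposes $x\neq y$; as in the paper, the degenerate case $x=y$ (where $k_I=0$ and $\Delta(x,x)=0$) should be dispatched separately before the reduction.
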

\begin{proof}
Since Proposition \ref{qwerty} is obvious for $x=y$ (with $\Delta(x,x) = 0$), let us suppose that $x \neq y$. If $x \sim y$, and if $I \in \mathcal{F}(E)$ 
contains $x$ and $y$, then these two elements of $E$ are in the same cycle of $\sigma_I$, which has length
$| I \cap \mathcal{C}_{\sigma}(x) | $. This implies the existence and the uniqueness of $k_I(x,y)$. Now, let 
$(I_n)_{n \geq 1}$ be a strictly increasing sequence of sets in $\mathcal{F}(E)$ such that $I_1 = \{x,y\}$, and for all $n \geq 1$, $I_{n+1} = I_n \cup \{z_n\}$ with
$z_n \notin I_n$. Then, for all $n \geq 1$, conditionally on $\sigma_{I_n}$, and on the event $x \sim y \sim z_n$, the position of $z_n$ inside the 
cycle structure of $\sigma_{I_{n+1}}$ is uniform among all the possible positions in the cycle containing $x$ and $y$, since the  $\sigma$ follows a central probability
measure. Hence, again conditionally on $\sigma_{I_n}$, $x \sim y \sim z_n$, one has $k_{I_{n+1}}(x,y) = k_{I_n}(x,y) +1$ with probability 
$k_{I_n}(x,y)/  |\mathcal{C}(x) \cap I_{n}| $ (if $z_n$ is inserted between $x$ and $y$ in their common cycle), and 
 $k_{I_{n+1}}(x,y) = k_{I_n}(x,y)$ with probability $1 - (k_{I_n}(x,y)/  |\mathcal{C}(x) \cap I_{n}|)$.
One deduces that $(k_{I_n}(x,y) \mathds{1}_{x \sim y}/ |\mathcal{C}(x) \cap I_{n}|)_{n \geq 1}$ is a martingale with respect to the filtration generated by 
$(\sigma_{I_n})_{n \geq 1}$. Since it takes its values in $[0,1]$, it converges almost surely to a limit random variable. Moreover, on the event 
$\{x \sim y\}$, the quotient $|\mathcal{C}(x) \cap I_{n}| / |I_n|$ 
converges almost surely to $\lambda(x)$, which is almost surely strictly positive, since $x$ is not a fixed point of $\sigma$. Hence, again on the set $x \sim y$, 
 $k_{I_n}(x,y)/|I_n|$ converges 
to a random variable $\Delta(x,y)$ almost surely, and then in $L^1$ by dominated convergence. 
Now, let $J, K$ be two finite subsets of $E$ containing $x$ and $y$. If $J$ is included in $K$, then by the $L^1$ convergence of  $k_{I_n}(x,y)/|I_n|$ at infinity, and 
by the centrality of the law of $\sigma$, one has:
\begin{align*}
\mathbb{E} [|(k_J(x,y)/|J|) - (k_K(x,y)/|K|)| \mathds{1}_{x \sim y} ] & = \mathbb{E} [|(k_{I_{|J|-1}} (x,y)/|J|) -(k_{I_{|K|-1}} (x,y)/|K|) |
 \mathds{1}_{x \sim y} ] \\ &  \leq \epsilon(|J|),
 \end{align*}
where $\epsilon(n)$ decreases to zero at infinity. If $J$ is not supposed to be included in $K$, then by using $J \cup K$ as an intermediate step, one obtains:
$$ \mathbb{E} [|(k_J(x,y)/|J|) - (k_K(x,y)/|K|)| \, \mathds{1}_{x \sim y}] \leq \epsilon(|J|) + \epsilon(|K|).$$
In particular, for $J \in \mathcal{F}(E)$ and for all $n \geq |J|$:
$$  \mathbb{E} [|(k_J(x,y)/|J|) - (k_{I_n}(x,y)/|I_n|)| \, \mathds{1}_{x \sim y} ] \leq 2 \epsilon(|J|),$$
which implies that 
$$  \mathbb{E} [|(k_J(x,y)/|J|) - \, \Delta(x,y)| \mathds{1}_{x \sim y} ] \leq 2 \epsilon(|J|).$$
Therefore, on the set $x \sim y$, $k_J(x,y)/|J|$ converges in $L^1$ to  $\Delta(x,y)$. Now, let $(J_n)_{n \geq 1}$ be a strictly increasing sequence of sets in 
$\mathcal{F}(E)$, containing $x$ and $y$. This sequence can be inserted in a sequence with satisfies the same assumptions as $(I_n)_{n \geq 1}$, 
which implies that on the event $\{x \sim y\}$,  $k_{J_n}(x,y)/|J_n|$ converges almost surely to a random variable, necessarily equal 
to  $\Delta(x,y)$ since $k_{J_n}(x,y)/|J_n|$ converges to $\Delta(x,y)$ in $L^1$.
\end{proof}
\noindent
For $x \sim y$, $\Delta(x,y)$ represents the asymptotic number of iterations of the virtual permutation $\sigma$ which are needed to go from $x$ to $y$, 
and $\Delta(x,y)/\lambda(x)$ is the asymptotic proportion of the cycle $\mathcal{C}(x)$, lying between $x$ and $y$. The distribution of 
this proportion is deduced from the following result:
\begin{proposition} \label{uniform}
Let $E$ be a countable set, and  let $\sigma$ be a virtual permutation of $E$, which follows a central probability measure. For $m, n_1, \dots n_m \geq 0$, let 
$(x_j)_{1 \leq j \leq m}$ and $(y_{j,k})_{1 \leq j \leq m, 1 \leq k \leq n_j}$ be distinct elements of $E$. 
Then, conditionally of the event: $$\{ \forall j \in \{1, \dots, m\} , \forall k \in \{1, \dots, n_j\}, x_j \sim_{\sigma} y_{j,k} \} \cap \{ \forall j_1 \neq j_2 \in \{1, \dots, m\}, 
x_{j_1} \not\sim x_{j_2} \},$$
the variables $(\Delta(x_j, y_{j,k})/\lambda(x_j))_{1 \leq j \leq m, 1 \leq k \leq n_j}$ are almost surely well-defined, independent and uniform on $[0,1]$, and they
 form a family which is independent of the variables $(\lambda(x))_{x \in E}$.
\end{proposition}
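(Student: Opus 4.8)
The plan is to condition on the random partition of $E$ into the cycles of $\sigma$ and then reduce the statement, block by block, to an explicit P\'olya urn computation.

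\textbf{Step 1 (conditioning on the cycle partition).} Let $\pi=\pi(\sigma)$ denote the partition of $E$ whose blocks are the cycles of $\sigma$, and let $A$ be the conditioning event. The event $A$ and the family $(\lambda(x))_{x\in E}$ are measurable with respect to $\pi$, and on $A$ the cycles $\mathcal{C}_\sigma(x_1),\dots,\mathcal{C}_\sigma(x_m)$ are pairwise distinct; moreover, since $x_j\sim_\sigma y_{j,k}$ with $x_j\neq y_{j,k}$, the element $x_j$ is not a fixed point of $\sigma$, so $\lambda(x_j)>0$ almost surely on $A$ by Proposition \ref{loup37}, and hence, together with Proposition \ref{qwerty}, the variables $\Delta(x_j,y_{j,k})/\lambda(x_j)$ are almost surely well-defined on $A$. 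By the proof of Proposition \ref{loup37}, conditionally on $\pi$ the virtual permutation $\sigma$ is obtained by choosing, independently over the blocks $B$ of $\pi$, a single-cycle virtual permutation of $B$ whose finite restrictions $\sigma_J$ ($J\in\mathcal{F}(B)$) are uniform among the cyclic permutations of $J$, compatibly. It therefore suffices to prove that, conditionally on $\pi$ and on $A$, for each $j$ the variables $(\Delta(x_j,y_{j,k})/\lambda(x_j))_{1\le k\le n_j}$ are i.i.d.\ uniform on $[0,1]$ and the $m$ families indexed by $j$ are independent: the independence across $j$ will follow from the conditional independence of the arrangements of the distinct blocks $\mathcal{C}_\sigma(x_j)$, and since the resulting conditional law will not depend on $\pi$, the whole family will then be independent of $\pi$, in particular of $(\lambda(x))_{x\in E}$.

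\textbf{Step 2 (the urn inside one cycle).} Fix $j$ and work on $A$ inside $B=\mathcal{C}_\sigma(x_j)$. Take $I_1$ to be the finite set of all marked points $x_i$ and $y_{i,k}$, so that on $A$ one has $I_1\cap B=\{x_j,y_{j,1},\dots,y_{j,n_j}\}$, and extend $I_1$ to a deterministic strictly increasing sequence $(I_p)_{p\ge1}$ in $\mathcal{F}(E)$ with $I_{p+1}=I_p\cup\{z_p\}$. For every $p$ the $\ell_p:=|B\cap I_p|$ elements of $B$ present in $I_p$ form a single cycle of $\sigma_{I_p}$ containing the $n_j+1$ marked points of $B$; these split the $\ell_p$ gaps of that cycle into $n_j+1$ consecutive \emph{arcs} read cyclically from $x_j$, and since the marked points never move, the number of arcs stays equal to $n_j+1$. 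Conditionally on $\pi$, the sequence $(\sigma_{I_p})_p$ is Markovian: if $z_p\notin B$ the cycle of $x_j$ and the positions $k_{I_p}(x_j,y_{j,k})$ are unchanged, while if $z_p\in B$ then, since $\sigma_{I_{p+1}}$ is uniform given $\pi|_{I_{p+1}}$, the element $z_p$ is inserted into a uniformly chosen one of the $\ell_p$ gaps of the $x_j$-cycle, independently of the past, so exactly one arc grows by one. Thus the vector of arc-lengths performs a P\'olya urn started, at time $I_1$ when all $n_j+1$ marked points are already present, from the all-ones configuration, and receiving infinitely many balls because $\ell_p/|I_p|\to\lambda(x_j)>0$ forces $\ell_p\to\infty$. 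By the classical convergence theorem for P\'olya urns, the normalized arc-lengths converge almost surely to a Dirichlet$(1,\dots,1)$ vector $(D_0,\dots,D_{n_j})$; as this holds even conditionally on $\sigma_{I_1}$, the limit is independent of $\sigma_{I_1}$ given $\pi$, in particular of the order $\tau_j$ in which $\sigma_{I_1}$ meets $y_{j,1},\dots,y_{j,n_j}$ when read from $x_j$ — and $\tau_j$ is a uniform random permutation of $\{1,\dots,n_j\}$ given $\pi$.

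\textbf{Step 3 (identifying the limit law).} By Propositions \ref{qwerty} and \ref{loup37}, along the chosen sequence
$$\frac{\Delta(x_j,y_{j,k})}{\lambda(x_j)}=\lim_{p\to\infty}\frac{k_{I_p}(x_j,y_{j,k})}{\ell_p},$$
and $k_{I_p}(x_j,y_{j,k})$ is precisely the sum of the lengths of the arcs preceding $y_{j,k}$; hence $\Delta(x_j,y_{j,\tau_j(s)})/\lambda(x_j)=D_0+\dots+D_{s-1}$ for $1\le s\le n_j$. Realizing $(D_0,\dots,D_{n_j})$ as the family of spacings of $n_j$ i.i.d.\ uniform variables, the partial sums $D_0+\dots+D_{s-1}$ are exactly the order statistics $U^{(j)}_{(1)}<\dots<U^{(j)}_{(n_j)}$ of those uniforms, so $\Delta(x_j,y_{j,k})/\lambda(x_j)=U^{(j)}_{(\tau_j^{-1}(k))}$. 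Since $\tau_j$ is uniform and independent of $(U^{(j)}_{(s)})_{1\le s\le n_j}$ given $\pi$, and a uniform random permutation applied to the order statistics of $n_j$ i.i.d.\ uniform variables gives back $n_j$ i.i.d.\ uniform variables, we conclude that $(\Delta(x_j,y_{j,k})/\lambda(x_j))_{1\le k\le n_j}$ is i.i.d.\ uniform on $[0,1]$ conditionally on $\pi$. Finally, conditionally on $\pi$ the urns and the permutations $\tau_j$ attached to distinct $j$ are functions of the independent cyclic arrangements of the distinct blocks $\mathcal{C}_\sigma(x_j)$, so the $m$ families are conditionally independent; with Step 1 this proves the proposition.

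\textbf{Main obstacle.} The delicate point is the Markovian/uniform-insertion claim of Step 2: one must verify that, conditionally on $\pi$, the successive insertions of the points $z_p$ into their cycles really are uniform over the available gaps and jointly independent over $p$ and over blocks, i.e.\ that the ``uniform in each block'' description of $\sigma$ given $\pi$ upgrades to the sequential Chinese-restaurant-type dynamics driving the urns. This amounts to combining the characterization of central measures from Proposition \ref{loup37} with the elementary fact that, after deleting a point from a uniform random permutation of a prescribed cycle type, the deleted point is, conditionally on the remaining permutation, uniformly placed among the gaps of its cycle.
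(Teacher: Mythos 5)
Your proposal is correct, but it takes a genuinely different route from the paper. The paper never introduces any sequential dynamics: it conditions on the event $\mathcal{E}_{I,(p_j)}$ fixing the cardinalities $|\mathcal{C}(x_j)\cap I|=p_j$, uses centrality to see that, conditionally, the vectors $(k_I(x_j,y_{j,k}))_{1\le k\le n_j}$ are independent over $j$ and uniform over $n_j$-tuples of distinct elements of $\{1,\dots,p_j-1\}$, lets $|I|\to\infty$ (weak convergence of these discrete uniforms to the uniform law on $[0,1]^{n_j}$, combined with the almost sure convergences of Propositions \ref{loup37} and \ref{qwerty} and dominated convergence), and gets independence from the whole family $(\lambda(x))_{x\in E}$ by adjoining auxiliary points $x_{m+1},\dots,x_{m+q}$ and a monotone class argument. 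You instead condition on the full cycle partition $\pi$, run a P\'olya-urn/Chinese-restaurant description of each block, and identify the limit as order statistics of i.i.d.\ uniforms permuted by an independent uniform $\tau_j$; since your conditional law given $\pi$ is a fixed product of uniforms, independence of the $\pi$-measurable family $(\lambda(x))_{x\in E}$ comes for free, with no auxiliary points needed. The step you flag as the main obstacle is real but short: the uniform-gap insertion conditionally on $\sigma_{I_p}$ is exactly what the paper establishes and uses in the proof of Proposition \ref{qwerty}, and the upgrade to conditioning on all of $\pi$ follows because, for any finite $K\supset I_{p+1}$, deletion of $z_p$ maps the uniform measure on permutations compatible with $\pi|_{I_{p+1}}$ to the uniform compatible measure on $I_p$ with fibers of constant size (the number of gaps), the conditional law depending only on $\pi|_{I_{p+1}}$. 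In exchange for writing out that verification, your argument yields a more explicit, dynamical description of the limit; the paper's argument is shorter and stays at the level of finite-dimensional laws.
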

\begin{remark}
It is possible to take some indexes $j$ such that $n_j = 0$. In this case, there is no point of the form $y_{j,k}$, and then no variable
of the form $\Delta(x_j, y_{j,k})/\lambda(x_j)$, involved in Proposition \ref{uniform}.
\end{remark} 
\begin{proof}
Let $q \geq 1$, and let $(x_j)_{m+1 \leq j \leq m+q}$ be a family of elements of $E$ such that all the elements $(x_j)_{1 \leq j \leq m+q}$ are distinct. 
 Let us denote:
$$\mathcal{E} :=  \{ \forall j \in \{1, \dots, m\} , \forall k \in \{1, \dots, n_j\}, x_j \sim_{\sigma} y_{j,k} \} \cap \{ \forall j_1 \neq j_2 \in \{1, \dots, m\}, 
x_{j_1} \not\sim x_{j_2} \}.$$
Moreover, for all $I \in \mathcal{F}(E)$ containing $x_j$ for all $j \in \{1, \dots, m+q\}$ and $y_{j,k}$ for all $j \in \{1, \dots, m\}$, $k \in \{1, \dots n_j\}$, and 
for all sequences $(p_j)_{1 \leq j \leq m+q}$ of strictly positive integers such that $p_j > n_j$ for all $j \in \{1, \dots, m\}$, let us define the following event:
$$\mathcal{E}_{I, (p_j)_{1 \leq j \leq m}} := \mathcal{E} \cap \{ \forall j \in \{1, \dots m+q\}, |\mathcal{C}(x_j) \cap  I | = p_j \}.$$
By the centrality of the law of $\sigma$, conditionally on $\mathcal{E}_{I, (p_j)_{1 \leq j \leq m+q}}$, the sequences $(k_{I} (x_j, y_{j,k}))_{1 \leq k \leq n_j}$, for 
$j \in \{1, \dots, m\}$ such that $n_j > 0$, are independent, and the law of  $(k_{I} (x_j, y_{j,k}))_{1 \leq k \leq n_j}$ is uniform among all the possible sequences of $n_j$ distinct elements
in $\{1, 2, \dots, p_j - 1\}$. Now, for all $j \in \{1, \dots, m\}$, let $\Phi_j$ be a continuous and bounded function from $\mathbb{R}^{n_j}$ to $\mathbb{R}$ ($\Phi_j$ is a
real constant if $n_j = 0$). Since for $n_j > 0$, the uniform distribution
on the sequences of $n_j$ district elements in $\{1/p, 2/p, \dots, (p-1)/p\}$ converges weakly to the uniform distribution on $[0,1]^{n_j}$ when $p$ goes to infinity, one has:
\begin{align*}
 & \; \mathbb{E} \left[ \prod_{j=1}^m \Phi_j  \left( (k_{I} (x_j, y_{j,k})/ p_j )_{1 \leq k \leq n_j} \right) | \, \mathcal{E}_{I, (p_j)_{1 \leq j \leq m+q}} \right] 
\\ & =  \prod_{j=1}^m \mathbb{E} \left[  \Phi_j  \left( (k_{I} (x_j, y_{j,k})/ p_j )_{1 \leq k \leq n_j} \right)  | \, \mathcal{E}_{I, (p_j)_{1 \leq j \leq m+q}} \right] \\ 
& = \left(\prod_{j=1}^m \int_{[0,1]^{n_j}} \Phi_j \right) + \alpha(  (\Phi_j, p_j)_{1 \leq j \leq m} ),
\end{align*}
where for fixed functions $(\Phi_j)_{1 \leq j \leq m}$, $|\alpha( (\Phi_j, p_j)_{1 \leq j \leq m} )|$ is uniformly bounded and tends to zero when the
 minimum of $p_j$ for $n_j > 0$ goes to infinity. One deduces that for all continuous, bounded functions $\Psi$ from $\mathbb{R}^m$ to $\mathbb{R}$:
$$
 \mathbb{E} \left[ \Psi \left( \left(  \frac{|\mathcal{C}(x_j) \cap  I |}{|I|}  \right)_{1 \leq j \leq m+q} \right)
 \prod_{j=1}^m \Phi_j  \left( (k_{I} (x_j, y_{j,k})/ (|\mathcal{C}(x_j) \cap  I |) )_{1 \leq k \leq n_j} \right) | \mathcal{E} \right] $$
 \begin{align*}
 & =  \mathbb{E} \left[ \Psi \left(\left. \left(  \frac{|\mathcal{C}(x_j) \cap  I |}{|I|}  \right)_{1 \leq j \leq m+q} \; \right| \mathcal{E} \right) \right] \, \left(\prod_{j=1}^m \int_{[0,1]^{n_j}} \Phi_j \right) 
\\ & +  \mathbb{E} \left[ \Psi \left( \left. \left( \frac{|\mathcal{C}(x_j) \cap  I |}{|I|}  \right)_{1 \leq j \leq m+q} \right) \alpha( (\Phi_j, |\mathcal{C}(x_j) \cap  I|)_{1 \leq j \leq m} ) \; 
\right| \mathcal{E} \right].
\end{align*}
Now, for any strictly increasing sequence $(I_r)_{r \geq 1}$ of sets satisfying the same assumptions as $I$, and on the event $\mathcal{E}$: 
\begin{itemize}
\item For all $j \in \{1, \dots, m+q\}$, $|\mathcal{C}(x_j) \cap  I_r |/|I_r|$ converges almost surely to $\lambda(x_j)$ when $r$ goes to infinity.
\item For all $j \in \{1, \dots, m\}$ and $k \in \{1, \dots, n_j\}$, $k_{I_r} (x_j, y_{j,k})/ (|\mathcal{C}(x_j) \cap  I_r |)$ tends almost surely
 to $\Delta(x_j, y_{j,k})/(\lambda(x_j))$, which is well-defined since $\lambda(x_j) > 0$ almost surely (by assumption, $x_j$ is not a fixed point of $\sigma$ if $n_j > 0$). 
\item For all $j \in \{1, \dots, m\}$ such that $n_j > 0$, and then $\lambda(x_j) > 0$ almost surely, $|\mathcal{C}(x_j) \cap  I_r |$ tends almost surely to infinity, which implies that 
$\alpha( (\Phi_j, |\mathcal{C}(x_j) \cap  I|)_{1 \leq j \leq m} ) $ goes to zero.
\end{itemize}
\noindent
By dominated convergence, one deduces:
$$ \mathbb{E} \left[ \Psi \left( \left( \lambda(x_j) \right)_{1 \leq j \leq m+q} \right)
 \prod_{j=1}^m \Phi_j  \left(\Delta(x_j, y_{j,k})/(\lambda(x_j) ) )_{1 \leq k \leq n_j} \right) | \mathcal{E} \right] $$
 $$ =  \mathbb{E} \left[ \Psi \left( \left( \lambda(x_j) \right)_{1 \leq j \leq m+q} \right) | \mathcal{E} \right] \, \left(\prod_{j=1}^m \int_{[0,1]^{n_j}} \Phi_j \right).$$
 This proves Proposition \ref{uniform} with $(\lambda(x))_{x \in E}$ replaced by $\lambda(x_j)_{j \in \{1, \dots, m+q\}}$. By increasing $q$ and by using 
 monotone class theorem, we are done. 
\end{proof}
\noindent
A particular case of Proposition \ref{uniform} is the following:
\begin{corollary}
Let $E$ be a countable set, $x$ and $y$ two distinct elements of $E$, and $\sigma$ a virtual permutation of $E$, which follows a central 
probability measure. Then, conditionally on the event $\{x \sim_{\sigma} y\}$, the variable $\Delta(x,y)/\lambda(x)$ is almost surely 
well-defined and uniform on the interval $[0,1]$ (which implies 
that $\Delta(x,y) \in (0, \lambda(x))$ almost surely). 
\end{corollary}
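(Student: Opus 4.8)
The plan is to obtain this corollary as the special case $m = 1$, $n_1 = 1$ of Proposition \ref{uniform}. Concretely, I would set $x_1 := x$ and $y_{1,1} := y$; since $x$ and $y$ are assumed distinct, these are indeed distinct elements of $E$, so the hypotheses of Proposition \ref{uniform} are met (no auxiliary points $x_j$ for $j > m$ are needed for this particular consequence, so one may take $q$ arbitrary). With $m = 1$ the condition $\{\forall j_1 \neq j_2, \, x_{j_1} \not\sim x_{j_2}\}$ appearing in the event $\mathcal{E}$ of Proposition \ref{uniform} is vacuous, so $\mathcal{E}$ reduces exactly to $\{x \sim_{\sigma} y\}$. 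Proposition \ref{uniform} then yields directly that, conditionally on $\{x \sim_{\sigma} y\}$, the single variable $\Delta(x,y)/\lambda(x)$ is almost surely well-defined and uniformly distributed on $[0,1]$.

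It remains to upgrade ``uniform on $[0,1]$'' to the stated inclusion $\Delta(x,y) \in (0,\lambda(x))$. First, a random variable uniformly distributed on $[0,1]$ lies in the open interval $(0,1)$ almost surely, since $\{0\}$ and $\{1\}$ are Lebesgue-null; hence $0 < \Delta(x,y)/\lambda(x) < 1$ almost surely on $\{x \sim_{\sigma} y\}$. Second, on the event $\{x \sim_{\sigma} y\}$ with $x \neq y$, the element $x$ lies in a cycle of $\sigma$ of length at least two (in every $\sigma_I$ with $\{x,y\} \subset I$ it shares its cycle with $y$), so $x$ is not a fixed point of $\sigma$; by the third item of Proposition \ref{loup37} this forces $\lambda(x) > 0$ almost surely on that event. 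Multiplying the inequalities $0 < \Delta(x,y)/\lambda(x) < 1$ by the strictly positive quantity $\lambda(x)$ gives $\Delta(x,y) \in (0,\lambda(x))$ almost surely, which completes the argument.

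Since the corollary is a literal specialization of a proposition already established, there is no genuine obstacle; the only point that deserves a line of justification is the strict positivity of $\lambda(x)$ on $\{x \sim_{\sigma} y\}$, which is precisely what allows one to pass from the open interval $(0,1)$ for the ratio to the open interval $(0,\lambda(x))$ for $\Delta(x,y)$ itself, and this is supplied by Proposition \ref{loup37}.
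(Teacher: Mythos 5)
Your proposal is correct and matches the paper's intent exactly: the corollary is stated there as a direct specialization of Proposition \ref{uniform} (take $m=1$, $n_1=1$), with no separate proof given. Your added observations --- that the non-equivalence condition is vacuous for $m=1$ and that $\lambda(x)>0$ on $\{x \sim_{\sigma} y\}$ justifies the parenthetical conclusion $\Delta(x,y) \in (0,\lambda(x))$ --- are precisely the implicit steps the paper relies on.
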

\noindent
Recall now that by definition, for $x \sim y$, $I \in \mathcal{F}(E)$ containing $x$ and $y$, $k_I (x,y)$ is the unique integer between $0$ and $|\mathcal{C}(x) \cap I | -1$
such that $\sigma^{k_I (x,y)}(x) = y$. If the condition $0 \leq k_I (x,y) \leq |\mathcal{C}(x) \cap I | -1$ is removed, then $k_I(x,y)$ is only defined 
up to a multiple of $|\mathcal{C}(x) \cap I |$. Hence, by taking $|I| \rightarrow \infty$, it is natural to introduce the class $\delta(x,y)$ of $\Delta(x,y)$ modulo 
$\lambda(x)$. This class satisfies the following properties:
\begin{proposition} \label{delta}
Let $E$ be a countable set, $x, y, z$ three elements of $E$ (not necessarily distinct), and $\sigma$ a virtual permutation of $E$, which follows a central probability 
measure. Then, almost surely on the event $\{x \sim_{\sigma} y \sim_{\sigma} z\}$:
\begin{itemize}
\item $\delta(x,x) = 0$;
\item $\delta(x,y) = - \delta(y,x)$;
\item $\delta(x,y) + \delta(y,z) = \delta(x,z)$.
\end{itemize}
\end{proposition}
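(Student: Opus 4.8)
The plan is to deduce all three relations from the additivity $\delta(x,y) + \delta(y,z) = \delta(x,z)$, which in turn I would obtain by passing to the limit in the \emph{exact} analogue of this identity at the finite level. First I would dispose of the two easy points. For every $I \in \mathcal{F}(E)$ containing $x$, the only integer in $\{0, \dots, |I \cap \mathcal{C}_{\sigma}(x)| - 1\}$ with $\sigma_I^{k}(x) = x$ is $k = 0$, so $k_I(x,x) = 0$, whence $\Delta(x,x) = 0$ and $\delta(x,x) = 0$. Granting the additivity, the antisymmetry is then the special case $z = x$: $\delta(x,y) + \delta(y,x) = \delta(x,x) = 0$, so $\delta(x,y) = -\delta(y,x)$. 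Thus the whole content is the additivity relation.

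For the additivity, I would fix $x, y, z$ and argue on the event $\{x \sim_{\sigma} y \sim_{\sigma} z\}$, which is measurable by Proposition \ref{loup37}. If $x$ is a fixed point of $\sigma$, then $\mathcal{C}_{\sigma}(x) = \{x\}$ forces $y = z = x$ and there is nothing to prove; so, by the third item of Proposition \ref{loup37}, I may assume $\lambda(x) > 0$ almost surely, and by the last item $\lambda(x) = \lambda(y) = \lambda(z)$ on this event, so that all three classes $\delta(x,y)$, $\delta(y,z)$, $\delta(x,z)$ live in the common group $\mathbb{R}/\lambda(x)\mathbb{Z}$ and the stated equality makes sense. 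Now for any $I \in \mathcal{F}(E)$ containing $x, y, z$ these three elements lie in a single cycle of $\sigma_I$ of length $L_I := |I \cap \mathcal{C}_{\sigma}(x)|$; since $x$ has period exactly $L_I$ under $\sigma_I$ and
$$\sigma_I^{\,k_I(x,y) + k_I(y,z)}(x) = \sigma_I^{\,k_I(y,z)}(y) = z = \sigma_I^{\,k_I(x,z)}(x),$$
we get $k_I(x,y) + k_I(y,z) \equiv k_I(x,z) \pmod{L_I}$; and, $k_I(x,y)$ and $k_I(y,z)$ both lying in $\{0, \dots, L_I - 1\}$, there is a carry $\varepsilon_I \in \{0,1\}$ with $k_I(x,y) + k_I(y,z) = k_I(x,z) + \varepsilon_I L_I$.

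Finally I would let $I$ run through a fixed strictly increasing sequence $(I_n)_{n \geq 1}$ of sets in $\mathcal{F}(E)$ containing $x, y, z$. Writing $\varepsilon_{I_n} = \bigl(k_{I_n}(x,y) + k_{I_n}(y,z) - k_{I_n}(x,z)\bigr) / L_{I_n}$ and applying Proposition \ref{qwerty} to the three pairs $(x,y)$, $(y,z)$, $(x,z)$ (all equivalent on our event), together with Proposition \ref{loup37} for $L_{I_n}/|I_n| \to \lambda(x)$, the right-hand side converges almost surely on $\{x \sim y \sim z\}$ to $\bigl(\Delta(x,y) + \Delta(y,z) - \Delta(x,z)\bigr)/\lambda(x)$, a legitimate quotient since $\lambda(x) > 0$ almost surely. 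As $\varepsilon_{I_n}$ is integer-valued it must be eventually constant, equal to some $\varepsilon \in \{0,1\}$; hence $\Delta(x,y) + \Delta(y,z) = \Delta(x,z) + \varepsilon\,\lambda(x)$ almost surely on $\{x \sim y \sim z\}$, and reducing modulo $\lambda(x)$ yields the additivity. I do not expect a genuine analytic obstacle: the only care needed is the bookkeeping around null sets and the degenerate case $\lambda(x) = 0$, the verification that all the relevant asymptotic lengths coincide (so the three classes live in one quotient group), and the observation that $\lambda(x) > 0$ is precisely what forces the carry $\varepsilon_{I_n}$ to stabilize.
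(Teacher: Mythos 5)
Your proof is correct and follows essentially the same route as the paper: establish the finite-level identity $k_I(x,y)+k_I(y,z)-k_I(x,z)\in\{0,|\mathcal{C}(x)\cap I|\}$ from $\sigma_I^{k_I(x,y)+k_I(y,z)}(x)=z=\sigma_I^{k_I(x,z)}(x)$, divide by $|\mathcal{C}(x)\cap I_n|$ along an increasing sequence, and use Propositions \ref{qwerty} and \ref{loup37} (with $\lambda(x)>0$ off the fixed-point case) to conclude that $\Delta(x,y)+\Delta(y,z)-\Delta(x,z)\in\{0,\lambda(x)\}$ almost surely, hence the identity modulo $\lambda(x)$. The only cosmetic difference is that you prove $\delta(x,x)=0$ directly and deduce antisymmetry from additivity with $z=x$, whereas the paper deduces both the first and second items from the third; this changes nothing of substance.
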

\begin{proof}
If $x = y =z$ and if $x$ is a fixed point of $\sigma$, then Proposition \ref{delta} is trivial (with all the values of $\delta$ equal to zero). Hence, we can suppose 
that $x$ is not a fixed point of $\sigma$. 
Let us now prove the third item, which implies immediately the first (by taking $x = y = z$) and then the second (by taking $x=z$). For any $I \in \mathcal{F}(E)$ containing
$x$, $y$ and $z$, one has necessarily, on the event $\{x \sim y \sim z\}$:
 $$\sigma_I^{k_I(x,y) + k_I(y,z)}(x) =  \sigma_I^{k_I(x,z)}(x) = z,$$
 which implies 
 $$k_I(x,y) + k_I(y,z) - k_I(x,z) \in \{0, |\mathcal{C}(x) \cap I | \}$$
 since $k_I(x,y)$,  $k_I(y,z)$ and  $k_I(x,z)$ are in the set $\{0,1, \dots,  |\mathcal{C}(x) \cap I | - 1\}$. Let $(I_n)_{n \geq 1}$ be a strictly increasing 
 sequence of sets in $\mathcal{F}(E)$ containing $x$, $y$ and $z$: the sequence $$\left( \frac{k_{I_n}(x,y) + k_{I_n}(y,z) - 
 k_{I_n}(x,z)}{|\mathcal{C} (x)  \cap I_n|} \right)_{n \geq 1}$$
 of elements in $\{0,1\}$ converges almost surely to $(\Delta(x,y) + \Delta(y,z) - \Delta(x,z))/(\lambda(x))$ (recall that $\lambda(x) > 0$ since 
 $x$ is not a fixed point of $\sigma$). One deduces that $\Delta(x,y) + \Delta(y,z) - \Delta(x,z)$
 is almost surely equal to $0$ or $\lambda(x)$, which implies that  $\delta(x,y) + \delta(y,z) = \delta(x,z)$. 
\end{proof}
\noindent
The properties of $\delta$ suggest the following representation of the cycle structure of $\sigma$: for $x \in E$ which is not a fixed point of $\sigma$, one puts 
the elements of the cycle of $x$ 
for $\sim_{\sigma}$ into a circle of perimeter $\lambda(x)$, in a way such that for two elements $y, z \in \mathcal{C}(x)$, $\delta(y,z)$ is the length, counted 
counterclockwise, of the circle arc between $x$ and $y$. In order to make the representation precise, we shall fix the asymptotic cycle
lengths of $\sigma$, by only considering the central measures of the form $\mathbb{P}_{\lambda}$, for a given sequence $\lambda$ in the simplex 
$\Lambda$. When we make this assumption, we do not lose generality, since by Proposition \ref{loup37}, any central measure on $\Sigma_E$ can be written as a mixture of the measures of the form $\mathbb{P}_{\lambda}$ (i.e. an integral with respect to a probability measure $\nu$ on $\Lambda$), and we obtain the following result:
\begin{proposition} \label{circles}
Let $\lambda := (\lambda_k)_{k \geq 1}$ be a sequence in the simplex $\Lambda$, let $E$ be a countable set, and let $\sigma = (\sigma_I)_{I \in \mathcal{F}(E)}$ be
 a virtual permutation of $E$, following the central probability measure $\mathbb{P}_{\lambda}$ defined in Proposition \ref{loup37}. 
For $k \geq 1$ such that $\lambda_k > 0$, let $C_k$ be a circle of perimeter $\lambda_k$, the circles being pairwise disjoint, and 
for all $x, y \in C_k$, let $y-x \in \mathbb{R}/\lambda_k 
\mathbb{Z}$ be the length of the arc of circle from $x$ to $y$, counted counterclockwise and modulo $\lambda_k$. 
Let $L$ be a segment of length $1 - \sum_{k \geq 1} \lambda_k$ (the empty set if $\sum_{k \geq 1} \lambda_k = 1$), disjoint of the circles $C_k$, and let $C$
be the unions of all the circles $C_k$ and $L$. 
 Let $\mu$ the uniform probability measure on $C$ (endowed with the 
$\sigma$-algebra generated by the Borel sets of $L$ and $C_k$, $k \geq 1$), i.e. the unique measure 
such that $\mu(A)$ is equal to the length of $A$, for any set $A$ equal to an arc of one of the circles $C_k$, or a segment included in $L$.
Then, after enlarging, when necessary, the probability space on which $\sigma$ is defined,  
 there exists a family $(X_x)_{x \in E}$ of i.i.d. random variables on the space $C$, following the law $\mu$, and such that 
 the following conditions hold almost surely:
 \begin{itemize}
\item For all $x \in E$, $X_x \in L$ if and only if $x$ is a fixed point of $\sigma$.
\item For all $x$, $y$, distinct elements of $E$, $x \sim_{\sigma} y$ if and only if $X_x$ and $X_y$ are on the same circle $C_k$, and 
 in this case, $\lambda(x) = \lambda(y) = \lambda_k$.
\item For all $x$, $y$, distinct elements of $E$ such that $x \sim_{\sigma} y$, $\delta(x,y)$ is equal to $X_y-X_x$ modulo $\lambda(x)$. 
\end{itemize}
\end{proposition}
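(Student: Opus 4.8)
The plan is to build $(X_x)_{x\in E}$ explicitly from $\sigma$ together with an independent source of extra randomness, to check the three listed properties directly, and finally to identify the law of $(X_x)_{x\in E}$ by comparison with Kingman's paintbox picture. Throughout I work under $\mathbb{P}_\lambda$. By Proposition \ref{loup37}, almost surely $(\lambda_k)_{k\ge1}=\lambda$ and $\lambda(x)=0$ exactly when $x$ is a fixed point of $\sigma$; since a finite cycle would force $\lambda(x)=0$, every non-fixed cycle is infinite and carries a well-defined asymptotic length $\lambda(\mathcal{C})\in\{\lambda_k:\lambda_k>0\}$, and from the definition of the $\lambda_k$ together with the structure of the exchangeable partition $\pi(\sigma)$ one gets that, almost surely, the multiset of the $\lambda(\mathcal{C})$ over non-fixed cycles is precisely $\{\lambda_k:\lambda_k>0\}$, with only finitely many cycles of each given positive length. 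First I would enlarge the probability space to carry, independently of $\sigma$ and of one another: i.i.d.\ variables $(Y_x)_{x\in E}$ uniform on $L$ (a vacuous ingredient if $L=\emptyset$, since then $\sum_k\lambda_k=1$ and $\sigma$ has no fixed point a.s.); i.i.d.\ variables $(\Theta_{\mathcal{C}})$ uniform on $[0,1)$ indexed by the non-fixed cycles; and enough randomness to choose, within each class of non-fixed cycles sharing a common length $v$, a uniformly random bijection onto the set of circles of perimeter $v$, whose image I denote $C_{k(\mathcal{C})}$. (The law of this bijection does not depend on the auxiliary enumeration used to realize it, a point that matters below.)

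Next I define $X_x$. For $x$ a fixed point, set $X_x:=Y_x\in L$. For $x$ in a non-fixed cycle $\mathcal{C}$, let $x_{\mathcal{C}}$ be the element of $\mathcal{C}$ of least index in the chosen enumeration of $E$, let $\delta(\cdot,\cdot)$ be as in Proposition \ref{delta}, and set $X_x:=\lambda(x)\,\Theta_{\mathcal{C}}+\delta(x_{\mathcal{C}},x)$, read modulo $\lambda(x)$ as a point of the circle $C_{k(\mathcal{C})}$, which has perimeter $\lambda(x)$. The three asserted properties are then immediate. We have $X_x\in L$ precisely for fixed $x$, because $L$ is disjoint from all circles. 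For distinct $x,y$, the points $X_x$ and $X_y$ lie on a common circle $C_k$ if and only if $x$ and $y$ lie in a common non-fixed cycle, i.e.\ if and only if $x\sim_\sigma y$, and in that case $\lambda(x)=\lambda(y)$ is the perimeter of that circle by Proposition \ref{loup37}. Finally, for $x\sim_\sigma y$, $X_y-X_x=\delta(x_{\mathcal{C}},y)-\delta(x_{\mathcal{C}},x)=\delta(x,y)$ modulo $\lambda(x)$ by the cocycle identity of Proposition \ref{delta}.

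There remains the substantive claim: $(X_x)_{x\in E}$ is i.i.d.\ with law $\mu$. I would prove it by comparison. Take i.i.d.\ variables $(X'_x)_{x\in E}$ with law $\mu$ on an auxiliary space, and let $\sigma'$ be the virtual permutation obtained from $(X'_x)$ exactly as in the circle example of Section \ref{s1}, applied circle by circle: $\sigma'_I$ fixes every $x\in I$ with $X'_x\in L$ and, on each circle $C_k$, sends each $x\in I$ with $X'_x\in C_k$ to the element of $I$ whose $X'$-point is the next one counterclockwise on $C_k$. One checks that $\sigma'$ is a virtual permutation; it is central, since this construction is equivariant under finite permutations of $E$ while $(X'_x)$ is i.i.d.; and by the strong law of large numbers its ordered cycle-length sequence is almost surely $\lambda$ (the cycle read off $C_k$ has asymptotic frequency $\mu(C_k)=\lambda_k$, and the fixed points have density $\mu(L)=1-\sum_k\lambda_k$), so $\sigma'$ has law $\mathbb{P}_\lambda$ by Proposition \ref{loup37}. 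I then claim that the conditional law of $(X'_x)$ given $\sigma'$ coincides with the conditional law of $(X_x)$ given $\sigma$ in my construction. In both cases the virtual permutation determines the partition into cycles and, inside each cycle $\mathcal{C}$, the whole family $\delta(\cdot,\cdot)$, but it leaves free exactly three things: on which circle of perimeter $\lambda(\mathcal{C})$ the cycle sits, the absolute rotation of its configuration on that circle, and the positions of the fixed points in $L$. Conditionally on $\sigma'$ these are, respectively, a uniformly random bijection of cycles onto equal-perimeter circles, an independent uniform rotation of each cycle's configuration, and independent uniform positions in $L$ — because $\mu^{\otimes E}$ is invariant under rotating any circle and under permuting circles of equal perimeter, operations that leave $\sigma'$ unchanged, and because $\mu$ restricts to the uniform law on $L$ while $\sigma'$ records nothing about the positions of fixed points. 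The rotation statement uses that, conditionally on $\pi(\sigma)$, the variables $\delta(x_{\mathcal{C}},x)/\lambda(x)$ over $x\in\mathcal{C}$ are i.i.d.\ uniform on $[0,1]$ and independent of $(\lambda(x))_{x\in E}$, which is exactly Proposition \ref{uniform}; and this is precisely how $(X_x)$ given $\sigma$ is built. Hence $(\sigma,(X_x))$ and $(\sigma',(X'_x))$ have the same law, and in particular $(X_x)_{x\in E}$ is i.i.d.\ with law $\mu$.

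The hard part will be this final paragraph: turning the invariance considerations into a rigorous identification of the two conditional laws, and in particular treating cleanly the case of several circles of the same perimeter. That is where the uniform random bijection enters, and where one must verify that the distribution of $(X_x)$ does not depend on how this bijection is chosen — equivalently, that any two choices differ by a deterministic, $\mu$-preserving relabeling of the circles, which affects neither the stated properties nor the conclusion. Everything else is routine bookkeeping on top of Propositions \ref{loup37}, \ref{qwerty}, \ref{delta} and, crucially, \ref{uniform}. An alternative that bypasses $\sigma'$ altogether is to arrange the construction to be genuinely equivariant under finite permutations of $E$, so that $(X_x)_{x\in E}$ is an exchangeable family, and then to invoke de Finetti's theorem: the directing measure is the almost sure weak limit of $\frac1n\sum_{i\le n}\delta_{X_{x_i}}$, and this limit equals $\mu$ by the density statements of Proposition \ref{loup37} together with the equidistribution provided by Proposition \ref{uniform}.
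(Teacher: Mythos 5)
Your construction of the family $(X_x)_{x\in E}$ is essentially the same as the paper's (uniform positions in $L$ for fixed points, one uniform rotation per cycle, relative positions prescribed by $\delta$, and a uniform matching of cycles to circles of equal perimeter -- the paper's permutations $\tau_\lambda$ play exactly the role of your random bijection). Where you genuinely diverge is in the identification of the law of $(X_x)_{x\in E}$. The paper argues directly: by Proposition \ref{uniform}, conditionally on the partition the normalized relative positions are i.i.d.\ uniform and independent of the cycle lengths, and the circle indices $(k_n)$ are then shown to be i.i.d.\ with $\mathbb{P}[k_n=k]=\lambda_k$ via an exchangeability argument (this is where the $\tau_\lambda$ are crucial) combined with the almost sure frequencies of Proposition \ref{loup37} and dominated convergence. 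You instead prove first, by centrality (equivariance of the paintbox construction under $\Sigma^{(0)}_E$ plus i.i.d.-ness), the law of large numbers, and the uniqueness statement in Proposition \ref{loup37}, that the virtual permutation $\sigma'$ built from genuinely i.i.d.\ $\mu$-points has law $\mathbb{P}_\lambda$ -- i.e.\ you establish the analogue of Corollary \ref{circles3} independently and up front, reversing the paper's logical order -- and then you transfer back by identifying the conditional law of the configuration given the virtual permutation in both models. This buys a cleaner conceptual picture (all the symmetry is packaged into invariance of $\mu^{\otimes E}$ under rotations and permutations of equal circles, and no hand computation of the joint law of the $(k_n)$ is needed), at the price of a disintegration/orbit argument that the paper avoids; your suggested de Finetti alternative is in fact close in spirit to how the paper handles the indices $(k_n)$.

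Two points must be nailed down to close your final paragraph. First, group invariance alone does not pin down the conditional law of $(X'_x)$ given $\sigma'$: you also need that, almost surely in the i.i.d.\ model, the configuration realizes the abstract quantities attached to $\sigma'$, namely $X'_y-X'_x=\delta_{\sigma'}(x,y)$ modulo the perimeter and $\lambda_{\sigma'}(x)$ equals the perimeter of the circle carrying $X'_x$ (both follow from the strong law of large numbers applied to the arcs, since $k_I(x,y)$ counts the sample points on the arc from $X'_x$ to $X'_y$); only then is the fiber over $\sigma'$ a single orbit of the compact group (times the free $L$-coordinates, handled by your independence remark), so that the conditional law is the Haar/uniform one you describe. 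Second, Proposition \ref{uniform} is not actually what carries your route -- the conditional law of $(X_x)$ given $\sigma$ is uniform-rotation-plus-uniform-matching by construction, and the conditional law of $(X'_x)$ given $\sigma'$ comes from the invariance and orbit argument just described -- so the sentence invoking it should either be dropped or recognized as a consistency check; it is the paper's direct route, not yours, that relies on Proposition \ref{uniform} in an essential way. With these repairs your argument is sound.
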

\begin{proof}
After enlarging, when necessary, the underlying probability space, it is possible to define, for all $\lambda \in (0,1)$ such that 
there exist several consecutive indices $k$ satisfying $\lambda_k = \lambda$, a uniform random permutation $\tau_{\lambda}$ of the set of these indices, 
for all $k \geq 1$ such that $\lambda_k > 0$, 
a uniform random variable $U_k$ on $C_k$, and for all $n \geq 1$, a uniform variable $V_n$ on $L$, in a way such that the virtual permutation $\sigma$, the 
variables $(U_k)_{k \geq 1}$, $(V_n)_{n \geq 1}$ and all the permutations of the form $\tau_{\lambda}$ are independent. Now, let $(x_n)_{n \geq 1}$ be an enumeration of $E$. By the results on 
partitions by Kingman, it is almost surely possible to define a sequence $(k_n)_{n \geq 1}$ of integers by induction, as follows:
\begin{itemize}
\item If $x_n$ is a fixed point of $\sigma$, then $k_n= 0$.
\item If $x_n$ is equivalent to $x_m$ for some $m < n$, then $k_n = k_m$, independently of the choice of the index $m$.
\item If $x_n$ is not a fixed point of $\sigma$ and is not equivalent to $x_m$ for any $m < n$, then $k_n$ is an integer such that 
$\lambda(x_n) = \lambda_{k_n}$, and if this condition does not determine $k_n$ uniquely, then this integer is chosen in a way such that $k_n$  is different from $k_m$ for 
all $m < n$ and $\tau_{\lambda(x_n)}(k_n)$ is as small as possible. 
\end{itemize}
\noindent 
The random permutations of the form $\tau_{\lambda}$ are used in order to guarantee the symmetry between all the circles in $C$ which have the same
perimeter. Moreover, one checks that for all $m, n \geq 1$ such that $x_m$ and $x_n$ are not fixed points of $\sigma$,  $\lambda(x_n) = \lambda_{k_n}> 0$, and 
$x_m \sim x_n$ if and only if $k_m = k_n$. Let us now define the variables $(X_{x_n})_{n \geq 1}$ by induction, as follows:
\begin{itemize}
\item If $k_n = 0$, then $X_{x_n} = V_n$.
\item If $k_n > 0$ and $k_n \neq k_m$ for all $m < n$, then $X_{x_n} = U_{k_n} \in C_{k_n}$.
\item If $k_n > 0$, if $k_n = k_m$ for some $m < n$, and if $m_0$ denotes the smallest possible value of $m$ satisfying this equality, then
 $X_{x_n}$ is the unique point of $C_{k_n} = C_{k_{m_0}}$ such that 
$X_{x_n} - U_{k_{m_0}} = \delta(x_{m_0},x_n)$, modulo $\lambda_{k_n} = \lambda_{k_{m_0}}$. 
\end{itemize}
\noindent
For all $n \geq 1$, $X_{x_n} \in L$ if and only if $k_n = 0$, otherwise, $X_{x_n} \in C_{k_n}$. Moreover, 
if $k_m = k_n > 0$ for some $m, n \geq 1$, and if $m_0$ is the smallest index such that $k_m = k_{m_0}$, 
then modulo $\lambda_{k_m}$, almost surely, $$X_{x_n} - X_{x_m} = (X_{x_n} - U_{k_{m_0}}) -  (X_{x_m} - U_{k_{m_0}})
= \delta(x_{m_0}, x_n) -  \delta(x_{m_0}, x_m) = \delta(x_m,x_n).$$
One deduces that Proposition \ref{circles} is satisfied, provided that the variables $(X_{x_n})_{n \geq 1}$ are independent
and uniform on $C$. 
In order to show this fact, let us first observe that by Proposition \ref{uniform}, the following holds: for all $n \geq 1$, conditionally on 
the restriction of the equivalence relation $\sim$ to the set $\{x_1, \dots, x_n\}$, the variables $\Delta(x_{m_0}, x_m)/ \lambda(x_{m_0})$, for 
$m_0 < m \leq n$ and  $x_{m_0} \sim x_m$, where $x_{m_0}$ is the element of its equivalence class with the smallest index, are uniform on 
$[0,1]$, independent, and form a family  which is independent of $(\lambda(x_m))_{1 \leq m \geq n}$. Now, once the restriction 
of $\sim$ to $\{x_1, \dots, x_n\}$ is given, the sequence $(k_m)_{1 \leq m \leq n}$ 
is uniquely determined by the sequence $(\lambda(x_m))_{1 \leq m \geq n}$, and the permutations of the form $\tau_{\lambda}$ (which form 
a family independent of $\sigma$), and then it is independent of the family of variables 
$\Delta(x_{m_0}, x_m)/ \lambda(x_{m_0})= \Delta(x_{m_0}, x_m)/ \lambda_{k_{m_0}}$ stated above. Since the restriction of   
$\sim$ to $\{x_1, \dots, x_n\}$ is a function of the sequence  $(k_m)_{1 \leq m \leq n}$, one deduces that conditionally on this sequence,
the variables $(X_{x_m})_{m \geq 1}$ are independent, $X_{x_m}$ being uniform on $C_{k_m}$ if $k_m > 0$, and uniform on $L$ if 
$k_m = 0$. 
It is now sufficient to prove that the variables $(k_n)_{n \geq 1}$ are independent and that for all $k \geq 1$,
$\mathbb{P} [ k_n = k ] = \lambda_k$. By symmetry, for $k^{(1)},\dots,k^{(p)} \geq 1$, and for any distinct integers $n_1, \dots, n_p \geq 1$,
the probability that  $k_{n_j} = k^{(j)}$ for all $j \in \{1, \dots, p\}$, does not depend on $n_1, \dots, n_p$ (note that 
the permutations $\tau_{\lambda}$ play a crucial role for this step of the proof of Proposition \ref{circles}). 
Hence, for $m > p$,
$$\mathbb{P} [ \forall j  \in \{1, \dots, p\}, k_j = k^{(j)} ] 
= \frac{(m-p)!}{m!} \, \mathbb{E} \left[ \sum_{1 \leq n_1 \neq \dots \neq n_p \leq m} \mathds{1}_{\forall j  \in \{1, \dots, p\}, k_{n_j} = k^{(j)} } \right],$$
 and then:
 \begin{align*}
   \mathbb{E} \left[ \prod_{j=1}^p \frac{\left(|\{r \in  \{1, \dots m \}, k_r = k^{(j)} \}| - p \right)_+}{m} \, \right] 
  & \leq \mathbb{P} [ \forall j  \in \{1, \dots, p\}, k_j= k^{(j)}] 
  \\ & \leq \mathbb{E} \left[ \prod_{j=1}^p \frac{|\{r \in  \{1, \dots m \}, k_r = k^{(j)} \}| }{m-p}  \, \right] 
 \end{align*}
 \noindent
 By using Proposition \ref{loup37} and dominated convergence for $m$ going to infinity, one deduces:
 $$\mathbb{P} [ \forall j  \in \{1, \dots, p\}, k_j= k^{(j)}] = \prod_{j=1}^p \lambda_{k_j}.$$
\end{proof}
\noindent
Now, we observe that it is possible to recover the virtual permutation $\sigma$ from the variables $(X_x)_{x \in E}$. More precisely, 
one has the following:
\begin{proposition} \label{circles2}
Let us consider the setting and the notation of Proposition \ref{circles}. For all $I \in \mathcal{F}(E)$ and $x \in I$, the element $\sigma_I(x)$ can almost surely
be obtained as follows:
\begin{itemize}
\item If $X_x \in L$, then $\sigma_I(x) = x$.
\item If for $k \geq 1$, $X_x \in C_k$, then $\sigma_I(x) = y$, where $X_y$ is the first point of the intersection of $C_k$ and the set $\{X_z, z \in I\}$, 
 encountered by moving counterclockwise on $C_k$, starting just after $X_x$ (for example, if $X_x$ is the unique element of $C_k \cap \{X_z, z \in I\}$, 
 then $x$ is a fixed point of $\sigma_I$). 
\end{itemize}
\end{proposition}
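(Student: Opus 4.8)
The plan is to treat the two bullets separately, the first being immediate and the second reconstructing the \emph{exact} action of $\sigma_I$ from the \emph{asymptotic} quantities $\Delta$ and $\delta$ already linked to the variables $(X_z)_{z\in E}$ by Propositions \ref{qwerty}, \ref{uniform} and \ref{circles}. If $X_x\in L$, the first item of Proposition \ref{circles} says precisely that $x$ is a fixed point of $\sigma$, hence of $\sigma_I$ for every $I\in\mathcal{F}(E)$ containing $x$, which settles the first bullet. For the second bullet, fix an enumeration $E=\{x_1,x_2,\dots\}$, set $I_n:=\{x_1,\dots,x_n\}$, and work on the almost sure event on which the $(X_z)_{z\in E}$ are pairwise distinct (possible since they are i.i.d.\ with non-atomic law $\mu$), the conclusions of Proposition \ref{circles} hold, and for every pair $z\sim_\sigma w$ one has $k_{I_n}(z,w)/|I_n|\to\Delta(z,w)$ (third item of Proposition \ref{qwerty}, applied along $(I_n)_{n\ge N}$ for $N$ large enough that $z,w\in I_N$); since there are countably many such pairs and countably many finite subsets $I$, this intersection still has full measure.

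Now fix $I\in\mathcal{F}(E)$ and $x\in I$ with $X_x\in C_k$. By the second item of Proposition \ref{circles} the set $S:=\{z\in I:\ z\sim_\sigma x\}$ of elements of $I$ lying in $\mathcal{C}_\sigma(x)$ equals $\{z\in I:\ X_z\in C_k\}$, and since $z$ and $x$ lie in the same cycle of $\sigma_I$ if and only if $z\sim_\sigma x$ (Proposition \ref{equivalence}), $S$ is exactly the cycle of $x$ in the finite permutation $\sigma_I$. The combinatorial heart of the argument is the observation that, for $n$ with $I\subset I_n$, writing $\sigma_I=\pi_{I_n,I}(\sigma_{I_n})$, the deletion of the elements of $I_n\setminus I$ from the cycle structure of $\sigma_{I_n}$ does not change the cyclic order of the surviving elements; hence $\sigma_I(x)$ is the first element of $I$ reached by iterating $\sigma_{I_n}$ from $x$. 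If $S=\{x\}$ this first element is $x$ itself, so $\sigma_I(x)=x$, in agreement with the statement (moving counterclockwise from just after $X_x$ one returns to $X_x$). If $|S|\ge 2$, then $k_{I_n}(x,u)\ge 1$ for $u\in S\setminus\{x\}$ while $k_{I_n}(x,x)=0$, and the above observation gives
$$ k_{I_n}\bigl(x,\sigma_I(x)\bigr)=\min_{u\in S\setminus\{x\}} k_{I_n}(x,u). $$

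It then remains to pass to the limit. Dividing this identity by $|I_n|$ and letting $n\to\infty$ — the minimum being over the fixed finite set $S\setminus\{x\}$, it commutes with the limit — yields $\Delta\bigl(x,\sigma_I(x)\bigr)=\min_{u\in S\setminus\{x\}}\Delta(x,u)$. By the corollary following Proposition \ref{uniform} together with the third item of Proposition \ref{circles}, for each $u\in S\setminus\{x\}$ the number $\Delta(x,u)$ lies in $(0,\lambda_k)$ and is the representative there of the class $\delta(x,u)=X_u-X_x$ modulo $\lambda_k$, i.e.\ the length of the counterclockwise arc of $C_k$ from $X_x$ to $X_u$. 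Since the points $(X_z)_{z\in I}$ are pairwise distinct, these arc lengths are pairwise distinct, so the minimum is attained at a unique $u\in S\setminus\{x\}$; that $u$ equals $\sigma_I(x)$, and it is precisely the element whose image point $X_u$ is the first point of $C_k\cap\{X_z:\ z\in I\}$ met when moving counterclockwise starting just after $X_x$, which is the asserted description. I expect the main obstacle to be the combinatorial step: carefully justifying that cycle-deletion preserves cyclic order and converting this into the displayed identity for $k_{I_n}$, while keeping the degenerate case $S=\{x\}$ consistent with the geometric picture; the limit passage and the measurability/almost-sure bookkeeping over the countable family of pairs $(I,x)$ are routine given Propositions \ref{qwerty}, \ref{uniform} and \ref{circles}.
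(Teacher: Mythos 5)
Your argument is correct and follows essentially the same route as the paper: you show that $\Delta(x,\sigma_I(x))$ is the minimum of $\Delta(x,u)$ over the other elements $u$ of $I$ in the cycle of $x$ (via the cyclic-order-preservation of $\pi_{J,I}$ and the a.s.\ convergence of $k_{I_n}/|I_n|$), then identify $\Delta$ with counterclockwise arc length through $\delta(x,u)=X_u-X_x$, which is exactly the paper's observation that $0<\Delta(x,\sigma_I(x))<\Delta(x,y)$ so no point of $C_k\cap\{X_z,\ z\in I\}$ lies on the open arc from $X_x$ to $X_{\sigma_I(x)}$. Your write-up merely makes the combinatorial step and the limit passage more explicit than the paper's compressed proof; no gap.
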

\begin{proof}
If $x$ is a fixed point of $\sigma_I$, then either $X_x \in L$, or the unique $y \in I$ such that $X_y$ is on the same circle as $X_x$ is 
$x$ itself. If $x$ is not a fixed point of $\sigma_I$, then $X_{\sigma_I(x)}$ is on the same circle $C_k$ as $X_x$. Moreover, for any $y \in I$ 
different from $x$ and $\sigma_I(x)$, but in the same cycle of $\sigma_I$, one has $0 < \Delta(x,\sigma_I(x)) < \Delta(x,y)$. Hence, there is no
 point of $C_k \cap \{X_z, z \in I\}$ on the open circle arc coming counterclockwise from $X_x$ to $X_{\sigma_I(x)}$. 
\end{proof}
\noindent
A consequence of Proposition \ref{circles} and \ref{circles2} is the following: 
\begin{corollary} \label{circles3}
Let $E$ be a countable set, $\lambda$ a sequence in the simplex $\Lambda$, $C$ the set constructed in Proposition \ref{circles}, and 
$(X_x)_{x \in E}$ a sequence of i.i.d. random variables, uniform on $C$. For all $I \in \mathcal{F}(E)$, it is almost surely possible to 
define a permutation 
$\sigma_I \in \Sigma_I$, by the construction given in Proposition \ref{circles2}. Moreover, $(\sigma_I)_{I \in E}$ is almost surely a virtual 
permutation, and its distribution is the measure $\mathbb{P}_{\lambda}$ defined in Proposition \ref{loup37}. 
\end{corollary}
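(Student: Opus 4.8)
The plan is to first verify that the prescribed rule almost surely yields a well-defined family of permutations satisfying the compatibility condition of Definition \ref{d1}, and then to identify its law by transporting the content of Propositions \ref{circles} and \ref{circles2} through a measurable-pushforward argument.

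First I would fix the full-measure event on which all the $X_x$, $x \in E$, are pairwise distinct; this event has probability one because $E$ is countable and each $X_x$ has an atomless law on $C$. On this event, for every $I \in \mathcal{F}(E)$ and every $k$ with $\lambda_k > 0$, the set $\{X_z : z \in I\} \cap C_k$ is finite with distinct elements, so ``the first point met counterclockwise just after $X_x$'' is unambiguous; the resulting map is the cyclic permutation of $\{z \in I : X_z \in C_k\}$ induced by the counterclockwise order on $C_k$, while the elements $x$ with $X_x \in L$ are fixed. Hence $\sigma_I \in \Sigma_I$. For the compatibility, I would take $I \subset J$ in $\mathcal{F}(E)$ and $x \in I$: if $X_x \in L$ then $x$ is fixed by both $\sigma_I$ and $\sigma_J$; if $X_x \in C_k$, deleting the elements of $J \setminus I$ from the cycle of $x$ in $\sigma_J$ means iterating $\sigma_J$ until the orbit first returns to $I$, and since $\sigma_J$ advances a point to the next $J$-point counterclockwise on $C_k$, this first return is precisely the first point of $\{X_z : z \in I\} \cap C_k$ reached counterclockwise after $X_x$, i.e. $X_{\sigma_I(x)}$. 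Thus $(\sigma_I)_{I \in \mathcal{F}(E)}$ is almost surely a virtual permutation.

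To identify the law, I would introduce the map $\Phi : C^E \to \Sigma_E$ sending a configuration $(x_z)_{z \in E}$ to the virtual permutation defined by the rule of Proposition \ref{circles2} (defined arbitrarily on the null set where some coordinates coincide); $\Phi$ is measurable because for each finite $J$ and each $x,y \in J$ the event $\{\sigma_J(x) = y\}$ is a Borel function of $(x_z)_{z \in J}$, the counterclockwise order of finitely many distinct points on a circle being Borel. Now, on a possibly enlarged space, let $\sigma$ have law $\mathbb{P}_\lambda$ together with the family $(X_x')_{x \in E}$ furnished by Proposition \ref{circles}: the $X_x'$ are i.i.d. uniform on $C$, and Proposition \ref{circles2} says exactly that $\Phi\big((X_x')_{x \in E}\big) = \sigma$ almost surely. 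Since $(X_x)_{x \in E}$ has the same law as $(X_x')_{x \in E}$ and $\Phi$ is measurable, $\Phi\big((X_x)_{x\in E}\big)$ and $\sigma$ have the same law; but $\Phi\big((X_x)_{x\in E}\big)$ is exactly the virtual permutation of the statement, so its distribution is $\mathbb{P}_\lambda$.

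The only genuinely delicate point is the compatibility check in the second step --- matching the combinatorial operation ``remove $J\setminus I$ from the cycle structure of $\sigma_J$'' with the geometric operation ``continue counterclockwise until an $I$-point is hit''; once this is in place, the remainder is bookkeeping plus the pushforward argument, which is immediate given Propositions \ref{circles} and \ref{circles2}. Alternatively, one could identify the law directly, by checking that the partition of any finite $J$ into ``same circle'' classes plus singletons for $L$-points is Kingman's $\lambda$-paintbox partition restricted to $J$, and that, conditionally on this partition, $\sigma_J$ is uniform among permutations with the corresponding cycle type --- since a uniform cyclic order on $m$ i.i.d. uniform circle points yields a uniform $m$-cycle --- which is precisely the description of the coordinate-$J$ image of $\mathbb{P}_\lambda$ from the proof of Proposition \ref{loup37}.
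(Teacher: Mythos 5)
Your proof is correct and rests on the same pushforward argument the paper uses: the map from configurations $(X_x)_{x\in E}$ to families $(\sigma_I)_{I\in\mathcal F(E)}$ is measurable and deterministic, Propositions \ref{circles} and \ref{circles2} supply a coupling on which its output is a virtual permutation with law $\mathbb P_\lambda$, and equality of the input laws forces equality of the output laws (the event of being a virtual permutation is measurable since $E$ is countable). Your direct verification of the compatibility $\pi_{J,I}(\sigma_J)=\sigma_I$ is a nice sanity check but strictly redundant given the pushforward, which is exactly how the paper dispenses with it; the alternative Kingman-paintbox identification you sketch at the end would be a genuinely different (and somewhat longer) route, but you do not develop it.
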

\begin{proof}
The possibility to define almost surely $\sigma_I$ for all $I \in \mathcal{F}(E)$ comes from the fact that the points $(X_x)_{x \in E}$ are almost
surely pairwise distinct. Moreover, since $(\sigma_I)_{I \in \mathcal{F}(E)}$ is a deterministic function of the sequence of points $(X_x)_{x \in E}$, its 
distribution is uniquely determined by the assumptions of Proposition \ref{circles3}. Since Propositions \ref{circles} and \ref{circles2} give 
a particular setting on which $(\sigma_I)_{I \in \mathcal{F}(E)}$ is a virtual permutation following the distribution $\mathbb{P}_{\lambda}$, we 
are done. 
\end{proof}
\noindent
Proposition \ref{loup37} and Corollary \ref{circles3} give immediately the following description of all the central measures on $\Sigma_E$:
\begin{corollary} \label{circles4}
Let $\nu$ be a probability measure on $\Lambda$, $\lambda$ a random sequence following the distribution $\nu$, and $C$ the random set 
constructed from the sequence $\lambda$ as in Proposition \ref{circles}. Let $(X_x)_{x \in E}$ be a sequence of random points of 
$C$, independent and uniform conditionally on $\lambda$. For all $I \in \mathcal{F}(E)$, it is almost surely possible to 
define a permutation 
$\sigma_I \in \Sigma_I$, by the construction given in Proposition \ref{circles2}. Moreover, $(\sigma_I)_{I \in E}$ is almost surely a virtual 
permutation, and its distribution is the measure $\mathbb{P}_{\nu}$ defined in Proposition \ref{loup37}. 
\end{corollary}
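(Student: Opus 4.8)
The plan is to deduce Corollary~\ref{circles4} from Corollary~\ref{circles3} by a routine conditioning (disintegration) argument with respect to the random sequence $\lambda$, using the mixture representation of $\mathbb{P}_\nu$ supplied by the last item of Proposition~\ref{loup37}.

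First I would check that the construction of $(\sigma_I)_{I\in\mathcal{F}(E)}$ via Proposition~\ref{circles2} is almost surely well-posed. Conditionally on $\lambda$, the set $C$ is a fixed countable union of circles $C_k$ of perimeter $\lambda_k>0$ together with the segment $L$, and the measure $\mu$ has no atoms; since, conditionally on $\lambda$, the variables $(X_x)_{x\in E}$ are i.i.d.\ with law $\mu$ and $E$ is countable, the points $(X_x)_{x\in E}$ are almost surely pairwise distinct, and in particular those lying on a given circle $C_k$ are pairwise distinct. Hence, for each $I\in\mathcal{F}(E)$, the ``first point encountered counterclockwise'' in Proposition~\ref{circles2} is unambiguously defined, so $\sigma_I$ is almost surely well-defined; integrating over the law $\nu$ of $\lambda$, the same holds unconditionally.

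Next I would identify the conditional law of $(\sigma_I)_{I\in\mathcal{F}(E)}$ given $\lambda$. By construction, conditionally on $\{\lambda=\ell\}$ for $\nu$-almost every $\ell\in\Lambda$, the random set $C$ is exactly the deterministic set built from $\ell$ as in Proposition~\ref{circles}, and $(X_x)_{x\in E}$ is a sequence of i.i.d.\ uniform points on that set; this is precisely the hypothesis of Corollary~\ref{circles3}, and $(\sigma_I)_{I\in\mathcal{F}(E)}$ is a fixed deterministic function of $(X_x)_{x\in E}$ through Proposition~\ref{circles2}. Therefore, by Corollary~\ref{circles3}, conditionally on $\{\lambda=\ell\}$ the family $(\sigma_I)_{I\in\mathcal{F}(E)}$ is almost surely a virtual permutation and its conditional distribution is $\mathbb{P}_{\ell}$. (Here one uses that all the measurable spaces involved are standard Borel, so regular conditional distributions exist; alternatively, one may simply build everything on the explicit product space $\Lambda\times C^E$, which makes the statements ``conditionally on $\lambda$'' literal.) Integrating over $\nu$, for every $A\in\mathcal{S}_E$ one obtains
\[
\mathbb{P}[(\sigma_I)_{I\in\mathcal{F}(E)}\in A]=\int_{\Lambda}\mathbb{P}[(\sigma_I)_{I\in\mathcal{F}(E)}\in A\mid\lambda=\ell]\,d\nu(\ell)=\int_{\Lambda}\mathbb{P}_{\ell}(A)\,d\nu(\ell)=\mathbb{P}_{\nu}(A),
\]
where the last equality is the disintegration formula for $\mathbb{P}_\nu$ from the final item of Proposition~\ref{loup37} (which also guarantees that $\ell\mapsto\mathbb{P}_\ell(A)$ is measurable, so the integral makes sense). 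Taking $A=\Sigma_E$ shows that $(\sigma_I)_{I\in\mathcal{F}(E)}$ is almost surely a virtual permutation, and the displayed identity for general $A$ shows that its law is $\mathbb{P}_\nu$, which is the claim.

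I expect the only mildly delicate point to be the rigorous justification of the conditioning step, i.e.\ making precise that ``conditionally on $\lambda$ we are exactly in the situation of Corollary~\ref{circles3}''. Since the underlying spaces are standard Borel and Proposition~\ref{loup37} already provides both the measurability of $\ell\mapsto\mathbb{P}_\ell(A)$ and the mixture representation of $\mathbb{P}_\nu$, this is bookkeeping rather than a genuine obstacle, and the corollary follows at once.
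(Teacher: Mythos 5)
Your proof is correct and follows exactly the route the paper has in mind: the paper offers no written argument, simply asserting that the corollary follows ``immediately'' from Proposition~\ref{loup37} and Corollary~\ref{circles3}, and the disintegration you carry out --- conditioning on $\lambda=\ell$ to land in the setting of Corollary~\ref{circles3}, then integrating against $\nu$ via the mixture formula in the last item of Proposition~\ref{loup37} --- is precisely the intended reasoning. Your additional remark on almost-sure well-posedness (pairwise distinctness of the $X_x$ from atom-freeness of $\mu$) is a correct and worthwhile detail.
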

\noindent 
This construction will be used in the next section, in order to study how  $\sigma$ acts on the completion of the space $E$ with respect to 
a random metric related to $\delta$. 
\section{A flow of transformations on a completion of $E$}  \label{s3}
\noindent
When one looks at the construction of the Ewens measures on the set of virtual permutations which is given at the end of Section \ref{s2}, it is natural to consider the random metric
given by the following proposition:
\begin{proposition} \label{distance}
Let $E$ be a countable set, and $\sigma$ a virtual permutation of $E$, which follows a central distribution. Let $d$ be a random function 
from $E^2$ to $\mathbb{R}_+$, almost surely defined as follows:
\begin{itemize}
\item If $x, y \in E$ and $x \not\sim_{\sigma} y$, then $d(x,y) = 1$;
\item If $x, y \in E$ and $x \sim_{\sigma} y$, then $d(x,y) = \inf\{|a|, \delta(x,y) = a \, (\operatorname{mod. } \lambda(x))\}$.
\end{itemize}
\noindent
Then, $d$ is almost surely a distance on $E$.
\end{proposition}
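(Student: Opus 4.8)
The plan is to fix a single event of probability one on which every almost-sure statement to be invoked holds \emph{simultaneously for all} points of $E$, and then to verify the three axioms of a metric by a pointwise argument on that event. Since $E$ is countable, so are $E^2$ and $E^3$; hence, intersecting over all pairs and triples, we may and do assume that on our event: (i) the almost-sure conclusions of Proposition~\ref{loup37} hold for all $x\in E$ --- in particular $\lambda(x)=0$ exactly when $x$ is a fixed point of $\sigma$, and $\lambda(x)=\lambda(y)$ whenever $x\sim_{\sigma}y$; (ii) the identities $\delta(x,x)=0$, $\delta(x,y)=-\delta(y,x)$ and $\delta(x,y)+\delta(y,z)=\delta(x,z)$ of Proposition~\ref{delta} hold on $\{x\sim_{\sigma}y\sim_{\sigma}z\}$ for all $x,y,z$; and (iii) for all distinct $x,y$ with $x\sim_{\sigma}y$ one has $\Delta(x,y)\in(0,\lambda(x))$, by the corollary following Proposition~\ref{uniform} (equivalently, one may take the event on which the representation of Proposition~\ref{circles} is valid with the points $(X_x)_{x\in E}$ pairwise distinct, so that $\delta(x,y)$ is the class of $X_y-X_x\neq 0$).

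The key elementary device is the ``length'' function $\|\cdot\|_L$ on the group $\mathbb{R}/L\mathbb{Z}$, $L\geq 0$ (with the convention $\mathbb{R}/0\mathbb{Z}=\mathbb{R}$), defined by $\|\gamma\|_L:=\inf\{|a|:a\in\mathbb{R},\ a\equiv\gamma\pmod{L}\}$. One checks at once that $\|{-\gamma}\|_L=\|\gamma\|_L$, that $\|\gamma+\gamma'\|_L\leq\|\gamma\|_L+\|\gamma'\|_L$ (add real representatives realizing the two infima), that $\|\gamma\|_L\leq L/2$, and that for $L>0$ one has $\|\gamma\|_L=0$ if and only if $\gamma=0$ (writing $\gamma$ as the class of $a_0\in[0,L)$, the infimum equals $\min(a_0,L-a_0)$). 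By the definition of $d$, when $x\sim_{\sigma}y$ we have $d(x,y)=\|\delta(x,y)\|_{\lambda(x)}$ --- and since $\lambda(x)=\lambda(y)$ the choice of modulus is irrelevant --- while $d(x,y)=1$ when $x\not\sim_{\sigma}y$; in all cases $d$ takes its values in $[0,1/2]\cup\{1\}$.

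The first two axioms follow quickly. Non-negativity is clear, and $d(x,x)=\|\delta(x,x)\|_{\lambda(x)}=\|0\|_{\lambda(x)}=0$. If $x\neq y$ and $x\not\sim_{\sigma}y$ then $d(x,y)=1\neq 0$; if $x\neq y$ and $x\sim_{\sigma}y$, then $\sigma_{\{x,y\}}$ is the transposition $(x,y)$, so $x$ is not a fixed point of $\sigma$ and hence $\lambda(x)>0$, while $\delta(x,y)$ is the class of $\Delta(x,y)\in(0,\lambda(x))$, hence nonzero in $\mathbb{R}/\lambda(x)\mathbb{Z}$, so $d(x,y)=\|\delta(x,y)\|_{\lambda(x)}>0$; this gives $d(x,y)=0\Leftrightarrow x=y$. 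For symmetry: if $x\not\sim_{\sigma}y$ then $d(x,y)=1=d(y,x)$, and if $x\sim_{\sigma}y$ then $d(y,x)=\|\delta(y,x)\|_{\lambda(y)}=\|{-\delta(x,y)}\|_{\lambda(x)}=\|\delta(x,y)\|_{\lambda(x)}=d(x,y)$.

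It remains to prove the triangle inequality $d(x,z)\leq d(x,y)+d(y,z)$, which is where the additivity of $\delta$ enters and which I expect to be the only step requiring any thought. If $x\sim_{\sigma}y$ and $y\sim_{\sigma}z$, then $x\sim_{\sigma}z$, the three quantities $\lambda(x),\lambda(y),\lambda(z)$ all equal a common value $L$, and $\delta(x,z)=\delta(x,y)+\delta(y,z)$, whence $d(x,z)=\|\delta(x,y)+\delta(y,z)\|_{L}\leq\|\delta(x,y)\|_{L}+\|\delta(y,z)\|_{L}=d(x,y)+d(y,z)$. Otherwise at least one of the pairs $\{x,y\}$, $\{y,z\}$ is not $\sim_{\sigma}$-equivalent; say $x\not\sim_{\sigma}y$, the other case being symmetric. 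Then $x\neq y$, so $d(x,y)=1$, and since $d(x,z)\leq 1$ always, $d(x,z)\leq 1=d(x,y)\leq d(x,y)+d(y,z)$. This exhausts all cases. The only genuine care needed anywhere is bookkeeping: every ``almost surely'' must be absorbed into the single probability-one event before the configuration-by-configuration reasoning, and the degenerate case of a fixed point $x$ (for which $\lambda(x)=0$ and which is $\sim_{\sigma}$-equivalent only to itself) must be checked to reduce to $d(x,x)=0$; beyond that, the substance is just the additivity relation for $\delta$ from Proposition~\ref{delta} combined with the subadditivity of $\|\cdot\|_{L}$, and no estimate is genuinely difficult.
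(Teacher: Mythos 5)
Your proof is correct and follows essentially the same route as the paper's: verify the identity of indiscernibles from $\delta(x,x)=0$ and the almost-sure nonvanishing of $\delta(x,y)$ for distinct equivalent $x,y$, and split the triangle inequality into the case where all three points are equivalent (where additivity of $\delta$ and subadditivity of the quotient-length give the bound) and the case where one of the two pairs is inequivalent (so one of the two summands on the right already equals $1 \geq d(x,z)$). What you add beyond the paper's terse write-up is worthwhile bookkeeping: explicitly fixing a single full-measure event (valid because $E$, $E^2$, $E^3$ are countable) on which all the invoked almost-sure identities hold simultaneously, isolating the elementary length function $\|\cdot\|_L$ and its properties as a lemma, and spelling out why $\delta(x,y)\neq 0$ for $x\neq y$ via the uniformity of $\Delta(x,y)/\lambda(x)$; the paper instead just asserts that $\delta(x,y)=0$ ``holds with strictly positive probability only for $x=y$.'' (Incidentally, the paper's displayed inequality in the first bullet of its triangle-inequality case analysis is a typo --- it reads $d(x,y)+d(y,z)\leq d(x,z)$ where the reverse is meant --- and your version has it the right way around.)
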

\begin{proof}
For all $x \in E$, $\delta(x,x) = 0$ almost surely, which implies that $d(x,x) = 0$. Conversely if $d(x,y) = 0$ for $x, y \in E$, then $x \sim y$ and $\delta(x,y) = 0$, 
which holds with strictly positive probability only for $x = y$. Finally if $x, y, z \in E$, then there are two cases:
\begin{itemize}
\item If $x, y, z$ are not equivalent, then $d(x,y)=1$ or $d(y,z) = 1$, and $d(x,z) \leq 1$, which implies that $d(x,y) + d(y,z) \leq d(x,z)$.
\item If $x \sim y \sim z$, then the triangle inequality holds because $\delta(x,z) = \delta(x,y) + \delta(y,z)$ almost surely. 
\end{itemize}
\end{proof}
\noindent
Once the metric space $(E,d)$ is constructed, it is natural to embed it in another space which is better known.
\begin{proposition} \label{isometry}
Let $E$ be a countable set, let $\sigma$ be a virtual permutation following a central measure, let $(\lambda_k)_{k \geq 1}$ be 
the non-increasing sequence of the asymptotic cycle lengths of $\sigma$, defined in Proposition \ref{loup37}, and let $C$ be 
the random space defined from $(\lambda_k)_{k \geq 1}$ as in Proposition \ref{circles}. Moreover, let us define the random metric $D$ on 
$C$ as follows:
\begin{itemize}
\item If $x, y$ are in $C_k$ for the same value of $k$, then $D(x,y) = \inf \{ |a|,  x-y = a \, (\operatorname{mod. } \lambda_k)\}$.
\item If $x= y \in L$, $D(x,y) = 0$.
\item Otherwise, $D(x,y) = 1$.
\end{itemize}
Then, if the probability space is sufficiently large  to guarantee the existence of the random variables $(X_x)_{x \in E}$ described in 
Proposition \ref{circles}, and if $K := \{ X_x, x \in E\}$, then the application $x \mapsto X_x$ is almost surely a bijective isometry from $(E,d)$ to $(K,D)$. 
\end{proposition}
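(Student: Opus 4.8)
The statement asserts that $x \mapsto X_x$ is a bijective isometry from $(E,d)$ onto $(K,D)$. The plan is to verify the three defining properties in turn: that the map is injective (hence bijective onto its image $K$ by definition of $K$), that it preserves distances, and then to assemble these. Almost all of the content is a bookkeeping exercise using the explicit relations between $\sigma$, $\delta$, and the family $(X_x)_{x\in E}$ established in Proposition \ref{circles}; no genuinely new argument is needed, so the proof will be short.

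First I would record injectivity: since the $(X_x)_{x \in E}$ are i.i.d.\ with a diffuse (uniform) law on $C$, almost surely $X_x \neq X_y$ for $x \neq y$, so $x \mapsto X_x$ is almost surely an injection from $E$ onto $K := \{X_x, x \in E\}$; in particular it is a bijection onto $K$. Next I would prove the isometry property by matching the case distinctions in the definitions of $d$ (Proposition \ref{distance}) and of $D$ (this proposition). Fix distinct $x, y \in E$. By the second bullet of Proposition \ref{circles}, almost surely $x \sim_{\sigma} y$ if and only if $X_x$ and $X_y$ lie on the same circle $C_k$, and in that case $\lambda(x) = \lambda(y) = \lambda_k$. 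Consider the two cases. If $x \not\sim_{\sigma} y$, then by definition $d(x,y) = 1$; on the other side, $X_x$ and $X_y$ are not on a common circle (and they are not equal, and at most one lies in $L$ since they are distinct points), so the ``Otherwise'' clause gives $D(X_x, X_y) = 1$ as well — here one should note that if one of them, say $X_x$, lies in $L$, then $x$ is a fixed point of $\sigma$ by the first bullet of Proposition \ref{circles}, which is consistent with $x \not\sim_\sigma y$. If instead $x \sim_{\sigma} y$, then $X_x, X_y \in C_k$ with $\lambda(x) = \lambda_k$, and by the third bullet of Proposition \ref{circles}, $\delta(x,y) = X_y - X_x \pmod{\lambda(x)}$; plugging this into the definition of $d$ gives
$$
d(x,y) = \inf\{|a| : \delta(x,y) = a \ (\operatorname{mod. } \lambda(x))\} = \inf\{|a| : X_y - X_x = a \ (\operatorname{mod. } \lambda_k)\} = D(X_x, X_y),
$$
which is exactly the first clause in the definition of $D$. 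The case $x = y$ is trivial: $d(x,x) = 0 = D(X_x, X_x)$.

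Finally I would combine the two: the map $x \mapsto X_x$ is almost surely a bijection from $E$ onto $K$ and preserves the metrics, hence is an isometry $(E,d) \to (K,D)$, proving the proposition. I do not expect any real obstacle here; the only point requiring minor care is to make sure the almost-sure exceptional sets (distinctness of the $X_x$, and the almost-sure identities in Propositions \ref{circles}, \ref{delta}, \ref{distance}) are intersected so that all the stated identities hold simultaneously, and that $D$ restricted to $K$ is indeed a metric — but this follows immediately from it being the pullback of $d$, or alternatively can be checked directly exactly as in Proposition \ref{distance}. The role of this proposition is preparatory: it identifies the abstract metric completion of $(E,d)$ with the completion of $(K,D)$ inside the concrete, well-understood space $C$, which is what the next section needs in order to extend the action of $\sigma$ to a flow.
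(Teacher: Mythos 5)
Your proof is correct and follows essentially the same route as the paper: almost sure pairwise distinctness of the $(X_x)_{x\in E}$ gives bijectivity onto $K$, and the isometry follows by matching the case distinctions in the definitions of $d$ and $D$ via the identity $\delta(x,y)=X_y-X_x$ (mod $\lambda(x)$) from Proposition \ref{circles}. Your write-up simply spells out the case analysis that the paper leaves to the reader.
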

\begin{proof}
Since the variables $(X_x)_{x \in E}$ are almost surely pairwise disjoint, the map $\phi : x \mapsto X_x$ is almost surely bijective from
$E$ to $K$. By comparing the definitions of $d$ and $D$ and by using the fact that $\delta(x,y) = X_y - X_x$ for $x \sim y$, we easily see
that $\phi$ is isometric. 
\end{proof}
\noindent 
The isometry defined in Proposition \ref{isometry} gives an intuitive idea on how the completion of $(E,d)$ looks like:
\begin{proposition} \label{extension}
Let us take the assumptions and the notation of Proposition \ref{isometry} and let us define the
 random metric space $(\widehat{E}, \widehat{d})$ as the completion of $(E,d)$. Then
the following properties hold almost surely:
\begin{itemize}
\item The map $\phi$ can be extended in a unique way to a bijective and isometric map $\widehat{\phi}$ from $(\widehat{E}, \widehat{d}\,)$ to $(H,D)$,
where $$H := (K \cap L) \cup \, \bigcup_{k \geq 1} C_k.$$
\item There exists a unique extension $\widehat{\sim}_{\sigma}$ of the equivalence relation $\sim_{\sigma}$ to the set $\widehat{E}$, such that 
the set $\{(x, y) \in \widehat{E}^2, x \, \widehat{\sim}_{\sigma} y\}$ is closed in $\widehat{E}^2$, for the topology induced by the distance $\widehat{d}$.
\item For all $x$, $y$ distinct in $\widehat{E}$, $x \, \widehat{\sim}_{\sigma} y$ if and only if $\widehat{\phi}(x)$ and $\widehat{\phi}(y)$ are on 
a common circle $C_k$, for some $k \geq 1$.
\item There exists a unique map $\widehat{\lambda}$ from $\widehat{E}$ to $[0,1]$, extending $\lambda$ in a continuous way.
\item For all $x \in \widehat{E}$, $k \geq 1$, $\widehat{\lambda}(x) = \lambda_k$ if $\widehat{\phi}(x) \in C_k$ and $\widehat{\lambda}(x) = 0$ if 
$\widehat{\phi}(x) \in L$.
\item There exists a unique map $\widehat{\delta}$ from the set $\{(x, y) \in \widehat{E}^2, x \, \widehat{\sim}_{\sigma} y\}$ to $H$, extending $\delta$ to a continuous map, 
for the topologies induced by the distances $\widehat{d}$ and $D$;
\item For all $x$, $y$ distinct in $\widehat{E}$ such that  $x \, \widehat{\sim}_{\sigma} y$, $\widehat{\delta}(x,y) = \widehat{\phi}(y) - \widehat{\phi}(x)$, modulo
$\lambda_k$, if $\widehat{\phi}(x)$ and $\widehat{\phi}(y)$ are on the circle $C_k$. 
\end{itemize}
\end{proposition}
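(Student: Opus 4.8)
The plan is to identify $(H,D)$, together with the isometric embedding $\phi\colon E\to K\subseteq H$ of Proposition \ref{isometry}, as a completion of $(E,d)$; the bijective isometry $\widehat{\phi}$ is then just the canonical identification of the abstract completion $\widehat{E}$ with $H$, so its existence and uniqueness are automatic and the first item follows. The only substantial point is therefore that $(H,D)$ is complete with $K$ dense in it. For completeness, observe that the members of $\mathcal{D}:=\{C_k:\lambda_k>0\}\cup\{\{p\}:p\in K\cap L\}$ form a partition of $H$, any two points lying in two distinct members are at $D$-distance $1$, each $C_k$ is compact (a circle of perimeter $\lambda_k\le 1$) and each singleton is complete; hence a $D$-Cauchy sequence is eventually contained in one member of $\mathcal{D}$, and so converges. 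For density, recall that conditionally on $(\lambda_k)_{k\ge 1}$ the variables $(X_x)_{x\in E}$ are i.i.d.\ uniform on $C$, so for each $k$ with $\lambda_k>0$ a given $X_x$ lies in $C_k$ with probability $\lambda_k>0$; by independence almost surely infinitely many of them do, and an infinite i.i.d.\ uniform sample on the circle $C_k$ is almost surely dense in $C_k$. Intersecting over the countably many indices $k$, almost surely $K\cap C_k$ is dense in $C_k$ for every $k$, and since $H\cap L=K\cap L$ this makes $K$ dense in $H$.

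Next I transport the circle decomposition of $H$ back to $\widehat{E}$ through $\widehat{\phi}$. Two topological features of $\mathcal{D}$ are used repeatedly: each $C_k$ is clopen in $H$ (the open $D$-ball of radius $1$ around any of its points equals $C_k$), whence $\bigcup_k C_k$ is clopen as well (its complement $K\cap L$ is a discrete closed subset of $H$), and each point of $K\cap L$ is isolated; consequently $\widehat{\phi}^{-1}(C_k)$ and $\widehat{\phi}^{-1}\big(\bigcup_k C_k\big)$ are clopen in $\widehat{E}$, the points of $\widehat{\phi}^{-1}(K\cap L)$ are isolated, and one checks that they are exactly the fixed points of $\sigma$, which already lie in $E$. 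Now set $x\widehat{\sim}_\sigma y$ if $x=y$ or if $\widehat{\phi}(x),\widehat{\phi}(y)$ lie on a common circle $C_k$; set $\widehat{\lambda}(x):=\lambda_k$ if $\widehat{\phi}(x)\in C_k$ and $\widehat{\lambda}(x):=0$ if $\widehat{\phi}(x)\in L$; and, on $\{x\widehat{\sim}_\sigma y\}$, which is the disjoint union of the blocks $\big(\widehat{\phi}^{-1}(C_k)\big)^2$ and the isolated points $(x,x)$ with $\widehat{\phi}(x)\in L$, set $\widehat{\delta}(x,y):=\widehat{\phi}(y)-\widehat{\phi}(x)$ in $\mathbb{R}/\lambda_k\mathbb{Z}\cong C_k$ on the block of index $k$ and $\widehat{\delta}(x,x):=0$ otherwise. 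Pairwise disjointness of the circles makes $\widehat{\sim}_\sigma$ an equivalence relation, and the last three items of Proposition \ref{circles}, read for $x,y\in E$, show that $\widehat{\sim}_\sigma$, $\widehat{\lambda}$ and $\widehat{\delta}$ extend $\sim_\sigma$, $\lambda$ and $\delta$; the value items for $\widehat{\lambda}$ and the formula for $\widehat{\delta}$ then hold by construction. Closedness of $\{x\widehat{\sim}_\sigma y\}$ follows because along a convergent sequence of pairs from this set either the two components coincide infinitely often (forcing the limit on the diagonal) or, after stabilisation of the clopen pieces, the two points sit in a common $C_k$, which is closed. Continuity of $\widehat{\lambda}$ is clear since it is constant on each clopen piece, and continuity of $\widehat{\delta}$ holds because on each block $\big(\widehat{\phi}^{-1}(C_k)\big)^2$ it is $(u,v)\mapsto v-u$ on $C_k^2$ conjugated by $\widehat{\phi}$ (continuous, including on the diagonal, where it vanishes), the remaining points of its domain being isolated in $\widehat{E}^2$.

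It remains to prove the uniqueness statements. For $\widehat{\lambda}$ and $\widehat{\delta}$ this is immediate: $E$ is dense in $\widehat{E}$, and, by the density proved above, $\{(x,y)\in E^2:x\sim_\sigma y\}$ is dense in the closed set $\{x\widehat{\sim}_\sigma y\}$, so a continuous extension of $\lambda$, respectively of $\delta$, is unique. For $\widehat{\sim}_\sigma$, let $\approx$ be any closed equivalence relation on $\widehat{E}$ whose restriction to $E\times E$ is $\sim_\sigma$. If $\widehat{\phi}(x),\widehat{\phi}(y)\in C_k$, pick $a_n,b_n\in E$ with $\widehat{\phi}(a_n)\to\widehat{\phi}(x)$ and $\widehat{\phi}(b_n)\to\widehat{\phi}(y)$ inside $C_k$; then $a_n\sim_\sigma b_n$, so $a_n\approx b_n$, so $x\approx y$ by closedness, and thus $\widehat{\sim}_\sigma\,\subseteq\,\approx$. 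Conversely, suppose $u\approx v$ with $u\not\widehat{\sim}_\sigma v$ (hence $u\neq v$): when $\widehat{\phi}(u)$ and $\widehat{\phi}(v)$ lie on distinct circles $C_k\neq C_{k'}$, choosing $a,b\in E$ with $\widehat{\phi}(a)\in C_k$ and $\widehat{\phi}(b)\in C_{k'}$ gives $a\approx u\approx v\approx b$ with $a\not\sim_\sigma b$, a contradiction; when instead one of them, say $\widehat{\phi}(u)$, lies in $L$, then $\widehat{\phi}(u)\in K\cap L$, so $u\in E$ is a fixed point of $\sigma$, and relating $u$ through $\approx$ to a point of $E$ lying on the circle of $v$ (or to $v$ itself if $v\in E$) forces that point to be $\sim_\sigma$-equivalent to $u$, hence equal to $u$, which again contradicts $u\not\widehat{\sim}_\sigma v$. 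Therefore $\approx\,=\,\widehat{\sim}_\sigma$.

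The genuinely non-formal part of the argument, and the only place where the probabilistic structure is needed, is the density of the i.i.d.\ samples $K\cap C_k$ in the circles $C_k$ and the consequent identification of $(H,D)$ as the completion of $(E,d)$; once this is in hand, the coarse ``distance one between the pieces'' geometry of the metrics $d$ and $D$ reduces all the remaining assertions to routine bookkeeping.
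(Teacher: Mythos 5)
Your proposal is correct and follows essentially the same route as the paper: almost sure density of $K=\{X_x,\,x\in E\}$ in $(H,D)$ (from the conditional i.i.d.\ uniformity) identifies $\widehat{E}$ with $H$, after which $\widehat{\sim}_\sigma$, $\widehat{\lambda}$ and $\widehat{\delta}$ are defined by the explicit descriptions on the circles and their uniqueness follows from density of $E$ in $\widehat{E}$ and of $\{(x,y)\in E^2,\,x\sim_\sigma y\}$ in $\{x\,\widehat{\sim}_\sigma\,y\}$. You merely spell out points the paper leaves implicit -- the completeness of $(H,D)$ via the ``distance one between pieces'' decomposition, and the transitivity argument showing a closed equivalence-relation extension cannot be strictly larger than $\widehat{\sim}_\sigma$ -- which is a welcome tightening of the paper's one-line uniqueness claim rather than a different method.
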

 \begin{proof}
 Since, conditionally on $(\lambda_k)_{k \geq 1}$ the variables $(X_x)_{x \in E}$ are independent and uniform on $C$, the 
 space $(K,D)$ is almost surely dense in $(H,D)$, which implies the first item.
 The description of the third item proves the existence of $\widehat{\sim}_{\sigma}$: its uniqueness comes from the fact that
  $\sim_{\sigma}$ is defined on a dense subset of $\widehat{E}^2$. Similarly, the existence of $\widehat{\lambda}$ is deduced from the description given 
  in the fifth item, and its uniqueness comes from the density of $E$ in $\widehat{E}$. Finally,
  the uniqueness of $\widehat{\delta}$ comes from the density of 
 the set $\{x, y \in E, x \, \sim_{\sigma} y\}$ in the set $\{(x, y) \in \widehat{E}^2, x \, \widehat{\sim}_{\sigma} y\}$, and its existence is due to
 the continuity, for all $k \geq 1$, of the map $(x,y) \mapsto \widehat{\phi}(y) - \widehat{\phi}(x)$ from 
 $\{(x, y) \in \widehat{E}^2, \widehat{\phi}(x), \widehat{\phi}(y) \in C_k\}$ to $\mathbb{R}/ \lambda_k \mathbb{Z}$. 
 \end{proof}              
 \begin{remark}
 If $L$ is empty, which happens if and only if $\sum_{k \geq 1} \lambda_k = 1$ (for example under Ewens measure of any parameter), then $H$ is equal to $C$. Otherwise,
 $H$ is the union of the circles included in $C$, and a discrete countable set.       
 \end{remark}                                                                                          
 \noindent
From now, the notations $\widehat{\phi}$, $\widehat{\lambda}$, $\widehat{\delta}$,  $\widehat{\sim}_{\sigma}$, will be replaced by 
$\phi$, $\lambda$, $\delta$, $\sim_{\sigma}$ (or $\sim$), since this simplification is consistent with the previous notation.       
Note that the map $\phi$ is not determined by $\sigma$, since it depends to the choice of the variables $(X_x)_{x \in E}$ in Proposition \ref{circles}, which is 
not unique in general (for example, if $\lambda_1 > 0$ almost surely, and if $a \in \mathbb{R}$ is fixed, then one can replace each point $X_x \in C_1$ by the unique point
 $X'_x \in C_1$ such that $X'_x - X_x = a$, modulo $\lambda_1$). However, as stated in Proposition
\ref{extension}, the extensions of $\sim$, $\lambda$ and $\delta$ are almost surely uniquely determined. We now have  all the ingredients needed to construct the flow 
of transformations on $\widehat{E}$ indicated in the title of this section. 
\begin{proposition} \label{alpha496}
With the notation above and the assumptions of Proposition \ref{isometry}, there exists almost surely a 
unique family $(S^{\alpha})_{\alpha \in \mathbb{R}}$ of bijective isometries of the set $\widehat{E}$, such 
that for all $x \in \widehat{E}$, $S^{\alpha}(x) \sim x$ and in the case where $x$ is not a fixed point 
of $\sigma$, $\delta(x, S^{\alpha}(x)) = \alpha$ modulo $\lambda(x)$. Almost surely, this family is given 
as follows: 
\begin{itemize}
\item If $x$ is a fixed point of $\sigma$, then $S^{\alpha}(x) = x$. 
\item If $x$ is not a fixed point of $\sigma$, then $S^{\alpha}(x)$ is the unique point of $\widehat{E}$ 
such that $\phi(S^{\alpha}(x))$ is on the same circle as $\phi(x)$, and $\phi(S^{\alpha}(x)) - \phi(x) = \alpha$ modulo $\lambda(x)$. 
\end{itemize}
\noindent
Moreover, almost surely, $S^{\alpha+\beta} = S^{\alpha}S^{\beta}$ for all $\alpha, \beta \in \mathbb{R}$.
\end{proposition}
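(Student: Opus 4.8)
The plan is to define the family $(S^{\alpha})_{\alpha \in \mathbb{R}}$ explicitly through the isometry $\phi$ of Proposition \ref{extension}, and then to check that it satisfies the required relations and that no other family can. Throughout, I would work on the almost sure event on which the conclusions of Proposition \ref{extension} hold, so that $\phi$ is a bijective isometry from $(\widehat{E}, \widehat{d}\,)$ onto $(H,D)$ with $H = (K \cap L) \cup \bigcup_{k \geq 1} C_k$, and on which the extended relation $\sim$, the extended function $\lambda$, and the extended cocycle $\delta$ obey the descriptions given there. On this event everything below is deterministic once $\phi$ is fixed, so the ``for all $\alpha$'' and ``for all $\alpha, \beta$'' parts of the statement raise no measurability issue.

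First I would introduce the rotation map $R^{\alpha} \colon H \to H$, defined by $R^{\alpha}(u) = u$ if $u \in K \cap L$ and, if $u \in C_k$, by letting $R^{\alpha}(u)$ be the unique point of $C_k$ with $R^{\alpha}(u) - u = \alpha$ modulo $\lambda_k$. This is a bijection of $H$ with inverse $R^{-\alpha}$: it maps each circle $C_k$ onto itself and fixes the discrete part $K \cap L$ pointwise. It is also a $D$-isometry, since on each circle $C_k$ the metric $D$ is the rotation-invariant arc distance, while $D = 1$ between two points lying on different pieces of $H$ and $R^{\alpha}$ preserves the piece to which a point belongs. I would then set $S^{\alpha} := \phi^{-1} \circ R^{\alpha} \circ \phi$, which is therefore a bijective isometry of $(\widehat{E}, \widehat{d}\,)$, and observe that it agrees with the explicit formula in the statement: if $x$ is a fixed point of $\sigma$ (that is, $\lambda(x) = 0$, equivalently $\phi(x) \in L$, so in fact $\phi(x) \in K \cap L$) then $S^{\alpha}(x) = x$, while otherwise $\phi(x) \in C_k$ with $\lambda_k = \lambda(x)$, and $\phi(S^{\alpha}(x)) = R^{\alpha}(\phi(x))$ is the point of $C_k$ with $\phi(S^{\alpha}(x)) - \phi(x) = \alpha$ modulo $\lambda(x)$.

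Next I would verify the two defining relations. The relation $S^{\alpha}(x) \sim x$ is the trivial $x \sim x$ when $x$ is a fixed point; otherwise $\phi(x)$ and $\phi(S^{\alpha}(x))$ lie on the common circle $C_k$, so $S^{\alpha}(x) \sim x$ by the characterization of $\sim$ on $\widehat{E}$ in Proposition \ref{extension}. When $x$ is not a fixed point, the same proposition gives $\delta(x, S^{\alpha}(x)) = \phi(S^{\alpha}(x)) - \phi(x) = \alpha$ modulo $\lambda(x)$, as required. Uniqueness then follows at once: if $(\widetilde{S}^{\alpha})_{\alpha \in \mathbb{R}}$ is any family of bijective isometries of $\widehat{E}$ with these two properties, then for a fixed point $x$ the relation $\widetilde{S}^{\alpha}(x) \sim x$ forces $\widetilde{S}^{\alpha}(x) = x$, because the $\sim$-class of a point with $\phi(x) \in L$ reduces to $\{x\}$ (again by Proposition \ref{extension}); and for $x$ not a fixed point, $\widetilde{S}^{\alpha}(x) \sim x$ together with $\delta(x, \widetilde{S}^{\alpha}(x)) = \alpha$ modulo $\lambda(x)$ forces $\phi(\widetilde{S}^{\alpha}(x))$ to lie on the same circle $C_k$ as $\phi(x)$ and to satisfy $\phi(\widetilde{S}^{\alpha}(x)) - \phi(x) = \alpha$ modulo $\lambda_k$, which determines it uniquely; since $\phi$ is injective, $\widetilde{S}^{\alpha} = S^{\alpha}$.

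Finally, the flow property follows from the corresponding identity for rotations: $R^{\alpha+\beta} = R^{\alpha} \circ R^{\beta}$ on each circle $C_k$ (angles of a circle add) and trivially on $K \cap L$, hence $S^{\alpha+\beta} = \phi^{-1} R^{\alpha+\beta} \phi = (\phi^{-1} R^{\alpha} \phi) \circ (\phi^{-1} R^{\beta} \phi) = S^{\alpha} S^{\beta}$ for all $\alpha, \beta$, on the single almost sure event fixed at the outset. The only step needing some care — the nearest thing to an obstacle in an otherwise routine argument — is the bookkeeping around fixed points: one must check that ``fixed point of $\sigma$'' is to be read on $\widehat{E}$ via $\lambda$ (equivalently $\phi(x) \in L$), that these points are exactly the points of $K \cap L$, and that their $\sim$-class is a singleton, so that both the explicit construction and the uniqueness argument are coherent there; all of this is supplied by Proposition \ref{extension}.
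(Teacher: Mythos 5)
Your proof is correct and takes essentially the same route as the paper: define $S^{\alpha}$ by transporting a rotation of each circle in $H$ back through the isometry $\phi$, then verify the defining relations, uniqueness, and the flow law. The paper phrases this as ``the condition $C(x,y,\alpha)$ determines $y$ uniquely'' rather than introducing the rotation map $R^{\alpha}$ explicitly, but the content is the same; your extra care around fixed points (that $\phi(x)\in K\cap L$ and the $\sim$-class is a singleton) is a detail the paper leaves implicit.
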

\begin{proof}
For all $x, y \in \widehat{E}$, $\alpha \in \mathbb{R}$, let us denote by $C(x,y,\alpha)$ the
 condition described as follows:
\begin{itemize}
\item If $x$ is a fixed point of $\sigma$, then $y=x$.
\item If $x$ is not a fixed point of $\sigma$, then $x \sim y$ and $\phi(y)- \phi(x) = \alpha$, 
modulo $\lambda(x)$.
\end{itemize}
\noindent
It is clear that the condition $C(x,y,\alpha)$ determines uniquely $y$ once $x$ and $\alpha$ are 
fixed, which proves that almost surely, the map $S^{\alpha}$ from $\widehat{E}$ to $\widehat{E}$ 
is well-defined for all $\alpha \in \mathbb{E}$, and that its explicit description is given in
Proposition \ref{alpha496}. By using this description, it is immediate to deduce 
that for all $\alpha, \beta \in \mathbb{R}$, $S^{\alpha}$ is isometric
 and $S^{\alpha+\beta} = S^{\alpha}S^{\beta}$. In particular,
 $S^{\alpha}S^{-\alpha} = S^0$ is the identity
 map of $\widehat{E}$, and then $S^{\alpha}$ is bijective.
\end{proof} 
\noindent
Now, as written in the introduction, the flow $(S^{\alpha})_{\alpha \in \mathbb{R}}$ can be
 seen as the limit, for large sets $I \in \mathcal{F}(E)$, of a power of $\sigma_I$, with exponent
 approximately equal to $\alpha |I|$. The following statement gives a rigorous meaning of this 
idea. 
\begin{proposition} \label{convergence}
Let us take the assumptions and the notation above, and let us define a 
sequence $(\alpha_n)_{n \geq 1}$ in $\mathbb{R}$, such that $\alpha_n/n$ tends to a limit
 $\alpha$ when $n$ goes to infinity. Then, for all $\epsilon > 0$, there exist
 $C(\epsilon), c(\epsilon) > 0$ such that for all $I \in \mathcal{F}(E)$:
$$\mathbb{P} \left[   \exists x \in I, \, d( \sigma_I^{\alpha_{|I|}} (x), S^{\alpha} (x)) \geq 
\epsilon \right] \leq C(\epsilon) e^{- c(\epsilon)|I|}.$$
 \end{proposition}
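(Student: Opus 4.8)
The plan is to use the explicit description of $\sigma_I$ from Proposition \ref{circles2} and of $S^{\alpha}$ from Proposition \ref{alpha496}, both in terms of the i.i.d.\ uniform points $(X_x)_{x \in E}$ on $C$. Fix $I \in \mathcal{F}(E)$ with $|I| = n$. For $x \in I$ with $\phi(x) \in C_k$, applying $\sigma_I$ once means moving counterclockwise on $C_k$ to the next point of $\{X_z : z \in I\}$; so $\sigma_I^{m}(x)$ is reached after $m$ such jumps. The point $S^{\alpha}(x)$ is the point at counterclockwise distance $\alpha$ (mod $\lambda_k$) from $\phi(x)$. Thus $d(\sigma_I^{\alpha_n}(x), S^{\alpha}(x))$ is small precisely when the total counterclockwise arc length traversed by $\alpha_n$ jumps is within $\epsilon$ of $\alpha$ modulo $\lambda_k$. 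The whole estimate therefore reduces to a statement about the spacings between consecutive points of an i.i.d.\ uniform sample on each circle $C_k$ (and the fact that points in $L$ are fixed by both $\sigma_I$ and $S^{\alpha}$, so contribute nothing).

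**Key steps.** First I would condition on the partition of $I$ into the sets $I_k := \{x \in I : \phi(x) \in C_k\}$ and $I_L := \{x \in I : \phi(x) \in L\}$; on $I_L$ there is nothing to prove, and the circles are independent, so it suffices to handle one circle $C_k$ with, say, $p_k := |I_k|$ points on it. On $C_k$, conditionally on $p_k$, the points $\{\phi(z) : z \in I_k\}$ are i.i.d.\ uniform on a circle of perimeter $\lambda_k$; starting from $\phi(x)$ and jumping counterclockwise, each jump covers a spacing, and after $m$ jumps the arc covered (mod $\lambda_k$) is $m\lambda_k/p_k$ in expectation, with fluctuations controlled by a sum of $\min(m,p_k)$ essentially i.i.d.\ spacing increments. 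The core estimate is: for $m = \alpha_n$ jumps, the displacement $\Theta_m$ along $C_k$ satisfies $\Theta_m \equiv (m/p_k)\lambda_k \pmod{\lambda_k}$ up to a fluctuation, and $(m/p_k)\lambda_k \approx (\alpha_n/n) \cdot (n/p_k)\lambda_k \to \alpha$ since $p_k/n \to \lambda_k$ a.s.\ by Proposition \ref{loup37} and $\alpha_n/n \to \alpha$. To get the exponential bound I would (i) invoke a large-deviation estimate showing $p_k \geq c n$ for each relevant $k$ except with probability $e^{-c'n}$ — this follows from the convergence $|I_k|/n \to \lambda_k$ being quantifiable via Azuma/Hoeffding on the i.i.d.\ indicators $\mathbf 1_{\phi(z)\in C_k}$; (ii) on that event, apply Hoeffding's inequality to the sum of spacings (or equivalently to the order statistics of the uniform sample) to show $|\Theta_{\alpha_n} - (\alpha_n/p_k)\lambda_k| \geq \epsilon/2$ has probability $\leq e^{-c''n}$; and (iii) bound, deterministically for $n$ large, $|(\alpha_n/p_k)\lambda_k - \alpha| < \epsilon/2$ on the event from (i) together with the a.s.\ asymptotics. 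A union bound over the finitely many circles $C_k$ with $\lambda_k$ not too small, and a separate crude bound absorbing the (few) tiny circles and the tail $\sum_{k : \lambda_k < \eta}$, then yields the claimed $C(\epsilon)e^{-c(\epsilon)|I|}$ after taking a union bound over the $|I| \le$ (something) choices of $x$; note the union over $x \in I$ costs only a polynomial factor, absorbed into the exponential.

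**Main obstacle.** The delicate point is handling the circles $C_k$ with very small $\lambda_k$ (or even the infinitely many of them): on such a circle the number of sample points $p_k$ may be $O(1)$ or $0$, so the law-of-large-numbers heuristic for spacings fails and $\Theta_{\alpha_n}$ need not track $\alpha$ at all. The resolution is that this failure is irrelevant: if $\lambda_k < \epsilon$ then \emph{every} point of $C_k$ is within $D$-distance $\epsilon$ of $\phi(x)$ and of $\phi(S^\alpha(x))$, so $d(\sigma_I^{\alpha_{|I|}}(x), S^\alpha(x)) \le \lambda_k < \epsilon$ automatically. Hence one only needs the quantitative argument on the finitely many circles with $\lambda_k \ge \epsilon$, and on each of those $p_k/n \to \lambda_k \ge \epsilon$, so $p_k$ is genuinely linear in $n$ with exponentially good probability. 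A secondary technical nuisance is that $\alpha_n$ may exceed $p_k$, so the $\alpha_n$ jumps wrap around the circle many times; but since we only care about $\Theta_{\alpha_n}$ modulo $\lambda_k$, wrapping is harmless, and the fluctuation of $\Theta_{\alpha_n}$ is still governed by a sum of at most $p_k$ independent spacing contributions (writing $\alpha_n = q p_k + r$, the $q$ full loops contribute exactly $q\lambda_k \equiv 0$, leaving $r \le p_k$ genuine spacings), so Hoeffding applies cleanly with the right linear-in-$n$ variance proxy.
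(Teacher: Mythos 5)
Your proposal follows essentially the same route as the paper: circular representation via the i.i.d.\ points $(X_x)$, Bernoulli concentration of the circle occupancies (the paper's $|A|$ around $\lambda(x)|I|$), concentration of the post-$\alpha_{|I|}$-jump displacement around $\alpha$, the observation that circles of perimeter $<\epsilon$ are handled trivially since the distance is bounded by the perimeter (the paper encodes this as the filter $\lambda(x)\geq\epsilon$ inside each of its three probability terms), and a linear union bound over $x\in I$. The only point you should tighten is the displacement concentration: Hoeffding does not apply directly to the spacings, which are dependent (they sum to $\lambda_k$); the paper instead reads off the exact beta law of the relevant arc (the paper's $b(|A|,\alpha_{|I|})$ differs from $[\alpha_{|I|}/(|A|+1)]$ by a $\operatorname{Beta}$ variable with both parameters of order $|I|$) and uses its exponential tail, which is what your parenthetical about order statistics should resolve to.
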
 
 \begin{proof}
  Let us suppose $\alpha> 0$. Let $x \in E$, let $I$ be a finite subset of $E$ containing $x$, and let $A$
be the set of $y \in I$ which are equivalent to $x$ but different from $x$. Then, by
 Proposition \ref{uniform}, conditionally on $A$, and on the event where this set is not empty,
 the variables $\delta(x,y)/\lambda(x)$ for
 $y \in A$ are independent, uniform on $\mathbb{R}/\mathbb{Z}$ and they from of
family which is independent of $\lambda(x)$. Moreover, if $(b_k)_{k \in \mathbb{Z}}$ denotes 
the increasing family of the reals in the class of $\delta(x,y)/\lambda(x)$ modulo $1$, for 
some $y \in A \cup \{x\}$, with $b_0$ equal to zero, then $\delta(x,\sigma_I^{\alpha_{|I|}}(x))/\lambda(x)$
 is the class of $b_{\alpha_{|I|}}$ modulo $1$. One deduces that
conditionally on $|A|$, this cardinality being different from zero, 
 $$\delta(S^{\alpha}(x),\sigma_I^{\alpha_{|I|}}(x)) = \lambda(x) b(|A|, \alpha_{|I|}) - \alpha,$$
where $(b(|A|, k))_{k \in \mathbb{Z}}$ is  a $1$-periodic random increasing family of reals, independent of $\lambda(x)$, and such that its elements in $[0,1)$
 are $b(|A|,0) = 0$, and $|A|$ independent variables, uniform on $(0,1)$. One deduces that for 
$|A| \geq 1$, 
 $$d(S^{\alpha}(x),\sigma_I^{\alpha_{|I|}}(x)) \leq \left| 
\lambda(x) b(|A|, \alpha_{|I|}) - \alpha \right| \wedge \lambda(x).$$
Moreover, one has obviously, in any case:
$$d(S^{\alpha}(x),\sigma_I^{\alpha_{|I|}}(x))  \leq \lambda(x).$$
 One deduces, for all $\epsilon > 0$:
$$
 \mathbb{P} \left[ d(S^{\alpha}(x),\sigma_I^{\alpha_{|I|}}(x)) \geq \epsilon \right]
 \leq \mathbb{P} \left[ \left| \frac{\alpha_{|I|}}{|I|} - \alpha \right|  \geq \epsilon/3 \right]  \nonumber
  + \mathbb{P} \left[ \lambda(x) \geq \epsilon, \, \left| \frac{\lambda(x) \alpha_{|I|}}{|A|} - \frac{\alpha_{|I|}}{|I|} \right| \geq \epsilon/3 \right] \nonumber
 $$
\begin{equation}
+ \, \mathbb{P} \left[ \lambda(x) \geq \epsilon, |A| \geq 1, \,  \left| \frac{\lambda(x) \alpha_{|I|}}{|A|} - \alpha \right| \leq 2 \epsilon/3, 
  \,  \lambda(x) \left|  b(|A|, \alpha_{|I|}) - \frac{\alpha_{|I|}}{|A|} \right| \geq \epsilon/3 \right].
 \label{S}
 \end{equation}
 \noindent
In \eqref{S}, the event involved in the second term of the sum is always supposed to occur if $A$ is 
empty. Now, for $|I|$ large enough,
\begin{equation}
\mathbb{P} \left[ \left| \frac{\alpha_{|I|}}{|I|} - \alpha \right|  \geq \epsilon/3 \right]  = 0, 
\label{nnbar1}
\end{equation}
since $\alpha_n/n$ is deterministic and tends to $\alpha$ when $n \geq 1$ goes to infinity. Moreover, 
\begin{align*}
\mathbb{P} \left[ \lambda(x) \geq \epsilon, \,  \left| \frac{\lambda(x) \alpha_{|I|}}{|A|} - \frac{\alpha_{|I|}}{|I|} \right| \geq \epsilon/3 \,   \right] & 
= \mathbb{P} \left[ \lambda(x) \geq \epsilon, \, \left| |A| - \lambda(x) |I| \, \right| \geq  \frac{\epsilon  |A| |I|}{3 |\alpha_{|I|}|} \right] \\ & 
\leq \mathbb{P} \left[  \lambda(x) \geq \epsilon,  \, \left| |A| - \lambda(x) |I| \, \right| \geq  \frac{\epsilon^2 |I|^2}{6 |\alpha_{|I|}|} \right] \\ 
& + \mathbb{P} \left[ \lambda(x) \geq \epsilon, \, |A| \leq |I| \epsilon/2 \right]  \\ & 
\leq 2 \, \mathbb{P} \left[  \left| |A| - \lambda(x) |I| \, \right| \geq  \frac{\epsilon^2 |I|}{6 \alpha + 2 \epsilon + 1} \right] ,
\end{align*}
\noindent
if $|I|$ is large enough (depending only on the sequence $(\alpha_n)_{n \geq 1}$). Now, conditionally on $\lambda(x)$, $|A|$ is the sum of $|I|-1$ independent
 Bernoulli random variables, with parameter $\lambda(x)$. Hence, there exist $c_1, c_2 >0$, depending only on $(\alpha_n)_{n \geq 1}$ and $\epsilon$, 
 such that:
 \begin{equation}
\mathbb{P} \left[ \lambda(x) \geq \epsilon, \,  \left| \frac{\lambda(x) \alpha_{|I|}}{|A|} - \frac{\alpha_{|I|}}{|I|} \right| \geq \epsilon/3 \,   \right] \leq 
 c_1 e^{-c_2 |I|}. \label{nnbar2}
\end{equation}
 In order to evaluate the last term of \eqref{S}, let us observe that if the corresponding event holds, then for $|I|$ large enough,
 $\alpha_{|I|} > |I| \alpha/2$, which implies:
 \begin{equation}
 |A| \geq  \frac{\lambda(x) \alpha_{|I|}}{\alpha + 2 \epsilon/3} \geq \frac{\epsilon \alpha |I|}{ 2 (\alpha + \epsilon)}. \label{kd28}
 \end{equation}
 Moreover, one has:
 $$ b(|A|, \alpha_{|I|}) = \beta + [\alpha_{|I|}/(|A|+1) ],$$
  where the brackets denote the integer part, and where, conditionally on $|A|$, $\beta$ is a beta random variable of parameters
  $k := \alpha_{|I|} - (|A|+1) [\alpha_{|I|}/(|A|+1) ] $ and $(|A|+1) - k$. One deduces that, conditionally on $|A|$,
  the probability that $|b(|A|, \alpha_{|I|}) - \alpha_{|I|}/(|A|+1)|$ is greater than or equal to $\epsilon/6$ decreases
  exponentially with $|A|$, independently of $\alpha_{|I|}$. Moreover, for $|I|$ large enough, by \eqref{kd28}:
  $$\left| \frac{\alpha_{|I|}}{|A|} -  \frac{\alpha_{|I|}}{|A| +1} \right| \leq \frac{\alpha_{|I|}}{|A|^2} \leq \frac{ 5 (\alpha+\epsilon)^2}
  { \epsilon^2 \alpha |I|} \leq \epsilon/7.$$
  One deduces that there exist $c_3, c_4 >0$, depending only on $(\alpha_n)_{n \geq 1}$ and $\epsilon$, such that: 
  \begin{equation}
\mathbb{P} \left[ \lambda(x) \geq \epsilon,  \,  \left| \frac{\lambda(x) \alpha_{|I|}}{|A|} - \alpha \right| \leq 2 \epsilon/3, 
  \,  \lambda(x) \left|  b(|A|, \alpha_{|I|}) - \frac{\alpha_{|I|}}{|A|} \right| \geq 
\epsilon/3 \right] \leq c_3 e^{-c_4} |I|, \label{nnbar3}
\end{equation}
  By \eqref{nnbar1}, \eqref{nnbar2}, \eqref{nnbar3}, there exist $c_5, c_6 > 0$ such that 
  $$\mathbb{P} \left[ d(S^{\alpha}(x),\sigma_I^{\alpha_{|I|}}(x)) \geq \epsilon \right] \leq c_5 e^{-c_6 |I|}.$$
  By adding these estimates for all $x \in I$, one deduces Proposition \ref{convergence} for $\alpha> 0$. The proof 
  is exactly similar for $\alpha < 0$. Now, let $(\alpha_n)_{n \geq 1}$ and 
  $(\beta_n)_{n \geq 1}$ be two sequences such that $\alpha_n/n$ and $\beta_n/n$ tend to $1$. Then,
  \begin{align*}
  \sup_{x \in I} d(x, \sigma_I^{\alpha_{|I|} - \beta_{|I|}}(x)) & 
  \leq \sup_{x \in I} d(x, S^{1} (\sigma_I^{ - \beta_{|I|}}(x))) +
  \sup_{x \in I} d(S^{1} (\sigma_I^{ - \beta_{|I|}}(x)), \sigma_I^{\alpha_{|I|} - \beta_{|I|}}(x)) \\ 
  & \leq \sup_{x \in I} d(S^{-1}(x), \sigma_I^{ - \beta_{|I|}}(x)) + 
  \sup_{x \in I} d(S^{1} (x), \sigma_I^{\alpha_{|I|}}(x)),
  \end{align*}
  \noindent
  since $S^{1}$ is an isometry of $\widehat{E}$ and $\sigma_I^{ - \beta_{|I|}}$ is a bijection of $|I|$. One deduces that Proposition \ref{convergence} holds also for $\alpha = 0$.
 \end{proof}         
 \begin{corollary}
 With the assumptions of Proposition \ref{convergence}, if $|I|$ goes to infinity, then the supremum of $d( \sigma_I^{\alpha_{|I|}} (x), S^{\alpha} (x))$ for $x \in I$ converges to zero 
 in probability, in $L^p$ for all $p \in [1, \infty)$, and almost surely along any deterministic, strictly increasing sequence $(I_n)_{n \geq 1}$ of sets in $\mathcal{F}(E)$. In particular, if 
 $(x_n)_{n \geq 1}$ is a random sequence of elements in $E$, such that $x_n \in I_n$ for all $n \geq 1$ and $x_n$ converges almost surely 
 to a random limit $x \in \widehat{E}$ when $n$ goes to infinity (this situation holds if $x \in E$ and $x_n = x$ for $n$ large enough), then $\sigma_{I_n}^{\alpha_{|I_n|}} (x_n)$ converges almost surely to $S^{\alpha}(x)$ when $n$ goes to infinity. 
  \end{corollary}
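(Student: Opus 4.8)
The plan is to extract everything from the uniform exponential estimate of Proposition~\ref{convergence}. Write
$$M_I := \sup_{x \in I} d\bigl(\sigma_I^{\alpha_{|I|}}(x),\, S^{\alpha}(x)\bigr),$$
where, as already in Proposition~\ref{convergence}, $d$ is understood as the extension $\widehat{d}$ of the metric to $\widehat{E}$ whenever $S^{\alpha}(x)\notin E$. Then $M_I$ is a genuine random variable --- a maximum over the finite set $I$ --- taking values in $[0,1]$, since $\widehat{d}$ is bounded by $1$, and Proposition~\ref{convergence} asserts that for every $\epsilon>0$ there are constants $C(\epsilon),c(\epsilon)>0$, independent of $I$, with $\mathbb{P}[M_I\geq\epsilon]\leq C(\epsilon)e^{-c(\epsilon)|I|}$.

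First I would record convergence in probability: for fixed $\epsilon>0$ the bound $C(\epsilon)e^{-c(\epsilon)|I|}$ tends to $0$ as $|I|\to\infty$, which is exactly the statement that $M_I\to 0$ in probability. For $L^p$ convergence with $p\in[1,\infty)$, I would split
$$\mathbb{E}[M_I^p]\leq\epsilon^p+\mathbb{P}[M_I\geq\epsilon]\leq\epsilon^p+C(\epsilon)e^{-c(\epsilon)|I|},$$
using $M_I\leq 1$; letting $|I|\to\infty$ and then $\epsilon\downarrow 0$ gives $\mathbb{E}[M_I^p]\to 0$. For almost sure convergence along a deterministic strictly increasing sequence $(I_n)_{n\geq 1}$ of sets in $\mathcal{F}(E)$, I would use that the cardinalities form a strictly increasing sequence of nonnegative integers, so $|I_n|\geq n-1$; hence for each fixed $\epsilon>0$, $\sum_{n\geq 1}\mathbb{P}[M_{I_n}\geq\epsilon]\leq C(\epsilon)\sum_{n\geq 1}e^{-c(\epsilon)(n-1)}<\infty$, and the Borel--Cantelli lemma gives $\limsup_n M_{I_n}\leq\epsilon$ almost surely. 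Applying this to $\epsilon=1/m$ for $m\geq 1$ and intersecting the countably many full-measure events yields $M_{I_n}\to 0$ almost surely.

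For the final assertion, suppose $x_n\in I_n$ for all $n$ and $x_n\to x$ almost surely in $\widehat{E}$. By the triangle inequality for $\widehat{d}$,
$$\widehat{d}\bigl(\sigma_{I_n}^{\alpha_{|I_n|}}(x_n),\,S^{\alpha}(x)\bigr)\leq\widehat{d}\bigl(\sigma_{I_n}^{\alpha_{|I_n|}}(x_n),\,S^{\alpha}(x_n)\bigr)+\widehat{d}\bigl(S^{\alpha}(x_n),\,S^{\alpha}(x)\bigr).$$
Since $x_n\in I_n$, the first term is at most $M_{I_n}$, which tends to $0$ almost surely by the previous paragraph; since $S^{\alpha}$ is an isometry of $\widehat{E}$ (Proposition~\ref{alpha496}), the second term equals $\widehat{d}(x_n,x)$, which tends to $0$ almost surely by hypothesis. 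On the intersection of these two full-measure events we obtain $\sigma_{I_n}^{\alpha_{|I_n|}}(x_n)\to S^{\alpha}(x)$ almost surely, the case $x\in E$ with $x_n=x$ eventually being a trivial instance.

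I do not expect a genuine obstacle here: the analytic content is entirely in Proposition~\ref{convergence}, and the corollary is a standard combination of the definition of convergence in probability, a truncation bound for the $L^p$ norms, and the Borel--Cantelli lemma together with the isometry property of $S^{\alpha}$. The only points demanding a little care are the elementary observation $|I_n|\geq n-1$ that makes the Borel--Cantelli series summable, and keeping track of the fact that the relevant metric is the completed metric $\widehat{d}$, since $S^{\alpha}(x)$ (and even $S^{\alpha}(y)$ for $y\in E$) need not lie in $E$.
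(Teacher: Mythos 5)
Your proposal is correct and follows essentially the same route as the paper's own (much terser) proof: convergence in probability straight from the exponential bound of Proposition \ref{convergence}, $L^p$ convergence from boundedness of the distance by one, and almost sure convergence via Borel--Cantelli, with the final claim handled by the triangle inequality and the isometry of $S^{\alpha}$. Your write-up merely supplies the routine details (the $\epsilon$-truncation for $L^p$, the summability from $|I_n|\geq n-1$) that the paper leaves implicit.
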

  \begin{proof}
  The convergence in probability is directly implied by Proposition \ref{convergence}, and
 it implies convergence in $L^p$ for all $p \in [1, \infty)$, since the distance is bounded by one. 
  The almost sure convergence is proven by using Borel-Cantelli lemma. 
  \end{proof}                                                                    
  \noindent
  We have now constructed a flow of transformations of $\widehat{E}$ and we have related it in a rigorous way to the iterations of $\sigma_I$ for large sets $I$. In Section
  \ref{s4}, we interpret this flow as a flow of operators on a suitable random functional
 space, and we construct its infinitesimal generator, as discussed in the introduction. 
 \section{A flow of operators on a random functional space} \label{s4}
 Let us now define the random functional space on which the flow of operators described below acts.
 We first take, as before, a random virtual permutation $\sigma$ on a countable set $E$, which 
follows a central probability measure.
By the results given in the previous sections, the following events hold almost surely:
 \begin{itemize}
 \item The variables $\lambda(x)$ are well-defined, strictly 
positive for all $x \in E$ which are not fixed points of $\sigma$, and 
$\lambda(x) = \lambda(y)$ for all $x, y \in E$ such that $x \sim_{\sigma} y$.
 \item With the definitions of Proposition \ref{loup37}, the 
non-increasing sequence $(\lambda_k)_{k \geq 1}$ of the cycle lengths is an element of 
 the simplex $\Lambda$.
 \item The quantity $\delta(x,y)$ exists for all $x, y \in E$ such that $x \sim_{\sigma} y$,
 and $\delta(x,x) = 0$, $\delta(x,y) = - \delta(y,x)$,
 $\delta(x,y) + \delta(y,z) = \delta(x,z)$ for all $x, y, z$ such that 
$x \sim_{\sigma} y \sim_{\sigma} z$.
 \item There exists a bijective isometry $\phi$ from $E$ to a dense subset of 
$$H := L_0 \cup \, \bigcup_{k \geq 1, \lambda_k > 0} C_k,$$
where $C_k$ is a circle of perimeter $\lambda_k$, the set $L_0$ is
empty if $\sum_{k \geq 1} \lambda_k = 1$ and countable if $\sum_{k \geq 1} \lambda_k < 1$,
and the union is disjoint,
 for the distances $d$ and $D$ defined similarly as in Propositions \ref{distance} and \ref{isometry}.
 \item The map $\lambda$, the distance $d$ and the equivalence
 relation $\sim_{\sigma}$ extend in a unique continuous way to the completed space $\widehat{E}$
 of $E$, for the distance $d$.
\item The map $\delta$ extends in a unique continuous way to the set 
$\{ (x,y) \in \widehat{E}^2, x \sim_{\sigma} y \}$.
 \item The isometry $\phi$ extends in a unique way to a bijective isometry from $(\widehat{E},d)$ to $(H,D)$.
\item For all distinct $x, y \in \widehat{E}$, $x \sim_{\sigma} y$ if and only if $\phi(x)$ and $\phi(y)$ are
 on the same circle included in $H$. 
\item For all distinct $x, y \in \widehat{E}$ such that $x \sim_{\sigma} y$,
 $\delta(x,y) = \phi(y) - \phi(x)$, modulo $\lambda(x)$.
 \item There exists a unique flow $(S^{\alpha})_{\alpha \in \mathbb{R}}$ of isometric bijections 
of $H$ such that for all $\alpha \in \mathbb{R}$, 
$x \in \widehat{E}$, $\phi(S^{\alpha}(x)) -\phi(x) = \alpha$ modulo $\lambda(x)$ if $x$ 
is not a fixed point of $\sigma$, and $S^{\alpha}(x) = x$ if $x$ is a fixed point of $\sigma$.
\item For all $\alpha, \beta \in \mathbb{R}$, $S^{\alpha}S^{\beta} = S^{\alpha + \beta}$.
 \end{itemize}
 From now, let us fix $\sigma$ such that all the items above are satisfied: no randomness is involved in the construction of the operator $U$ given below. 
 We can interpret the flow $(S^{\alpha})_{\alpha \in \mathbb{R}}$ as a flow of operators
 on a functional space defined on $E$. The first step in the corresponding construction
is the following result:
  \begin{proposition} \label{51}
Let $f$ be a function from $E$ to $\mathbb{C}$. If $f$ can be extended to a continuous function from $\widehat{E}$ to $\mathbb{C}$, then this extension is unique:
in this case we say that $f$ is continuous. For example, if $f$ is uniformly continuous from $E$ to $\mathbb{C}$, then it is continuous and its continuous extension to $\widehat{E}$ is 
also uniformly continuous.
 \end{proposition}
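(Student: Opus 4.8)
The plan is to invoke the two standard facts about (uniformly) continuous maps defined on a dense subset of a metric space, applied to the embedding of $(E,d)$ into its completion $(\widehat E,\widehat d)$ (written simply $(\widehat E,d)$ after the identifications fixed above). Neither part is difficult; the work is purely metric and uses only that $\mathbb C$ is complete and that $E$ is dense in $\widehat E$.

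For the uniqueness assertion I would argue as follows. By construction $\widehat E$ is the completion of $(E,d)$, so $E$ is dense in $\widehat E$. If $g_1,g_2\colon \widehat E\to\mathbb C$ are both continuous and both restrict to $f$ on $E$, then the set $\{x\in\widehat E : g_1(x)=g_2(x)\}$ is the preimage of $\{0\}$ under the continuous map $g_1-g_2$, hence closed, and it contains the dense set $E$; therefore it is all of $\widehat E$, i.e. $g_1=g_2$. This justifies speaking of \emph{the} continuous extension and calling such an $f$ continuous.

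For the example, assume $f$ uniformly continuous on $E$. Given $\hat x\in\widehat E$, pick (by density) a sequence $(x_n)_{n\ge 1}$ in $E$ with $x_n\to\hat x$; it is Cauchy in $E$, and uniform continuity makes $(f(x_n))_{n\ge 1}$ Cauchy in $\mathbb C$, hence convergent since $\mathbb C$ is complete. Define $\widehat f(\hat x)$ to be this limit. Well-definedness follows by interleaving two sequences converging to $\hat x$ into a single one, whose $f$-image then converges, forcing the two limits to agree; taking the constant sequence $x_n=x$ shows $\widehat f$ extends $f$. Uniform continuity of $\widehat f$ is the usual $\epsilon/3$ argument: given $\epsilon>0$, choose $\eta>0$ with $|f(a)-f(b)|\le \epsilon/3$ whenever $a,b\in E$ and $d(a,b)\le\eta$; for $\hat x,\hat y\in\widehat E$ with $d(\hat x,\hat y)\le\eta/3$, approximate $\hat x,\hat y$ by $a,b\in E$ close enough that $d(a,b)\le\eta$ and $|\widehat f(\hat x)-f(a)|,|\widehat f(\hat y)-f(b)|\le\epsilon/3$, and conclude $|\widehat f(\hat x)-\widehat f(\hat y)|\le\epsilon$.

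I do not anticipate any genuine obstacle: the statement is a routine consequence of completeness of $\mathbb C$ together with density of $E$ in $\widehat E$. The only point that needs even a trivial check is that the underlying metric data are as claimed — that $E$ really sits densely in $\widehat E$ — and this is immediate, since $\widehat E$ was defined in Proposition \ref{extension} precisely as the completion of $(E,d)$.
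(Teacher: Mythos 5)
Your proof is correct, and it is exactly the standard completion argument (uniqueness from density of $E$ in $\widehat{E}$ plus closedness of the agreement set, existence and uniform continuity of the extension via Cauchy sequences, completeness of $\mathbb{C}$, and the $\epsilon/3$ estimate) that the paper itself relies on: it states Proposition \ref{51} without proof as an immediate consequence of $\widehat{E}$ being the completion of $(E,d)$. So you have simply supplied the routine details the paper omits, with no divergence in approach.
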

 \noindent
 When a function from $E$ to $\mathbb{C}$ is continuous, we can use its extension to $\widehat{E}$ in order to make the flow $(S^{\alpha})_{\alpha \in \mathbb{R}}$
 acting on it, as follows:
\begin{proposition} \label{y6}
In the setting above, one can define a unique flow $(T^{\alpha})_{\alpha \in \mathbb{R}}$ of linear operators on the space of continuous functions from $E$ to $\mathbb{C}$, satisfying 
the following properties:
\begin{itemize}
\item For any continuous function $f$ from $E$ to $\mathbb{C}$, $T^{\alpha} (f) (x) = \widehat{f} (S^{\alpha} (x))$ for all $\alpha \in \mathbb{R}$, where $\widehat{f}$ is the continuous extension of $f$ to $\widehat{E}$;
\item For all $\alpha, \beta \in \mathbb{R}$, $T^{\alpha + \beta} = T^{\alpha}T^{\beta} $.
\end{itemize} 
\end{proposition}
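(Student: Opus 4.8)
The plan is to take the stated formula as the \emph{definition} of $T^{\alpha}$ and then check that it makes sense and has the required algebraic properties; the uniqueness assertion will then be automatic, since the first bullet point of the proposition determines $T^{\alpha}$ completely.

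First I would verify well-definedness, i.e.\ that for a continuous $f : E \to \mathbb{C}$ and any $\alpha \in \mathbb{R}$, the function $x \mapsto \widehat{f}(S^{\alpha}(x))$ on $E$ is again continuous in the sense of Proposition \ref{51}. Here $\widehat{f} : \widehat{E} \to \mathbb{C}$ is the unique continuous extension of $f$, and $S^{\alpha}$ is the bijective isometry of $\widehat{E}$ produced in Proposition \ref{alpha496}; being an isometry with isometric inverse $S^{-\alpha}$, it is a homeomorphism of $(\widehat{E}, d)$. Hence $\widehat{f} \circ S^{\alpha}$ is a continuous function on $\widehat{E}$, and its restriction to $E$ is a continuous function from $E$ to $\mathbb{C}$ whose unique continuous extension to $\widehat{E}$ is precisely $\widehat{f} \circ S^{\alpha}$. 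Thus $T^{\alpha}(f) := (\widehat{f} \circ S^{\alpha})\big|_{E}$ is a well-defined element of the space of continuous functions, and moreover $\widehat{T^{\alpha}(f)} = \widehat{f} \circ S^{\alpha}$. Linearity of $T^{\alpha}$ then follows from the linearity of $f \mapsto \widehat{f}$ (an immediate consequence of the uniqueness of the continuous extension) together with the linearity of precomposition by $S^{\alpha}$.

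Next I would check the flow identity. For a continuous $f$ and $x \in E$,
\[
T^{\alpha}(T^{\beta}(f))(x) = \widehat{T^{\beta}(f)}(S^{\alpha}(x)) = \widehat{f}\bigl(S^{\beta}(S^{\alpha}(x))\bigr) = \widehat{f}\bigl(S^{\alpha+\beta}(x)\bigr) = T^{\alpha+\beta}(f)(x),
\]
where the second equality uses the identity $\widehat{T^{\beta}(f)} = \widehat{f} \circ S^{\beta}$ established above, and the third uses $S^{\beta}S^{\alpha} = S^{\alpha+\beta}$, which is the relation $S^{\alpha}S^{\beta} = S^{\alpha+\beta}$ of Proposition \ref{alpha496} together with the commutativity of the flow. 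In particular $T^{0} = \mathrm{Id}$, since $S^{0}$ is the identity of $\widehat{E}$, so each $T^{\alpha}$ is invertible with inverse $T^{-\alpha}$, and $(T^{\alpha})_{\alpha \in \mathbb{R}}$ is genuinely a flow of linear operators.

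Finally, uniqueness is immediate: any family of operators satisfying the first bullet point must equal the one just constructed. I do not expect a real obstacle here; the only step requiring a little care is the identity $\widehat{T^{\beta}(f)} = \widehat{f} \circ S^{\beta}$, which rests on Proposition \ref{51} (uniqueness of continuous extensions) and on $S^{\beta}$ being a homeomorphism of $\widehat{E}$ — both facts already available from the preceding sections.
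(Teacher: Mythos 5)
Your proposal is correct and follows essentially the same route as the paper: define $T^{\alpha}(f)$ as the restriction of $\widehat{f}\circ S^{\alpha}$ to $E$, use that $S^{\alpha}$ is an isometry of $\widehat{E}$ to get continuity, derive linearity from uniqueness of continuous extensions, and prove the flow identity via $\widehat{T^{\beta}(f)} = \widehat{f}\circ S^{\beta}$ together with $S^{\beta}S^{\alpha}=S^{\alpha+\beta}$. The paper's proof is the same argument, with linearity phrased through the relation $h = f + rg$ and its extensions.
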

\noindent
\begin{proof}
Let $f$, $g$, $h$ be three continuous functions from $E$ to $\mathbb{C}$, such that $h = f + rg$ for some $r \in \mathbb{C}$. 
Let $\widehat{f}$, $\widehat{g}$ and $\widehat{h}$ be their continuous extensions to $\widehat{E}$. By uniqueness, one has
$\widehat{h} = \widehat{f} + r \widehat{g}$. Let us now define, for $\alpha \in \mathbb{R}$, 
three functions $f_{\alpha}$, $g_{\alpha}$ and $h_{\alpha}$ from $\widehat{E}$ to $\mathbb{C}$, by:
$$f_{\alpha}(x)  =  \widehat{f}(S^{\alpha} (x)),$$
$$g_{\alpha}(x)  =  \widehat{g}(S^{\alpha} (x)),$$
and
$$h_{\alpha}(x)  =  \widehat{h}(S^{\alpha} (x)),$$
for all $x \in \widehat{E}$. Since $S^{\alpha}$ is an isometry of $\widehat{E}$, the functions $f_{\alpha}$, $g_{\alpha}$ and $h_{\alpha}$ are continuous 
and satisfy: $h_{\alpha} = f_{\alpha} + r g_{\alpha}$. By taking their restrictions to $E$, one
 deduces the existence and the linearity of an operator $T^{\alpha}$, 
from the space of continuous functions on $E$ to itself, which satisfies the first item. Of course this operator is uniquely determined. Now, let $\beta \in \mathbb{R}$.
One has, for all $x \in E$:
$$T^{\alpha} T^{\beta} (f)(x)= \widehat{T^{\beta}(f)}(S^{\alpha} (x)),$$
and then,
$$T^{\alpha} T^{\beta} (f)(x)= \widehat{f}(S^{\beta} S^{\alpha}(x)) = \widehat{f}(S^{\alpha+\beta}(x)) = T^{\alpha+\beta}(f)(x).$$
\end{proof}
\noindent
 Let us now define the space of continuously differentiable functions with respect
 to the flow of operators $(T_{\alpha})_{\alpha \in \mathbb{R}}$:
 \begin{definition} \label{babel} 
 In the setting above, let $f$ be a continuous function from $E$ to $\mathbb{C}$. We say that $f$ is continuously differentiable, if and only if there exists
 a continuous function $Uf$ from $E$ to $\mathbb{C}$, necessarily unique, such that for all $x \in \widehat{E}$,
 $$\frac{\widehat{T^{\alpha}(f)}(x) - \widehat{f}(x)}{\alpha} \underset{\alpha \rightarrow 0}{\longrightarrow} \widehat{Uf}(x),$$
 where $\widehat{f}$, $\widehat{T^{\alpha}(f)}$ and $\widehat{Uf}$ are the continuous extensions of $f$, $T^{\alpha}(f)$ and $Uf$ to $\widehat{E}$. 
 \end{definition}
 \noindent 
The following result is immediate:
\begin{proposition}
The application $f \mapsto Uf$ from the space of continuously differentiable functions
 to the space of continuous functions from $E$ to $\mathbb{C}$, constructed  
in Definition \ref{babel}, is a linear operator. 
\end{proposition}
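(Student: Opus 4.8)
The plan is to reduce everything to the linearity of the operators $T^{\alpha}$ established in Proposition \ref{y6}, together with the uniqueness of continuous extensions from $E$ to $\widehat{E}$ recorded in Proposition \ref{51}. First I would fix two continuously differentiable functions $f,g$ from $E$ to $\mathbb{C}$ and a scalar $r \in \mathbb{C}$, and set $h := f + rg$. Being a linear combination of continuous functions, $h$ is continuous, with continuous extension $\widehat{h} = \widehat{f} + r\widehat{g}$: indeed $\widehat{f} + r\widehat{g}$ is a continuous function on $\widehat{E}$ whose restriction to $E$ is $h$, and such an extension is unique by Proposition \ref{51}.

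Next I would pass to the difference quotients. Since $T^{\alpha}$ is linear, $T^{\alpha}(h) = T^{\alpha}(f) + rT^{\alpha}(g)$ as functions on $E$, and again by uniqueness of the continuous extension, $\widehat{T^{\alpha}(h)} = \widehat{T^{\alpha}(f)} + r\widehat{T^{\alpha}(g)}$ on $\widehat{E}$. Hence, for every $x \in \widehat{E}$ and every $\alpha \neq 0$,
$$\frac{\widehat{T^{\alpha}(h)}(x) - \widehat{h}(x)}{\alpha} = \frac{\widehat{T^{\alpha}(f)}(x) - \widehat{f}(x)}{\alpha} + r\,\frac{\widehat{T^{\alpha}(g)}(x) - \widehat{g}(x)}{\alpha}.$$
By Definition \ref{babel}, as $\alpha \to 0$ the two terms on the right converge pointwise on $\widehat{E}$ to $\widehat{Uf}(x)$ and $r\,\widehat{Ug}(x)$ respectively, so the left-hand side converges to $\widehat{Uf}(x) + r\,\widehat{Ug}(x)$. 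Since $Uf + rUg$ is continuous on $E$ (a linear combination of continuous functions) and its continuous extension is exactly $\widehat{Uf} + r\widehat{Ug}$, Definition \ref{babel} now shows that $h$ is continuously differentiable with $Uh = Uf + rUg$.

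This argument simultaneously proves that the space of continuously differentiable functions is stable under linear combinations, hence is a vector space, and that $f \mapsto Uf$ is linear on it; taking $r=1$ and $g$ the zero function (resp. $f$ the zero function and $r$ arbitrary) recovers additivity (resp. homogeneity) as special cases, so nothing further is required. There is essentially no analytic difficulty here: the only point that needs even a word of justification is the compatibility of the continuous extension with linear operations, which is immediate from the uniqueness in Proposition \ref{51}, and I would simply take care to invoke that uniqueness each time an extension of a sum is replaced by the sum of the extensions.
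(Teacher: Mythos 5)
Your argument is correct: it is exactly the natural route via the linearity of $T^{\alpha}$ (Proposition \ref{y6}) and the uniqueness of continuous extensions (Proposition \ref{51}), and it properly handles the one point worth noting, namely that the space of continuously differentiable functions is itself stable under linear combinations. The paper simply declares this result immediate and gives no proof, so your write-up supplies the intended argument.
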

\noindent
Once the operator $U$ is defined, it is natural to study its eigenfunctions and eigenvalues. 
The following result holds:
\begin{proposition} \label{eigenfunctions}
The eigenvalues of $U$ are zero, and all the nonzero multiples of $2 i \pi / \lambda_k$ for 
$\lambda_k > 0$. The corresponding eigenspaces are described as follows:
\begin{itemize}
\item The space corresponding to the eigenvalue zero consists of all the functions $f$ from 
$E$ to $\mathbb{C}$ 
such that $x \sim_{\sigma} y$ implies $f(x) = f(y)$.
\item The space corresponding to the eigenvalue $ai$ for $a \in \mathbb{R}^*$ consists of 
all the functions $f$ such that $f(x)= 0$ if $\lambda(x) = 0$ or $\lambda(x)$ is not a multiple
of $2 \pi / a$, and such that for $\lambda(x)$ nonzero and divisible by $2 \pi/a$,
 the restriction of 
$f$ to the equivalence class of $x$ for $\sim_{\sigma}$ is proportional to 
$y \mapsto e^{ai \delta(x,y)}$.
\end{itemize}
\noindent
Consequently, the dimension of the space corresponding to the eigenvalue zero is equal to 
the number of indices $k \geq 1$ such that $\lambda_k > 0$ if $\sum_{k \geq 1} \lambda_k = 1$, 
and to infinity if $\sum_{k \geq 1} \lambda_k < 1$. Moreover, for $a \in \mathbb{R}^*$, the 
dimension of the space corresponding to the eigenvalue $ia$ is equal to the number of indices 
$k \geq 1$ such that $\lambda_k$ is a nonzero multiple of $2 \pi/a$, in particular it is finite.
\end{proposition}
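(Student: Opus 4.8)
The plan is to unwind the definitions until $U$ becomes the ordinary differentiation operator on a disjoint family of circles, and then to read off its spectrum.

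\textbf{Step 1: a componentwise description of $U$.} Via the isometry $\phi$ I would identify $\widehat{E}$ with the set $H = L_0 \cup \bigcup_{k\geq 1,\ \lambda_k>0} C_k$ described above. The first thing to check is that any two distinct points which do not lie on a common circle $C_k$ are at distance $1$; hence each $C_k$ is open (a $\frac12$-ball around any of its points stays inside it) and closed, each point of $L_0$ is isolated, and $\widehat{E}$ carries the coproduct topology of this decomposition. Consequently a function on $\widehat{E}$ is continuous if and only if it is arbitrary on $L_0$ and continuous on each $C_k$, with no condition coupling distinct components. For $k$ with $\lambda_k>0$, fix a base point $x_k\in C_k$ and use the coordinate $t=\delta(x_k,\,\cdot\,)\in\mathbb{R}/\lambda_k\mathbb{Z}$; by the properties of $S^{\alpha}$ recorded above (Proposition \ref{alpha496}), $S^{\alpha}$ acts on $C_k$ as the rotation $t\mapsto t+\alpha$ and fixes $L_0$ pointwise. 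For a continuous $f$, the map $x\mapsto\widehat f(S^{\alpha}(x))$ is continuous on $\widehat E$ and agrees with $T^{\alpha}(f)$ on $E$, so by the uniqueness in Proposition \ref{51} it is $\widehat{T^{\alpha}(f)}$; thus $\widehat{T^{\alpha}(f)}$ equals $\widehat f$ on $L_0$ and equals $t\mapsto\widehat f(t+\alpha)$ on each $C_k$. It follows that $f$ is continuously differentiable in the sense of Definition \ref{babel} if and only if $\widehat f|_{C_k}$ is of class $C^{1}$ for every $k$, and then $\widehat{Uf}$ is $0$ on $L_0$ and $\frac{d}{dt}\widehat f|_{C_k}$ on each $C_k$. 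In short, up to the identification by $\phi$, the operator $U$ acts blockwise as the zero operator on functions on $L_0$ and as $\frac{d}{dt}$ on $C^{1}(\mathbb{R}/\lambda_k\mathbb{Z})$ for each $k$.

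\textbf{Step 2: solving $Uf=\mu f$.} Assume $Uf=\mu f$ with $f$ not identically $0$. On $L_0$ the equation reads $\mu f(x)=0$; on $C_k$ it reads $(\widehat f|_{C_k})'=\mu\,\widehat f|_{C_k}$, whose solutions on $\mathbb{R}$ are $t\mapsto c_k e^{\mu t}$, and such a solution is single valued on $\mathbb{R}/\lambda_k\mathbb{Z}$ exactly when $e^{\mu\lambda_k}=1$, i.e.\ $\mu\lambda_k\in 2i\pi\mathbb{Z}$, and otherwise $c_k=0$. Since $f\neq0$ there is a $k$ with $\lambda_k>0$ and $c_k\neq0$, so $\mu$ is $0$ or a nonzero integer multiple of $2i\pi/\lambda_k$; in particular $\mu\in i\mathbb{R}$. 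Writing $\mu=ai$ with $a\in\mathbb{R}^{*}$, the condition $\mu\lambda_k\in 2i\pi\mathbb{Z}$ says precisely that $\lambda_k$ is a multiple of $2\pi/a$; on such a circle $f$ is proportional to $t\mapsto e^{ait}$, that is, undoing the coordinate and recalling that $\delta(x_k,y)$ is only defined modulo $\lambda(x_k)$, to $y\mapsto e^{ai\delta(x_k,y)}$ (and this does not depend on the representative), while $f$ vanishes on every other circle and, $\lambda$ being $0$ there, on $L_0$. In the case $\mu=0$ the equation forces $\widehat f|_{C_k}$ constant and puts no restriction on $f|_{L_0}$, which is exactly the condition that $x\sim_{\sigma}y$ imply $f(x)=f(y)$. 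This yields both descriptions of the eigenspaces.

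\textbf{Step 3: converse and dimensions.} Conversely, each function $f$ displayed in the statement is continuous on $\widehat E$ by Step 1 --- for the eigenvalue $0$ because it is constant on each circle, and for the eigenvalue $ai$ because $e^{ait}$ is single valued on $C_k$ exactly when $\lambda_k$ is a multiple of $2\pi/a$, $f$ being $\equiv 0$ on the remaining components --- and the blockwise formula for $U$ gives $Uf=\mu f$ by inspection (so every claimed eigenvalue is indeed attained). It remains to count free parameters. For $\mu=0$ there is one parameter per circle and one per point of $L_0$, so the eigenspace has dimension $\#\{k:\lambda_k>0\}$ when $L_0=\emptyset$, i.e.\ when $\sum_k\lambda_k=1$, and infinite dimension when $\sum_k\lambda_k<1$. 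For $\mu=ai$ there is one parameter per circle $C_k$ with $\lambda_k$ a nonzero multiple of $2\pi/a$, and only finitely many such $k$ occur, since each corresponding $\lambda_k$ lies in $(0,1]\cap\frac{2\pi}{|a|}\mathbb{Z}$ while $\sum_k\lambda_k\leq 1$.

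The one point that needs real care is Step 1: one must verify that the completion $\widehat E$ genuinely splits the cycles into clopen pieces, so that neither the class of continuous functions nor the operator $U$ hides any ``global'' constraint linking different cycles, and one must make sure that $\widehat{T^{\alpha}f}=\widehat f\circ S^{\alpha}$ holds on all of $\widehat E$ and not just on $E$. Once $U$ has been identified with a direct sum of copies of $\frac{d}{dt}$ on circles of perimeters $\lambda_k$ plus a zero block, Steps 2 and 3 are the classical spectral computation for $\frac{d}{dt}$ on a circle together with elementary bookkeeping.
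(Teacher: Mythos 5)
Your argument is correct and is essentially the paper's, in a cleaner packaging: both reduce to solving $g' = \mu g$ on a circle of perimeter $\lambda_k$ and using periodicity to force $e^{\mu\lambda_k} = 1$ (or $g\equiv 0$). The paper does this pointwise, fixing $x$ and showing that $g_x(\alpha) := \widehat{f}(S^{\alpha}(x))$ satisfies $g_x' = b g_x$; you instead first establish the clopen decomposition of $\widehat{E}$ (correctly noting, and verifying, that $\widehat{T^{\alpha}(f)} = \widehat{f}\circ S^{\alpha}$ on all of $\widehat{E}$, a fact the paper uses implicitly in the identity $g_x(\alpha+\beta)=\widehat{T^{\beta}(f)}(S^{\alpha}(x))$) and identify $U$ with a direct sum of $\frac{d}{dt}$ on the circles plus a zero block on $L_0$. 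That reframing makes the converse and the dimension count in your Step 3 immediate, whereas the paper dispenses with the converse via ``it is easy to check.'' One small slip in Step 2: the assertion ``since $f\neq 0$ there is a $k$ with $\lambda_k>0$ and $c_k\neq 0$'' fails when $f$ is supported entirely on $L_0$; in that case the equation $\mu f(x)=0$ with some $f(x)\neq 0$ directly gives $\mu=0$, so the conclusion about the possible eigenvalues is unaffected, but you should cover that case explicitly.
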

\begin{proof}
Let $f$ be an eigenfunction of $U$ for an eigenvalue $b \in \mathbb{C}$, and let
  $\widehat{f}$ be its extension to $\widehat{E}$. One has $Uf = bf$, and 
then by continuity, $\widehat{Uf} = b \widehat{f}$. For all $x \in E$, 
let $g_x$ be the function from $\mathbb{R}$ to $\mathbb{C}$, given by:
$$g_x(\alpha) := \widehat{f}(S^{\alpha}(x)).$$
For all $\alpha, \beta \in \mathbb{R}$,
$$g_x(\alpha + \beta) = \widehat{f}(S^{\alpha}S^{\beta} (x)) = 
\widehat{T^{\beta}(f)}(S^{\alpha}(x)),$$
and then, for $\beta \neq 0$,
$$\frac{g_x(\alpha + \beta) - g_x(\alpha)}{\beta} = \frac{\widehat{T^{\beta}(f)} (S^{\alpha}(x))
- \widehat{f}(S^{\alpha}(x)) }{\beta} \underset{\beta \rightarrow 0}{\longrightarrow}
\widehat{Uf}(S^{\alpha}(x)) = b \, \widehat{f}(S^{\alpha}(x)) = b \, g_x(\alpha).$$ 
Hence, $g_x$ is continuously differentiable and satisfies the differential equation
$g_x'= b \, g_x$, which implies that $g_x$ is proportional to 
the function $\alpha \rightarrow e^{b \, \alpha}$. Since $S^{\alpha}(x) = x$ for 
$\alpha = \lambda(x)$, and for all $\alpha \in \mathbb{R}$ if $\lambda(x) = 0$, 
 one has $\lambda(x) > 0$ and $e^{b \, \lambda(x)} = 1$, $b = 0$ and 
$g_x$ constant, or $g_x$ identically zero. Therefore, one of the 
three following possibilities holds for all $x \in E$:
\begin{itemize}
\item $f$ is identically zero on the cycle of $x$.
\item $b= 0$ and $f$ is constant on the cycle of $x$.
\item $\lambda(x) > 0$, $b$ is multiple
of $2 i \pi/ \lambda(x)$ and the restriction of $f$ to the cycle of $x$ is 
proportional to $y \mapsto e^{b \, \delta(x,y)}$. 
\end{itemize}
\noindent
Conversely, it is easy to check that any function which satisfies one of the three items above 
for all $x \in E$ is an eigenfunction of $U$ for the eigenvalue $b$, which completes the 
proof of Proposition \ref{eigenfunctions}.
\end{proof}
\noindent
As discussed before, the operator $T^{\alpha}$ can be viewed as a limit of $\sigma_I^{\alpha_{|I|}}$ for
 large $I \in \mathcal{F}(E)$ and 
$\alpha_{|I|}$ equivalent to $\alpha |I|$. It is then natural to relate
 the permutation $\sigma_I$ to the operator $T^{1/|I|}$, and then the operator 
$|I| (\sigma_I - \operatorname{Id})$ to $|I| (T^{1/|I|} - \operatorname{Id})$, where $\sigma_I$ is
 identified with a permutation matrix. Now, the eigenvalues of 
$|I| (\sigma_I - \operatorname{Id})$ are equal to $|I| (e^{i \kappa} - 1)$, where $\kappa$ is an eigenangle
 of $\sigma_I$, and this quantity
is expected to be close to $i \kappa |I|$, on the other hand, $|I| (T^{1/|I|} - \operatorname{Id})$ is 
expected to be close to $U$. Hence, it is natural to compare the renormalized eigenangles 
of $\sigma_I$ (i.e. multiplied by $i |I|$), and the eigenvalues of $U$ computed 
in Proposition \ref{eigenfunctions}. The rigorous statement corresponding to this idea is 
the following:
\begin{proposition} \label{eigenangles}
Let $\sigma$ be a random virtual permutation on a countable set $E$, following
 a central measure. Let $X$ be the 
set of the eigenvalues of the random operator $iU$ (which is almost surely well-defined), and for 
$I \in \mathcal{F}(E)$, let $X_I$ be the set of the eigenangles of $\sigma_I$, multiplied by $|I|$. If
 $\gamma \in X$ (resp. $\gamma \in X_I$), let $m(\gamma)$ (resp. $m_I(\gamma)$) be the multiplicity 
of the corresponding eigenvalue (resp. rescaled eigenangle). Then $X$ and $X_I$, $I 
\in \mathcal{F}(E)$, are 
included in $\mathbb{R}$, and for all continuous functions $f$ from
$\mathbb{R}$ to $\mathbb{R}_+$, with compact support, the following convergence holds:
 $$\sum_{\gamma \in X_I} m_I(\gamma) f(\gamma) 
\underset{|I| \rightarrow \infty}{\longrightarrow} \sum_{\gamma \in X} m(\gamma) f(\gamma),$$
in probability, and almost surely along any fixed, strictly increasing sequence of sets in $\mathcal{F}(E)$.
\end{proposition}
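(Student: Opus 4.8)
The plan is to prove Proposition \ref{eigenangles} by combining a deterministic spectral computation with a cycle-length limit theorem. Concretely, I would (i) use Proposition \ref{eigenfunctions} to identify the set $X$ of eigenvalues of $iU$, together with its multiplicities, with the limit point process $X_{\infty}$ described in the introduction, and then (ii) establish (or quote from \cite{NNp}) that the rescaled eigenangle process $X_I$ of $\sigma_I$ converges to $X_{\infty}$ in the sense of the statement.

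For step (i), Proposition \ref{eigenfunctions} tells us that the eigenvalues of $U$ are $0$ and the nonzero multiples of $2i\pi/\lambda_k$ for those $k$ with $\lambda_k>0$; multiplying by $i$ shows that $X$ consists of $0$ and the nonzero multiples of $2\pi/\lambda_k$, so $X\subset\mathbb{R}$, and since $\sigma_I$ is a real permutation matrix $X_I\subset\mathbb{R}$ as well. Reading off the multiplicities from the same proposition: a nonzero $\gamma\in X$ has multiplicity equal to the number of $k$ with $\lambda_k>0$ such that $\gamma$ is a nonzero multiple of $2\pi/\lambda_k$, and $0\in X$ has multiplicity $\#\{k:\lambda_k>0\}$ if $\sum_k\lambda_k=1$ and $+\infty$ if $\sum_k\lambda_k<1$. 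This is precisely the description of $X_{\infty}$ recalled in the introduction, so $X=X_{\infty}$ with multiplicities, and for $f\in C_c(\mathbb{R},\mathbb{R}_+)$ one has $\sum_{\gamma\in X}m(\gamma)f(\gamma)=\sum_{k:\lambda_k>0}\sum_{m\in\mathbb{Z}}f(2\pi m/\lambda_k)+\bigl(m(0)-\#\{k:\lambda_k>0\}\bigr)f(0)$, where the correction term vanishes unless $\sum_k\lambda_k<1$ and $f(0)>0$, in which case the sum is $+\infty$. The only delicate point here is the bookkeeping of the $m=0$ contributions, which is routine.

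For step (ii), I would start from the observation that the eigenangles of the permutation matrix of $\sigma_I$ are the reals $2\pi m/\ell$, with $\ell$ ranging over the cycle lengths of $\sigma_I$ and $0\le m<\ell$, reduced to a fixed fundamental domain of $\mathbb{R}/2\pi\mathbb{Z}$; hence $\sum_{\gamma\in X_I}m_I(\gamma)f(\gamma)$ is a functional of the cycle-length statistics of $\sigma_I$ alone. Fix $f$ with $\operatorname{supp}f\subset[-R,R]$ and a generic small threshold $\delta>0$, and split the cycles of $\sigma_I$ into the at most $1/\delta$ ``large'' ones of length $\ge\delta|I|$ and the remaining ``small'' ones. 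By Proposition \ref{loup37}, along any fixed strictly increasing sequence of sets (resp. in probability), the renormalized lengths of the $K_\delta:=\#\{k:\lambda_k>\delta\}$ longest cycles converge to $\lambda_1,\dots,\lambda_{K_\delta}$; by uniform continuity of $f$ the large cycles then contribute $\sum_{k\le K_\delta}\sum_{m\in\mathbb{Z}}f(2\pi m/\lambda_k)+o(1)$ (the $m=0$ term accounting for the eigenvalue $1$ of each such cycle), while each small cycle of length $\ell<\delta|I|$ has smallest positive rescaled eigenangle $2\pi|I|/\ell>2\pi/\delta>R$, hence contributes exactly $f(0)$. Summing and letting $|I|\to\infty$ and then $\delta\to 0$, and invoking once more Proposition \ref{loup37} together with the underlying theory of exchangeable partitions --- a positive ``dust'' mass $1-\sum_k\lambda_k$ forces $\#\{\text{cycles of }\sigma_I\}\to\infty$, whereas $\sum_k\lambda_k=1$ gives $\#\{\text{cycles}\}\to\#\{k:\lambda_k>0\}\le\infty$ --- one recovers exactly $\sum_{\gamma\in X}m(\gamma)f(\gamma)$. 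The two modes of convergence required in the proposition are inherited directly from the two modes in Proposition \ref{loup37}.

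The step I expect to be the main obstacle is the uniform control of the ``intermediate'' cycles and of the multiplicity of $0$: one must show that the $f$-mass carried by cycles whose renormalized length is neither near $0$ nor near some $\lambda_k$ is uniformly negligible, and that $\#\{\text{cycles of }\sigma_I\}$ converges to $m(0)$, in particular diverges exactly when $\sum_k\lambda_k<1$. If one instead simply imports the convergence $X_I\to X_{\infty}$ from \cite{NNp} (valid in the general central setting considered there), step (ii) becomes immediate and the whole content of Proposition \ref{eigenangles} reduces to the deterministic identification carried out in step (i).
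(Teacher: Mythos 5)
Your step (i) — identifying $X$ with $X_\infty$ via Proposition \ref{eigenfunctions} — is correct and matches the paper's implicit use of that proposition. The real content, as you correctly observe, lies in step (ii), and here your sketch diverges from the paper and leaves genuine gaps that you acknowledge but do not fill.

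The paper's proof of the eigenangle convergence does not work with the ordered cycle lengths in the anonymous fashion you propose. Instead it invokes the circle representation of Propositions \ref{circles}--\ref{circles3}: every cycle of $\sigma_{I_n}$ is \emph{labelled} as either the trace of a fixed circle $C_k$ (namely $I_n\cap\mathcal{C}_k$ with $\mathcal{C}_k=\{x:X_x\in C_k\}$) or a dust singleton in $L$, and conditionally on $(\lambda_k)$ the points $X_x$ are i.i.d.\ uniform on $C$. This one device dissolves all three of the obstacles you flag. First, $|I_n\cap\mathcal{C}_k|/n\to\lambda_k$ a.s.\ is just the strong law of large numbers for Bernoulli variables — there is no need to deduce convergence of the order statistics of cycle lengths, which Proposition \ref{loup37} as stated does not directly give you (your appeal to it for ``the $K_\delta$ longest cycles'' would require an additional paintbox argument). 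Second, the number of cycles of $\sigma_{I_n}$ decomposes as (number of circles $C_k$ hit by some $X_x$, $x\in I_n$) plus (number of $x\in I_n$ with $X_x\in L$), and again the law of large numbers gives $m_{I_n}(0)\to m(0)$, including divergence to $+\infty$ when $\sum_k\lambda_k<1$. Third, there are no ``intermediate'' cycles to control: the paper's cutoff is on the index $k$ (choose $k_0$ with $\sum_{l\ge k_0}\lambda_l\le 1/(2A)$ where $\operatorname{supp}f\subset[-A,A]$), and then $f(2\pi m n/|I_n\cap\mathcal{C}_k|)>0$ forces $|I_n\cap\bigcup_{l\ge k}\mathcal{C}_l|/n\ge 1/A$, which by the LLN fails eventually for $k\ge k_0$. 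The sum is thereby reduced, for $n$ large, to a \emph{fixed finite} sum over $k<k_0$ and $|m|\le A$, which converges by continuity of $f$.

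Two further corrections. Your fallback of importing $X_I\to X_\infty$ from \cite{NNp} is not available: as the introduction explains, \cite{NNp} treats the Ewens case, not arbitrary central measures, so that step (ii) is precisely what Proposition \ref{eigenangles} needs to establish, not something that can be quoted. Also, the paper's reduction at the start of the proof (to $f=\mathds{1}_{\{0\}}$ and to continuous $f$ with $f(0)=0$, exploiting that $X$ and $X_I$ have no points in $(-2\pi,2\pi)\setminus\{0\}$) is what lets it treat the possibly infinite multiplicity at $0$ cleanly and separately; your bookkeeping of the $m=0$ terms is aiming at the same thing but is less crisp. In short: right overall shape, correct step (i), but step (ii) needs the circle representation, which is the key ingredient your proposal omits.
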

\begin{proof}
Since $X$ and $X_I$ have no point in the interval $(-2 \pi, 2 \pi)$ except zero,
 it is sufficient to prove the convergence stated in Proposition \ref{eigenangles}
for $f=\mathds{1}_{\{0\}}$ and for $f$ nonnegative, continuous, with compact support and 
such that $f(0)= 0$. Let $(I_n)_{n \geq 1}$ be an increasing sequence of
 subsets of $E$, such that $|I_n| = n$. Let us suppose that the underlying probability space 
is large enough to apply Proposition \ref{circles}, and let us take the same notation.
 The multiplicity $m_{I_n}(0)$ is equal to 
the sum of the number of indices $k \geq 1$ such that there exists $x \in I_n$ with $X_x \in C_k$, 
and the number of elements $x \in I_n$ such that $X_x \in L$. By the fact that conditionally
on $(\lambda_k)_{k \geq 1}$, 
the variables $(X_x)_{x \in E}$ are independent and uniform on $C$, a weak form of the 
law of large numbers implies that $m_{I_n}(0)$ increases almost surely to the number of indices $k \geq 1$
such that $\lambda_k > 0$ if $\sum_{k \geq 1} \lambda_k = 1$, and to infinity otherwise.
In other words, $m_{I_n}(0)$ increases almost surely to $m(0)$, and then also in 
probability. Since the law of $\sigma$ is central, the 
convergence in probability holds also for $|I|$ going to infinity
and not only along the sequence $(I_n)_{n \geq 1}$, which proves the convergence in
 Proposition \ref{eigenangles} for $f=\mathds{1}_{\{0\}}$. Let us now suppose that 
 $f$ is nonnegative, continuous, with compact support and 
satisfies $f(0)= 0$.
One has almost surely, for all $n \geq 1$,
$$\sum_{\gamma \in X_{I_n}} m(\gamma) f(\gamma)  = \sum_{m \in \mathbb{Z} \backslash \{0\} } 
\sum_{k \geq 1} f( 2 \pi mn/ |I_n \cap \mathcal{C}_k|),$$
where $\mathcal{C}_k$ denotes the set of $x \in E$ such that $X_x \in C_k$, and with 
the convention:
$$f( 2 \pi mn/ |I_n \cap \mathcal{C}_k|) := 0$$ for $|I_n \cap \mathcal{C}_k| = 0$. Since $f$
has compact support, there exists $A > 0$ such that $f(t) = 0$ for $|t| \geq A$.
Hence, the condition $f(2 \pi mn/ |I_n \cap \mathcal{C}_k|) > 0$ implies 
that $2 \pi |m| < A$ and a fortiori $|m| \leq A$, on the other hand, it implies that 
$2 \pi n/ |I_n \cap \mathcal{C}_k| < A$, $|I_n \cap \mathcal{C}_k|/n \geq 1/A$, and in particular,
\begin{equation}
 \frac{1}{n} \, \left| I_n \cap \left( \bigcup_{l \geq k} \mathcal{C}_l \right) \right| \geq 1/A. \label{AAAAA}
 \end{equation}
Now, conditionally on $(\lambda_n)_{n \geq 1}$, the left hand side of \eqref{AAAAA}
has the same law as the average of $n$ i.i.d. Bernoulli random variables, with parameter
 $\sum_{l \geq k} \lambda_l$. Hence, by law of large numbers, if 
$k_0 \geq 1$ denotes the smallest integer such that $\sum_{l \geq k_0} \lambda_l \leq 1/2A$, one has
 almost surely, for $n$ large enough,
$f(2 \pi mn/ |I_n \cap \mathcal{C}_k|) = 0$ if $k \geq k_0$, and then
$$\sum_{\gamma \in X_{I_n}} m(\gamma) f(\gamma)  = \sum_{m \in (\mathbb{Z} \backslash \{0\}) \cap [-A,A] } \sum_{1 \leq k \leq k_0} f( 2 \pi mn/ |I_n \cap \mathcal{C}_k|).$$
By the continuity of $f$ and the fact that $|I_n \cap \mathcal{C}_k|/n$ tends to $\lambda_k$ when $n$ goes to infinity, one deduces that almost surely,
$$ \sum_{\gamma \in X_{I_n}} m(\gamma) f(\gamma)  \underset{n \rightarrow \infty}{\longrightarrow} \sum_{m \in (\mathbb{Z} \backslash \{0\}) \cap [-A,A] } \sum_{1 \leq k \leq k_0}  f( 2 \pi m/\lambda_k)
= \sum_{\gamma \in X} f(\gamma),$$
which gives the almost sure convergence stated in Proposition \ref{eigenangles}. By the centrality of 
the law of $\sigma$, one then deduces the convergence in probability.

\end{proof}

  \bibliographystyle{amsplain}
 
 \providecommand{\bysame}{\leavevmode\hbox to3em{\hrulefill}\thinspace}
\providecommand{\MR}{\relax\ifhmode\unskip\space\fi MR }
\providecommand{\MRhref}[2]{%
  \href{http://www.ams.org/mathscinet-getitem?mr=#1}{#2}
}
\providecommand{\href}[2]{#2}

\end{document}